\newtheorem{ack}{Acknowledgments\!\!}
\numberwithin{equation}{subsection} 
\newtheorem{guess}{Theorem}[section]
\newtheorem{rem}[guess]{Remark}
\newtheorem{defi}[guess]{Definition}
\newtheorem{thm}[guess]{Theorem}
\newtheorem{lem}[guess]{Lemma}
\newtheorem{prop}[guess]{Proposition}
\newtheorem{Cor}[guess]{Corollary}
\newcommand{\gfr}{\mathfrak{g}}
\newcommand{\cU}{\mathcal{U}}
\newcommand{\cE}{\mathcal{E}}
\newcommand{\cR}{\mathcal{R}}
\newcommand{\cH}{\mathcal{H}}
\newcommand{\cA}{\mathcal{A}}
\newcommand{\cG}{\mathcal{G}}
\newcommand{\cP}{\mathcal{P}}
\newcommand{\hra}{\hookrightarrow}
\newcommand{\ra}{\rightarrow}
\newcommand{\ol}{\overline}
\newcommand{\RR}{\mathbb{R}}
\newcommand{\ZZ}{\mathbb{Z}}
\newcommand{\GG}{\mathbb{G}}
\newcommand{\QQ}{\mathbb{Q}}
\newcommand{\CC}{\mathbb{C}}
\newcommand{\spec}{{\rm Spec}\,}
\newcommand{\beqa}{\begin{eqnarray}}
\newcommand{\eeqa}{\end{eqnarray}}
\begin{document}

\title{On a ``Wonderful" Bruhat-Tits group scheme}

\author[Vikraman Balaji]{Vikraman Balaji}
\address{Chennai Mathematical Institute, Plot number H1, Sipcot IT Park, Siruseri, Chennai, India}
\email{balaji@cmi.ac.in}

\author[Y. Pandey]{Yashonidhi Pandey}
\thanks{The support of Science and Engineering Research Board under Mathematical Research Impact Centric Support File number: MTR/2017/000229 is gratefully acknowledged.}
\address{ 
Indian Institute of Science Education and Research, Mohali Knowledge city, Sector 81, SAS Nagar, Manauli PO 140306, India}
\email{ ypandey@iisermohali.ac.in, yashonidhipandey@yahoo.co.uk}

\begin{abstract} In this paper we make a universal construction of Bruhat-Tits group scheme on wonderful embeddings of adjoint groups in the absolute and relative settings. We make a similar construction for the wonderful embeddings of adjoint Kac-Moody groups. These have natural classifying properties reflecting the orbit structure on the wonderful embeddings. 

\end{abstract}

\subjclass[2000]{14L15,14M27,14D20}
\keywords{Bruhat-Tits group scheme, parahoric group, Loop groups, Wonderful compactification}
\maketitle
\small
\tableofcontents
\normalsize

\input{amssym.def}

\section{Introduction}
Let $G$ be an almost simple, simply-connected  group over an algebraically closed field $k$ of characteristic zero and let $G_{_{_{\text{ad}}}} := G/Z(G)$. The aim of this note is to construct  certain {\sl universal group schemes} on 
\begin{itemize}
\item the De Concini-Procesi wonderful compactification \cite{decp} $\bf X$ of $G_{_{_{\text{ad}}}}$, 
\item the loop ``wonderful embedding" ${\bf X}^{^{\text{aff}}}$ of the adjoint affine Kac-Moody group $G^{^{\text{aff}}}_{_{\text{ad}}}$, constructed by Solis \cite{solis},
\item certain toroidal embeddings $\bar{G}_{_{_{\text{ad,A}}}}$ of the relative group scheme $G_{_{_{\text{ad,A}}}}$ modeled after \cite{kkms}.
\end{itemize} 

The group schemes we construct are sufficiently universal to be called ``wonderful". A new point of view, playing a central role in this work, is that {\sl parabolic vector bundle} on {\sl logarithmic schemes} can be used as a {\it tool} to make geometric constructions. Hitherto, they have been  {\it objects} of study occuring as points in certain moduli spaces.

Let us briefly motivate the theme of this note.

\subsection{Group embeddings and buildings} 
Let us recall that Tits' buildings are basically of two types. The first one is the ``absolute" Tits building or spherical building which is attached to a semi-simple group over a general field. This simplicial complex is built from simplices which correspond to parabolic subgroups. The apartments of the building correspond to parabolic subgroups containing a fixed maximal torus. This is built up out of Euclidean spaces decomposed by the usual Weyl chambers. The second one is the Bruhat-Tits building which is the ``relative" building attached to a semi-simple group over a complete-valued field. This is based on its {\em parahoric subgroups}  and built up out of Euclidean spaces decomposed into affine Weyl chambers.

The two types of buildings can also be seen from an algebro-geometric perspective. In the absolute case, we work with a semisimple group $G_{_{\text{ad}}}$   of adjoint type. In this setting one has the  wonderful embedding $G_{_{\text{ad}}} \subset {\bf X}$
where $G_{_{\text{ad}}}$ sits as an open dense subset of $\bf X$.
The complement ${\bf X} \setminus G_{_{\text{ad}}}$ is stratified by subsets $Y$ and there is a bijection $Y \mapsto \{P_{_Y}| B \subset P_{_Y} \}$ from these strata to parabolic subgroups $P_{_Y} \subset G$ containing a fixed Borel subgroup $B$. Furthermore, this bijection extends to an isomorphism between the Tits building and the canonical complex associated with the toroidal embedding $G_{_{\text{ad}}} \subset {\bf X}$ (see Mumford \cite[Page 178]{kkms}). 

A second perspective is when  the ground field is endowed with a complete non-archimedean discrete valuation. Let $A = k\llbracket z \rrbracket$ be a complete discrete valuation ring, $K = k(\!(z)\!)$ its quotient field. In this setting our basic model was constructed by Mumford \cite{kkms}. He constructs  a toroidal embedding $G_{_{\text{ad, A}}} \subset \bar{G}_{_{\text{ad, A}}}$ of the split group scheme $G_{_{\text{ad, A}}} = G_{_{\text{ad, A}}} \times \spec A$. The strata of $\bar{G}_{_{\text{ad, A}}}\setminus  G_{_{\text{ad, A}}}$ correspond bijectively to parahoric subgroups of $G(K)$ in a way that naturally extends to an isomorphism of the graph of the embedding $G_{_{\text{ad, A}}} \subset \bar{G}_{_{\text{ad, A}}}$ with the Bruhat-Tits building of $G \times \spec A$ over $A$. 


\subsection{Statement of main results} Classical Bruhat-Tits theory associates, to each facet $\Sigma$ of the Bruhat-Tits building, a smooth group scheme $\cG_{_{\Sigma}}$ on $\spec~A$, with connected fibres and whose generic fibre is $G \times _{_{\spec k}} \spec~K$. We call $\cG_{_{\Sigma}}$ a Bruhat-Tits group scheme on $\spec A$. The $A$-valued points $\cG_{_{\Sigma}}(A) \subset G(K)$ are precisely the parahoric subgroups of $G(K)$. In this paper we construct universal analogues of the Bruhat-Tits group scheme.

\subsubsection{The case of Tits building}  In the first setting, namely in the case of the Tits building, we construct an affine group scheme $\cG_{_{\bf X}}$ over $\bf X$ whose restriction along each curve transversal to a strata of ${\bf X} \setminus G_{_{\text{ad}}}$ corresponds to the Bruhat-Tits group scheme associated to the parabolic subgroup under the bijection $Y \mapsto \{P_{_Y}| B \subset P_{_Y} \}$ mentioned above. 

To state our theorem, we introduce some relevant notations and notions. Let ${\bf X} := \ol{G_{_{_{\text{ad}}}}}$ be the wonderful compactification of $G_{_{_{\text{ad}}}}$. We construct a locally free sheaf of Lie algebras on $\bf X$. This construction is essentially toric in the sense that it is first constructed on the toric varieties based on the negative Weyl chambers and then the ones on the bigger spaces is deduced from this construction.  We endow the locally free sheaf $\mathcal R$ with a canonical parabolic structure at the generic points of the divisor ${\bf X} \setminus G_{_{_{\text{ad}}}}$ together with a compatible loop Lie algebra structure. 

We fix data $(T,B,G)$ of $G$. Let $S$ denote the set of simple roots of  $G$ and $\mathbb S=S \cup \{\alpha_0\}$ denote the set of affine simple roots. For $\emptyset \neq \mathbb I \subset \mathbb S$ let $\mathcal{G}_{\mathbb I}$ denote the associated Bruhat-Tits group scheme on a dvr and for any $I \subset S$ let $\mathcal{G}^{^{st}}_{I}=\mathcal{G}_{\mathbb I}$ where $\mathbb I =I \cup \{\alpha_0\}$.
For ${I} \subset S$, let $Z_{I}$ denote the corresponding strata of $\bf X$ and for any point $z_{_{I}} \in Z_{_{I}}$, let $C_{_{I}} \subset {\bf X}$ be a smooth curve with generic point in $G_{_{\text{ad}}}$ and closed point $z_{_{I}}$. Let $U_{_{z}} \subset C_{_{I}}$ be a formal neighbourhood of  $z_{_{I}}$.

We these notations,  Theorem \ref{btoverx} is as follows.
\begin{thm}\label{btoverx00}  There exists an affine ``wonderful" Bruhat-Tits group scheme ${\mathcal G}_{_{\bf X}}^{^{\varpi}}$ on $\bf X$ satisfying the following classifying properties. 
\begin{enumerate} 
\item There is an identification of the Lie-algebra bundles $\text{Lie}({\mathcal G}_{_{\bf X}}^{^{\varpi}}) \simeq \mathcal R$.

\item For $\emptyset \neq I \subset S$ the restriction of ${\mathcal G}_{_{{\bf X}}}^{^{\varpi}}$ to  the formal neighbourhood $U_{_{z_{_I}}}$ of $z_{_I}$ in $C_{_I}$ \S \eqref{constructionR} is isomorphic to the standard Bruhat-Tits group scheme $\cG^{^{st}}_{_{I}}$ \S \eqref{sbtgpsch}.
\end{enumerate} 
\end{thm}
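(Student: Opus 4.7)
The plan is to promote the Lie algebra sheaf $\mathcal{R}$ to a group scheme via a local-to-global construction that mirrors the one already used to build $\mathcal{R}$ itself, and then to read both classifying properties off the local model.

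First, on the toric open chart corresponding to the negative Weyl chamber, the boundary ${\bf X} \setminus G_{_{\text{ad}}}$ is a normal crossings divisor whose components are indexed by simple roots $\alpha \in S$, and each stratum $Z_{I}$ is cut out by the simultaneous vanishing of the coordinates for $\alpha \in I$. Along a formal disk transverse to one such component, the parabolic structure together with the compatible loop Lie algebra structure on $\mathcal{R}$ singles out a parahoric Lie subalgebra of $\gfr \otimes k(\!(z)\!)$. I would then perform an iterated N\'eron-style dilatation of the constant group scheme $G \times U$ (with $U$ the toric chart) along the boundary components, the ideal of dilatation at each step being prescribed by the parabolic filtration of $\mathcal{R}$. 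The output is a smooth affine group scheme on $U$ whose restriction to a transverse formal disk through any $z_{I}$ coincides, by construction, with the standard Bruhat-Tits group scheme $\mathcal{G}^{^{st}}_{I}$.

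Second, I would extend to all of ${\bf X}$ using the $G \times G$-equivariance of the wonderful compactification. Since ${\bf X}$ is covered by translates of the toric chart and $\mathcal{R}$ was itself obtained by equivariant descent from the toric model, the local group schemes glue by the same cocycle. Consistency holds because dilatation is functorial for smooth morphisms of pointed group schemes, and the glueing automorphisms, being restrictions of inner automorphisms of $G$, preserve the parabolic structure. Affineness of $\mathcal{G}_{\bf X}^{\varpi}$ follows from affineness of each chart together with the fact that the cocycle takes values in the affine group $G$. Properties (1) and (2) are then essentially tautological: the Lie functor commutes with dilatation, yielding (1), while (2) is the restriction computation already built into the first step.

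The principal obstacle is the well-definedness of the iterated dilatation at deep strata $|I| \geq 2$, where several boundary components meet transversally. One must show that the resulting local group scheme is independent of the order in which the simple roots of $I$ are used, and that at the end one obtains a smooth group scheme with connected fibres of the expected parahoric type. This reduces to a compatibility between the parabolic filtration of $\mathcal{R}$ along $Z_{I}$ and the chain of parahoric inclusions $\mathcal{G}^{^{st}}_{I} \subset \mathcal{G}^{^{st}}_{J}$ for $J \subsetneq I$. Verifying this compatibility is precisely where the loop Lie algebra structure on $\mathcal{R}$ enters in an essential way, and it is the step I expect to carry the main technical weight of the argument.
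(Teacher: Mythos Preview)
Your approach via iterated N\'eron dilatations is genuinely different from the paper's, and the difference is precisely at the point you flag as the principal obstacle. The paper does \emph{not} construct $\mathcal{G}_{\bf X}^{\varpi}$ by dilating the constant group scheme along the boundary components. Instead, it passes to a global Kawamata cover $p:Z \to {\bf X}$ branched along $D$ with the indices $d_\alpha$, on which the parabolic bundle $\mathcal{R}$ corresponds (via the parabolic/equivariant dictionary recalled in the appendix) to an honest $\Gamma$-equivariant vector bundle $V$. On the preimage $Z'$ of the codimension-$\leq 1$ locus $X'$ one already has an equivariant group scheme $\mathcal{H}^\circ$ with fibres $G$ and $\mathrm{Lie}(\mathcal{H}^\circ)=V|_{Z'}$. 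The Lie bracket then extends from $V|_{Z'}$ to all of $V$ because $V$ is locally free and $Z\setminus Z'$ has codimension $\geq 2$; rigidity of semisimple Lie algebras forces every fibre of $V$ to be isomorphic to $\mathfrak g$, so the transition functions of $V$ land in $\mathrm{Aut}(\mathfrak g)=\mathrm{Aut}(G)$ and define an $\mathrm{Aut}(G)$-torsor $\mathcal{E}$ on all of $Z$. One then sets $\mathcal{G}_{\bf X}^{\varpi}:=\bigl(\mathrm{Res}_{Z/{\bf X}}(\mathcal{E}\times^{\mathrm{Aut}(G)}G)\bigr)^{\Gamma}$, and the classifying property at the deeper strata is checked by pulling the Kawamata cover back to the transverse curve $C_I$ and comparing with the local $(\Gamma_\lambda,G)$-torsor description of $\mathcal{G}^{st}_{\theta_\lambda}$ from \cite{base}.

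The upshot is that the codimension-$\geq 2$ extension you identify as the hard step becomes, on the cover, a Hartogs-type extension of a bilinear form on a locally free sheaf --- essentially automatic --- together with semisimple rigidity. Your dilatation route would have the virtue of being more direct and of avoiding Kawamata covers (hence potentially weakening the characteristic hypotheses), but as it stands you have deferred exactly the step that carries the weight: you would need to prove that the dilatations along the different $H_\alpha$ commute at a normal-crossings point and that the result over the deepest stratum is smooth with connected fibres isomorphic to the correct parahoric. That is not obviously a consequence of the parabolic filtration of $\mathcal{R}$ alone, and it is the problem the paper's method is designed to sidestep. A minor point: the gluing cocycle for $\mathcal{R}$ is treated in the paper as $\mathrm{Aut}(\mathfrak g)=\mathrm{Aut}(G)$-valued rather than inner, which is why $\mathrm{Aut}(G)$-torsors (not $G$-torsors) appear in the construction.
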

In \eqref{pabsolute}, we have indicated generalizations to fields of positive characteristics.

\subsubsection{The relative case of the Bruhat-Tits buiding} In the second scenario we  work in the setting of loop groups and construct an affine group scheme over a ``wonderful" embedding ${\bf X}^{^{\text{aff}}}$ constructed by Solis \cite{solis}. In the relative case the group scheme is obtained by ``integrating" a locally free sheaf of Lie algebras $\bf R$ on ${\bf X}^{^{\text{aff}}}$.  Its construction is achieved by constructing a locally free sheaf of Lie-algebras $J$ on a finite dimensional scheme ${\bf Y}^{^{\text{aff}}}$ which is the closure of a torus-embedding and whose translates build up the ind-scheme ${\bf X}^{^{\text{aff}}}$. The sheaf $J$ comes equipped with a canonical parabolic structure on a normal crossing divisor. The sheaf $J$  plays the role analogous to $\mathcal{R}$ (on $\bf X$) once we view ${\bf Y}^{^{\text{aff}}}$ as built out of affine Weyl group translates of the affine space $\mathbb{A}^{\ell+1}$ whose standard coordinate hyperplanes play the role of strata of largest dimension of $\bf X$. Using this perspective, we then briefly consider a toroidal embedding $\bar{G}_{_{ad, A}}$ the structure of which is modeled after Mumford's construction in \cite{kkms}. We then define an affine group scheme over $\bar{G}_{_{ad, A}}$ which has properties analogous to those of $\cG_{_{\bf X}}$.  

We first introduce some notation to state our theorem more precisely. Let $LG$ denote the loop group of $G$. Let $L^{\ltimes}G= \GG_m \ltimes LG$  where the {\it rotational torus} $\GG_m$ acts on $LG$ by acting on the uniformizer (cf. \S \ref{loopcase}). Let $G^{^{\text{aff}}}$ denote the Kac-Moody group associated to the affine Dynkin diagram of $G$. Recall that $G^{^{\text{aff}}}$ is given by a  central extension of $L^{\ltimes}G$ by $\GG_m$. 

Let $T_{_{\text{ad}}} := T/Z(G)$ and let us denote by $T^{\ltimes}_{_{\text{ad}}}$ the torus $ \GG_m \times T_{_{\text{ad}}} \subset G^{^{\text{aff}}}_{_{\text{ad}}}$. In ${\bf X}^{^{\text{aff}}}$, the closure ${\bf Y}^{^{\text{aff}}}:=\ol{T^{\ltimes}_{_{\text{ad}}}}$ gives  a  torus-embedding. The complement $Z:= {\bf Y}^{^{\text{aff}}} \setminus T^{\ltimes}_{_{\text{ad}}}$ is a union $\cup_{_{\alpha \in \mathbb S}} H_{_\alpha}$ of $\ell + 1$ smooth divisors meeting at normal crossings.  For $\alpha \in \mathbb S$, let $\xi_{_\alpha} \in H_{_\alpha}$ denote the generic points of the divisors $H_{_\alpha}$'s. Let  $A_{_\alpha}= \mathcal O_{_{{\bf Y}^{^{\text{aff}}}, \xi_{_{\alpha}}}}$
be the dvr's obtained by localizing at the height $1$-primes given by the $\xi_{_\alpha}$'s.  Let $Y_{_\alpha}:= \spec(A_{_\alpha}) $. In Theorem \ref{liealgbunonyaff} we show the existence of a finite dimensional Lie-algebra bundle $J$ on ${\bf Y}^{^{\text{aff}}}$ which extends the trivial bundle with fiber $ \mathfrak g$ on the open dense subset ${T^{\ltimes}_{_{\text{ad}}}}  \cap {\bf Y}^{^{\text{aff}}} \subset {\bf Y}^{^{\text{aff}}}$ and for each $\alpha \in \mathbb S$ we have $L^+(J_{_{Y_{_\alpha}}}) \simeq  L^+({\text{Lie}}(\mathcal{G}_{_\alpha}))$. Then in Proposition \ref{isotliealginfty1} we show the existence of a finite dimensional Lie-algebra bundle $\bf R$ on ${\bf X}^{^{\text{aff}}}$ which extends the trivial Lie algebra bundle $G_{_{\text{ad}}}^{^{\text{aff}}} \times \mathfrak g$ on the open dense subset $G_{_{\text{ad}}}^{^{\text{aff}}} \subset {\bf X}^{^{\text{aff}}}$ and whose restriction to ${\bf Y}^{^{\text{aff}}}$ is $J$. 

The ind-scheme ${\bf X}^{^{\text{aff}}}$ has divisors $D_\alpha$ for $\alpha \in \mathbb S$ such that the complement of their union is ${\bf X}^{^{\text{aff}}} \setminus G_{ad}^{aff}$. 
With these notations, let us state  Theorem \ref{gpshsolis}.
\begin{thm} There exists an affine ``wonderful" Bruhat-Tits group scheme ${\mathcal G}_{_{{\bf X}^{^{\text{aff}}}}}^{^{\varpi}}$ on ${\bf X}^{^{\text{aff}}}$ together with a canonical isomorphism $\text{Lie}({\mathcal G}_{_{\bf X^{aff}}}^{^{\varpi}}) \simeq {\bf R}$. It further satisfies the following classifying property: 

For $h \in {\bf X}^{^{\text{aff}}} \setminus G_{ad}^{aff}$, let 
$ \mathbb I \subset \mathbb S$ be the subset such that $h \in \cap_{\alpha \in I} D_\alpha$. Let $C_{_{\mathbb I}} \subset {\bf X}^{^{\text{aff}}}$ be a smooth curve with generic point in $G^{aff}_{_{\text{ad}}}$ and closed point $h$. Let $U_{_{h}} \subset C_{_{\mathbb I}}$ be a formal neighbourhood of the closed point $h$. Then, the restriction ${\mathcal G}_{_{{\bf X}^{^{\text{aff}}}}}^{^{\varpi}}|_{_{U_{_{h}}}}$ is isomorphic to the Bruhat-Tits group scheme $\cG_{_{\mathbb I}}$ \S \eqref{btgpsch} on $U_{_{h}}$. 
\end{thm}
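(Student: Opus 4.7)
The plan is to follow the strategy of Theorem \ref{btoverx00}, integrating the Lie algebra bundle ${\bf R}$ (produced in Proposition \ref{isotliealginfty1}) into an affine group scheme over ${\bf X}^{^{\text{aff}}}$. The construction is local-to-global: first build a group scheme on the torus-closure ${\bf Y}^{^{\text{aff}}}$ by integrating $J$ equipped with its canonical parabolic structure on the normal crossings divisor $Z = \cup_{_{\alpha \in \mathbb S}} H_{_\alpha}$; then propagate this to the ind-scheme ${\bf X}^{^{\text{aff}}}$ using the action of $G^{^{\text{aff}}}_{_{\text{ad}}}$ that already underlies the construction of ${\bf R}$ from $J$ in Proposition \ref{isotliealginfty1}.

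For the local step, the restriction of $J$ along each $Y_{_\alpha}$ satisfies $L^+(J_{_{Y_{_\alpha}}}) \simeq L^+(\text{Lie}(\mathcal{G}_{_\alpha}))$ by Theorem \ref{liealgbunonyaff}. Since $\mathcal{G}_{_\alpha}$ is characterized as the unique smooth affine group scheme over $A_{_\alpha}$ with connected fibres, generic fibre $G \times \spec k(\!(z)\!)$, and the prescribed Lie algebra, this datum pins down the group scheme at the generic points of each $H_{_\alpha}$. A Beauville--Laszlo-type gluing along $Z$ upgrades this to a group scheme $\mathcal{G}^{^{\varpi}}_{_{{\bf Y}^{^{\text{aff}}}}}$ on all of ${\bf Y}^{^{\text{aff}}}$; compatibility along intersections $\cap_{_{\alpha \in \mathbb I}} H_{_\alpha}$ is automatic because the parahoric $\mathcal{G}_{_{\mathbb I}}$ is, schematically, the intersection of the $\mathcal{G}_{_\alpha}$'s for $\alpha \in \mathbb I$, matching the normal-crossings geometry of $Z$.

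To globalize to ${\bf X}^{^{\text{aff}}}$, I would invoke Solis's description by which ${\bf X}^{^{\text{aff}}}$ is swept out by translates of a neighborhood of ${\bf Y}^{^{\text{aff}}}$ under the two-sided action of $G^{^{\text{aff}}}_{_{\text{ad}}}$. Because the Lie algebra bundle ${\bf R}$ is constructed in Proposition \ref{isotliealginfty1} precisely as the equivariant extension of $J$, the integration procedure, being functorial in the Lie-algebra-with-parabolic-structure input, glues the translates of $\mathcal{G}^{^{\varpi}}_{_{{\bf Y}^{^{\text{aff}}}}}$ into a group scheme $\mathcal{G}^{^{\varpi}}_{_{{\bf X}^{^{\text{aff}}}}}$ over ${\bf X}^{^{\text{aff}}}$. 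The canonical isomorphism $\text{Lie}(\mathcal{G}^{^{\varpi}}_{_{{\bf X}^{^{\text{aff}}}}}) \simeq {\bf R}$ is then tautological from this construction.

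For the classifying property at $h \in \cap_{_{\alpha \in \mathbb I}} D_{_\alpha}$, I would translate $h$ via the $G^{^{\text{aff}}}_{_{\text{ad}}}$-action to a point of ${\bf Y}^{^{\text{aff}}}$ lying on $\cap_{_{\alpha \in \mathbb I}} H_{_\alpha}$; any transversal curve $C_{_{\mathbb I}}$ through $h$ maps to a transversal curve to the corresponding stratum in ${\bf Y}^{^{\text{aff}}}$, and the restriction to the formal neighborhood $U_{_h}$ then becomes $\mathcal{G}_{_{\mathbb I}}$ by the local construction. The main obstacle I expect is verifying that the integration of a parabolic Lie algebra bundle to a smooth group scheme, straightforward in the finite-dimensional setting of Theorem \ref{btoverx00}, survives the ind-scheme context of ${\bf X}^{^{\text{aff}}}$: one must confirm that the construction commutes with the direct limit defining ${\bf X}^{^{\text{aff}}}$ and that the resulting group scheme has connected fibres and the expected parahoric structure at the generic points of the divisors $D_{_\alpha}$. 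Once settled stratum-by-stratum, the higher-codimension compatibility follows from the normal-crossings geometry of $\cup_\alpha D_{_\alpha}$ and the intersection description of the parahorics $\mathcal{G}_{_{\mathbb I}}$.
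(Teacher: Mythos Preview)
Your overall architecture---first integrate $J$ on ${\bf Y}^{^{\text{aff}}}$, then propagate to ${\bf X}^{^{\text{aff}}}$ via the $G^{^{\text{aff}}}_{_{\text{ad}}}\times G^{^{\text{aff}}}_{_{\text{ad}}}$-action---matches the paper's, and your reduction of the classifying property to the torus-closure is also what the paper does. The substantive gap is in the step you pass over most quickly: extending the group scheme from the open set $Y' = T^{\ltimes}_{_{\text{ad}}} \cup_{_\alpha} Y_{_\alpha}$ across the codimension~$\geq 2$ locus of ${\bf Y}^{^{\text{aff}}}$. A Beauville--Laszlo argument, together with the characterization of $\cG_{_\alpha}$ by its Lie algebra, produces $\cG^{\circ}$ only on $Y'$. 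Your claim that compatibility at $\cap_{_{\alpha\in\mathbb I}}H_{_\alpha}$ is ``automatic'' because $\cG_{_{\mathbb I}}$ is the schematic intersection of the $\cG_{_\alpha}$'s is a statement about $A$-points inside $G(K)$ over a \emph{dvr}; it does not by itself furnish a smooth affine group scheme over the two-(or higher-)dimensional local ring at a deeper stratum, nor does it tell you that what you get there has the right restriction to transversal curves.

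The paper's mechanism for this is different and is the real content. On each affine chart $Y_{_w}\simeq\mathbb A^{\ell+1}$ (quasi-projective, so Kawamata's covering lemma applies) one takes a ramified Galois cover $p_{_w}\colon Z_{_w}\to Y_{_w}$ with the prescribed indices $d_{_\alpha}$. The parabolic structure on $J|_{_{Y_{_w}}}$ corresponds to a $\Gamma_{_w}$-equivariant vector bundle $V$ on $Z_{_w}$; because the Lie bracket extends with nondegenerate Killing form, $V$ is a bundle of Lie algebras with fibres $\mathfrak g$ on \emph{all} of $Z_{_w}$, and since $\text{Aut}(\mathfrak g)=\text{Aut}(G)$ its transition functions define an equivariant $\text{Aut}(G)$-torsor $\cE$ and hence a group scheme $\cH_{_w}=\cE\times^{^{\text{Aut}(G)}}G$ with fibres $G$ on all of $Z_{_w}$. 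One then sets $\cG_{_w}:=\big(\text{Res}_{_{Z_{_w}/Y_{_w}}}\cH_{_w}\big)^{^{\Gamma_{_w}}}$. The codimension~$\geq 2$ extension is thus achieved on the cover, where the problem becomes the extension of an $\text{Aut}(G)$-torsor governed by a locally free Lie-algebra sheaf---not a direct extension of a non-constant group scheme on the base. Gluing the $\cG_{_w}$'s (and then the $\cG_{_{g,w}}$'s on ${\bf X}_{_{g,w}}$) is done by passing to common Kawamata covers over the overlaps and using that the equivariant Lie-algebra bundles agree there; the identification $\text{Lie}(\cG^{^{\varpi}})\simeq J$ (resp.\ ${\bf R}$) and the classifying property then follow as in the finite-dimensional case. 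The ind-scheme issue you flag is not the obstacle; the paper simply works chartwise on the honest schemes ${\bf X}_{_{g,w}}$ and glues.
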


\begin{ack}{We thank J. Heinloth and M.Brion for their questions and comments. They  have helped us very much in expressing our results with greater precision. 
}\end{ack}
\section{Preliminaries}
\subsubsection{Lie-data of $G/k$} Let $G$ be an almost simple, simply-connected  group over $k$ (see \eqref{pabsolute}) with the data $(T,B,G)$. Let $X(T)\,=\,\text{Hom}(T, \GG_{_m})$ be the group of characters of $T$ and  $Y(T)\,=\, \text{Hom}(\GG_m, T)$ be the group of all one--parameter subgroups of $T$. 
Let $G_{_{_{\text{ad}}}} := G/Z(G)$ and let $\gfr$ denote the Lie-algebra. We denote $\Phi^{+},\Phi^{-} \subset \Phi$  the set of positive and negative roots with respect to $B$. Let $S= \{\alpha_{_{1}}, \ldots, \alpha_{_{\ell}}\}$ denote the set of simple roots of $G$, where $\ell$ is the rank of $G$.  Let $\alpha^{\vee}$ denote the  coroot  corresponding to $\alpha \in S$ . 

\subsubsection{Apartment data} Let  $\mathbb S = S \cup \{ \alpha_0 \}$ denote the set of affine simple roots. Let $\cA_T$ denote the affine apartment corresponding to $T$. It can be identified with the  affine space ${\mathbb E}=Y(T) \otimes_{\ZZ} \RR$ together with its origin $0$. 

Let $\mathbf{a}_0$ be the unique Weyl alcove of $G$ whose closure contains $0$ and which is contained in the dominant Weyl chamber corresponding to $B$.    Under the natural pairing between $Y(T) \otimes_{\ZZ} \QQ$ and $X(T) \otimes_\ZZ \QQ$, the integral basis elements dual to $S$ are called the fundamental co-weights $\{\omega^{\vee}_{\alpha}| \alpha \in S\}$. Let $c_{\alpha}$ be the coefficient of $\alpha$ in the highest root. The vertices of the Weyl alcove $\mathbf{a}_0$ are indexed by $0$ and 
\beqa\label{alcovevertices}
\theta_{_{\alpha}} := {{\omega^{\vee}_{\alpha}} \over {c_{\alpha}}}, \alpha \in S.
\eeqa
Indeed, any rational point $\theta \in \mathbf{a}_0$ can be expressed as $\theta_{_\lambda}$ so that there is a unique pair  $(d, \lambda) \in \mathbb{N} \times Y(T)$ defined by the condition that $d$ is the least positive integer such that 
\begin{equation} \label{dtheta}
\lambda = d. \theta_{_\lambda} \in Y(T).
\end{equation}
Thus, if $e_\alpha$ is the order of $\omega^\vee_\alpha$ in the quotient of the co-weight lattice by $Y(T)$, it follows that for $\theta_{_\alpha}$, the number $d$ is: 
\begin{equation} \label{dalpha} d_{_\alpha} := e_\alpha .c_{\alpha}.
\end{equation}

An affine simple root $\alpha \in \mathbb S$ may be viewed as an affine functional on $\cA_T$. Any non-empty subset $\mathbb I \subset \mathbb S$ defines the facet $\Sigma_\mathbb I \subset \ol{\mathbf{a}_0}$ where exactly the $\alpha$'s not lying in  $\mathbb I$ vanish. So $\mathbb S$ corresponds to the interior of the alcove and the vertices of the alcove correspond to $\alpha \in \mathbb S$. Conversely any facet $\Sigma \subset \ol{\mathbf{a}_0}$ defines non-empty subset $\mathbb I \subset \mathbb S$. For $\emptyset \neq \mathbb I \subset \mathbb S$, the barycenter of $\Sigma_\mathbb I$ is given by  
 \begin{equation} \label{Ibarycenter}
 \theta_\mathbb I:= \frac{1}{|\mathbb I|} \sum_{\alpha \in \mathbb I} \theta_{_\alpha}.
 \end{equation}
 
\subsubsection{Loop groups and their parahoric subgroups}  \label{loopgpsec}
Let $k=\mathbb C$, $A=k\llbracket z \rrbracket$ and let $R$ denote an arbitrary $k$-algebra. We denote the field of Laurent polynomials by $K=k(\!(z)\!)= k\llbracket z \rrbracket[z^{-1}]$. The loop group $LG$ on a $k$-algebra $R$ is given by $G(R(\!(t)\!))$. Similarly the loop Lie algebra $L \gfr$ is given by $L \gfr(R)=\gfr(R(\!(t)\!))$. We can similarly define the positive loops (also called {\it jet groups} in the literature) 
$L^+(G)$ to be the subfunctor of $LG$ defined by $L^+(G)(R) := G(R\llbracket z \rrbracket)$. The positive loop construction extends more generally to any group scheme $\mathcal{G} \rightarrow \spec(A)$ and any vector bundle $\mathfrak{P} \rightarrow \spec(A)$ whose sheaf of sections carries a Lie-bracket.

Let $L^{\ltimes}G= \GG_m \ltimes LG$  where the {\it rotational torus} $\GG_m$ acts on $LG$ by acting on the uniformizer via the {\it loop  rotation action} as follows: $u \in \GG_m(R)$ acts on $\gamma(z) \in LG(R)=G(R(\!(t)\!))$ by $u\gamma(z)u^{-1}=\gamma(uz)$. A maximal torus of $L^{\ltimes}G$ is $T^{\ltimes}=\GG_m \times T$.  A $\eta \in Hom(\GG_m,T^{\ltimes}) \otimes_{\ZZ} \QQ$ over $R(s)$ can be viewed as a rational $1$-PS, i.e., a $1$-PS $\GG_m \ra T^{\ltimes}$ over $R(w)$ where $w^n=s$ for some $n \geq 1$. In this case for $\gamma(z) \in L^{\ltimes}G(R)$ we will view $\eta(s) \gamma(z) \eta(s)^{-1}$ as an element in $L^{\ltimes} G(R(w))$. Hence, by the condition on $\gamma(z) \in L^{\ltimes}G(R)$  {\begin{center} {``$\lim_{_{s \ra 0}} \eta(s) \gamma(z) \eta(s)^{-1}$ exists in $L^{\ltimes}G(R)$",}\end{center}} 
\noindent
we mean that  there exists a $n \geq 1$ such that for $w^n=s$, we have \begin{equation} \eta(s) \gamma(z) \eta(s)^{-1} \in L^{\ltimes}G(R \llbracket w \rrbracket ).
\end{equation}
If $\eta=(a,\theta)$ for $a \in \mathbb{Q}$ and $\theta$ a rational $1$-PS of $T$, then we have
\begin{equation} \label{loopconjugation}
\eta(s) \gamma(z) \eta(s)^{-1}=\theta(s) \gamma(s^{a}z) \theta(s)^{-1}.
\end{equation}
We note further that for any $0<d \in \mathbb{N}$, setting $\eta=(\frac{a}{d}, \frac{\theta}{d})$ we have \begin{equation}
\eta(s) \gamma(z) \eta(s)^{-1}=\theta(s^{\frac{1}{d}}) \gamma(s^{\frac{a}{d}}z) \theta(s^{\frac{1}{d}})^{-1}.
\end{equation}
So for $\eta=\frac{1}{d}(1,\theta)$ observe that the statement "$\lim_{s \rightarrow 0} \eta(s) \gamma(z) \eta(s)^{-1}$ exists " is a condition which is equivalent to  
\begin{equation}\label{observation} \theta(s) \gamma(s) \theta(s)^{-1} \in L^{\ltimes}G(R \llbracket w \rrbracket ).
\end{equation} 
In other words, the condition of existence of limits is independent of $d$,  and we may further set $z=s$ in $\gamma(z)$. More generally, if $a>0$ the condition for $(a,\theta)$ and $(1,\frac{\theta}{a})$ are equivalent. We may write this condition on $\gamma(s) \in L^{\ltimes}G(R)$ or $ LG(R)$ as 
\begin{equation} \label{meaning} "\lim_{s \rightarrow 0} \theta(s) \gamma(s) \theta(s)^{-1} \quad \text{ exists in} \quad L^{\ltimes}G(R) \quad \text{or} \quad LG(R) \quad \text{if} \quad \gamma(z) \in LG(R)"
\end{equation}
For $r \in \Phi$, let $u_r: \mathbb{G}_a \rightarrow G$ denote the root subgroup. If for some $b \in \mathbb{Z}$ and $t(z) \in R\llbracket z \rrbracket$ we take $\gamma(z):=u_r(z^{b} t(z)) \in Lu_r(R)$ , and $\eta:=(1,\theta)$, then 
\begin{equation}
\eta(s) u_r(z^{b} t(z)) \eta(s)^{-1}=\theta(s) u_r((sz)^{b} t(sz)) \theta(s)^{-1}=u_r( s^{r(\theta)} (sz)^{b} t(sz)). 
\end{equation}
So for $\eta=(1,\theta)$ the condition that the limit exists is equivalent to 
\begin{equation} \label{floorcondition} r(\theta)+b \geq 0 \iff \lfloor r(\theta)+ b \rfloor = \lfloor r(\theta) \rfloor + b \geq 0 \iff b \geq -\lfloor r(\theta) \rfloor.
\end{equation}
We note the independence of the above implications on the number $d$ occurring in the equation $\eta:=\frac{1}{d}(1,\theta)$.

Let $\pi_{_1}: T^{\ltimes} \ra \GG_m$ be the first projection. For any rational $1$-PS $\eta: \GG_m \ra T^{\ltimes}$ we say $\eta$ is positive if $\pi_{_1} \circ \eta >0$, negative if $\pi_{_1} \circ \eta<0$ and non-zero if it is either positive or negative. 

Any non-zero $\eta=(a,\theta)$ defines the following positive-loop functors from the category of $k$-algebras to the category of groups and Lie-algebras:
 \begin{eqnarray} \label{parlimstozero}
 {\cP}_{_\eta}^{\ltimes}(R):= \{ \gamma \in L^{\ltimes}G(R) | \quad \lim_{s \ra 0} \eta(s) \gamma(z) \eta(s)^{-1} \quad \text{exists in} \quad L^{\ltimes}G(R) \}, \\
 \label{liealglimstozero}
 {\mathfrak{P}}_{_\eta}^{\ltimes}(R):= \{ h \in L^{\ltimes} \gfr(R) | \quad \lim_{s \ra 0} {Ad}(\eta(s))(h(z)) \quad \text{exists in} \quad L^{\ltimes} \gfr(R) \}, \\
 \label{liealglimstozero1}
 {\cP}_{_\eta}(R):= \{ \gamma \in LG(R) | \quad \lim_{s \ra 0} \eta(s) \gamma(z) \eta(s)^{-1} \quad \text{exists in} \quad LG(R) \}, \\
 \label{paraLiealg}
 {\mathfrak{P}}_{_\eta}(R)=\{ h \in L \gfr(R) | \quad \lim_{s \ra 0} Ad(\eta(s))(h(z)) \quad \text{exists in} \quad L \gfr(R)\},\\
 \label{liealgpara}
\text{Thus},~~ \cP_{_\eta}:=\cP_{_\eta}^{\ltimes} \cap (1 \times LG) \quad \text{and} \quad  {\mathfrak{P}}_{_\eta}:={\mathfrak{P}}_{_\eta}^{\ltimes} \cap  (0 \oplus L \gfr).
 \end{eqnarray}

A {\it parahoric} subgroup  of $L^{\ltimes}G$ (resp. $LG$) is a subgroup that is conjugate to $\cP^{\ltimes}_{_\eta}$ (resp. $\cP_{_\eta}$) for some $\eta$. In this paper, we will mostly be using only the case when $\eta=\frac{1}{d}(1,\theta)$. In this case, letting $L\mathfrak{g}(R)=\mathfrak{g}(R(\!(s)\!))$ as in \eqref{meaning}, we may reformulate \eqref{liealgpara} as 
\begin{equation} \label{meaning1} 
{\mathfrak{P}}_{_\eta}(R)=\{h \in L\mathfrak{g}(R) | \quad \lim_{s \rightarrow 0} Ad(\theta(s))(h(s)) \quad \text{exists in} \quad L\mathfrak{g}(R) \}.
\end{equation}
By the conditions (\ref{parlimstozero}) and (\ref{liealgpara}), using (\ref{floorcondition}), we may express $\cP_{_\eta}$ in terms of generators  as follows:
\begin{equation}
\cP_{_\eta}(R)=<T(R\llbracket z \rrbracket), u_r( z^{-\lfloor (r,\theta) \rfloor} R\llbracket z \rrbracket), r \in \Phi>,
\end{equation}
Let $\mathfrak{t}$ denote the Cartan sub-algebra of $T$ and $\mathfrak{u}_r$ denote the root algebras associated to $u_r$. Then the Lie-algebra functor of $\cP_{_\eta}$ in terms of generators is given by 
\begin{equation}
Lie(\cP_{_\eta})(R)=<\mathfrak{t}(R\llbracket z \rrbracket), \mathfrak{u}_r(z^{-\lfloor (r,\theta) \rfloor} R\llbracket z \rrbracket), r \in \Phi>.
\end{equation}
Thus, for any $\eta$ of the type $(1,\theta)$,  we get the equality
\begin{equation} \label{parahorLiealgrel}
\text{Lie}(\cP_{_\eta})={\mathfrak{P}}_{_\eta}.
\end{equation}
This can be seen by  (\ref{liealglimstozero}) and (\ref{liealgpara}) and then  replacing the conjugation in (\ref{loopconjugation})  by $Ad(\eta(s))$.

For a rational $1$-PS $\theta$ of $T$, with $\eta=(1,\theta)$ we will have the notation: 
\begin{equation} \label{paratheta}
{\mathfrak{P}}_{_\theta}:={\mathfrak{P}}_{_\eta}.
\end{equation}
In particular, for $r \in \Phi$ let $\mathfrak g_{_r} \subset \mathfrak g$ be the root subspace. Let $m_{_r}(\theta) := -\lfloor{(r,\theta)}\rfloor$. The parahoric Lie algebra ${\mathfrak P}_{_{\theta}}(k) \subset \mathfrak g(K)$ has $T$-weight space decomposition as:
\beqa \label{paracartan}
{\mathfrak P}_{_{\theta}}(k) = \mathfrak t(A) \bigoplus z^{^{m_{_r}(\theta)}} \mathfrak g_{_r}(A).
\eeqa

\subsubsection{Bruhat-Tits group scheme} \label{btgpsch}
To each facet $\Sigma_\mathbb I \subset \ol{\mathbf{a}_0}$, Bruhat-Tits theory associates  a smooth group scheme $\cG_{_{\mathbb I}}$ on $\spec~A$, with connected fibres and whose generic fibre is $G \times _{_{\spec k}} \spec~K$. We call $ \cG_{_{\mathbb I}}$ a Bruhat-Tits group scheme on $\spec A$. To $\cG_{_{\mathbb I}}$, we can associate a pro-algebraic group $L^+(\cG_{_{\mathbb I}})$ over $k$ as follows:
\begin{equation}
L^+(\cG_{_{\mathbb I}})(R) :=  \cG_{_{\mathbb I}}(R\llbracket z \rrbracket).
\end{equation}
The $L^+(\cG_{_{\mathbb I}})$ also characterise $\cG_{_{\mathbb I}}$. We thus have the identifications: 
\begin{equation} L^+(\cG_{_{\mathbb I}}) = \cP_{_\eta}  \quad \text{and} \quad \text{Lie}(L^+(\cG_{_{\mathbb I}})) = {\mathfrak{P}}_{_\eta}
\end{equation}
for $\eta=(1,\theta)$ where $\theta$ is any rational $1$-PS lying in $\Sigma_\mathbb I$.

\subsubsection{Standard Parahoric subgroups} \label{sbtgpsch}
The {\em standard parahoric subgroups} of $G(K)$ are parahoric subgroups of the distinguished hyperspecial parahoric subgroup  $G(A)$. These are realized as inverse images under the evaluation map $ev: G(A) \to G(k)$ of standard parabolic subgroups of $G$. In particular, the standard {\em Iwahori subgroup} ${\mathfrak I}$ is a standard parahoric  and indeed, ${\mathfrak I} = ev^{-1}(B)$. Denoting by $Q_{_I} \subset G$ the parabolic subgroup associated to  the subset $I \subset S$ we will denote by $\cG^{^{st}}_{_{{I}}}$
the {\em standard parahoric subgroups} of $G(A)$ determined by 
\begin{equation} \label{standard} L^+(\cG^{^{st}}_{_{{I}}})(k):= ev^{-1}(Q_{_{I}}).
\end{equation} 
Thus, for ${I} \subset S$, setting $\mathbb I := I \cup \{ \alpha_0  \}$ we have 
\begin{equation} \label{relnstdnonstd} L^+(\cG^{^{st}}_{_{{I}}})(k) = L^+(\cG_{_{\mathbb I}})(k) \quad \text{and} \quad L^+(\mathfrak P_{_{I}}^{^{st}})=L^+(\mathfrak P_{_{\mathbb{I}}}).
\end{equation}


\section{A Bruhat-Tits group scheme on the wonderful compactification} \label{btgpX}

\subsubsection{The structure of ${\bf X}$}\label{earlyloghomo}
Let ${\bf X} := \ol{G_{_{_{\text{ad}}}}}$ be the wonderful compactification of $G_{_{_{\text{ad}}}}$. 
 Let $\{ D_{\alpha} | \alpha \in S \}$ denote the irreducible smooth {\em divisors} of ${\bf X}$.  Let $D:= \cup_{{\alpha \in S}} D_\alpha$. Then $X \setminus G_{_{_{\text{ad}}}} = D$.   The pair $({\bf X},D)$ is the primary example of a  $(G_{_{_{\text{ad}}}} \times G_{_{_{\text{ad}}}})$-homogenous pair  \cite[Brion]{brion}. 
Let $P_{_{I}}$ be the standard parabolic subgroup defined by subsets $I \subset S$ , the notation being such that the Levi subgroup $L_{_{I,ad}}$ containing $T_{_{\text{ad}}}$ has root system with basis $S \setminus I$. 
Recall that the $(G_{_{_{\text{ad}}}} \times G_{_{_{\text{ad}}}})$-orbits in $\bf X$ are indexed by subsets $I \subset S$ and have the following description: $$Z_{_{I}} = (G_{_{_{\text{ad}}}} \times G_{_{_{\text{ad}}}} 
) \times _{_{P_{_{I}} \times P^-_{_{I}}}} L_{_{I,ad}}.$$ Then by \cite[Proposition A1]{brion1}, each $Z_{_{I}}$ contains a unique base point $z_{_{I}}$ such that $(B \times B^-).z_{_{I}}$ is dense in $Z_{_{I}}$ and there is a $1$-PS $\lambda$ of $T$ satisfying  $P_{_{I}} = P(\lambda)$ and $\lim_{_{t \to 0}} \lambda(t) = z_{_{I}}$. The closures of these $\lambda$ define curves $C_{_I} \subset {\bf X}$ which meet the strata $Z_{_{I}}$ transversally at $z_{_{I}}$. In particular, if $I = \{\alpha\}$ a singleton, then the divisor $D_{_\alpha}$ is the orbit closure $\bar{Z}_{_{I}}$ and the $1$-PS can be taken to be the fundamental co-weight $\omega^{\vee}_\alpha$. The closure of the $1$-PS $\omega^{\vee}_\alpha:\mathbb G_{_m} \to G_{_{\text{ad}}}$ defines the curve $C_{_\alpha} \subset \bf X$ transversal to the divisor $D_{_\alpha}$ at the point $z_{_\alpha}$.    
 
\subsubsection{The local toric structure of ${\bf X}$} Let ${\bf Y}:= \overline{T_{_{\text{ad}}}}$ be the closure of $T_{_{\text{ad}}}$ in ${\bf X}$. Recall that ${\bf Y}$ is a projective toric variety associated to the fan of Weyl chambers. In what follows, we will mostly work with the affine toric embedding ${\bf Y}_{_0} := \overline{T_{_{ad,0}}} \simeq {\mathbb A}^{^{\ell}}$ which is the toric variety  associated to the negative Weyl chamber. Let $U$ (resp $U^{^-}$) be the unipotent radical (resp. its opposite) of the Borel subgroup $B \subset G$. We also recall that one may identify $U \times U^{^-} \times {\bf Y}_{_0}$ with an open subset of $\bf X$ and moreover the $G \times G$-translates of $U \times U^{^-} \times {\bf Y}_{_0}$ covers the whole of $\bf X$.

\subsubsection{Construction of a Lie-algebra bundle $\mathcal{R}$ on ${\bf Y}_{_0}$}  \label{constructionR}
The aim of this section is to construct a Lie algebra bundle on  ${\bf Y}_{_0}$ (see \eqref{gpschLiestab}), which may be termed ``parahoric". We close by making a similar  construction  on $\bf Y$ and $\bf X$. The  construction is motivated by the structure of  the kernel of the Tits fibration in \cite{brion}.

We begin with a couple of elementary  lemmas which should be well known.
\begin{lem}\label{langton} Let $E$ be a locally free sheaf on an irreducible smooth scheme $X$. Let $\xi \in X$ be the generic point and let $W \subset E_{_{\xi}}$ be an $\mathcal O_{_{\xi}}$-submodule. Then there exists a unique coherent subsheaf $F \subset E$ such that $F_{_{\xi}} = W$ and $Q := Coker(F \hra E)$ is torsion-free. Moreover $F$ is a reflexive sheaf. \end{lem}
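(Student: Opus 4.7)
The plan is to define $F$ directly from its required generic stalk, then verify coherence, the torsion-free quotient property, uniqueness, and reflexivity in sequence. Explicitly, for each open $U \subset X$ set
\[
F(U) := \{\, s \in E(U) : s_{_{\xi}} \in W \,\},
\]
which is manifestly a subsheaf of $E$. On any affine open $U = \spec(A)$ with $E(U) = M$, the inclusion $M \hookrightarrow E_{_{\xi}}$ is injective because $M$ is projective and hence torsion-free over the integral domain $A$, and $F(U) = M \cap W$ is the kernel of the $A$-linear map $M \to E_{_{\xi}}/W$. Because $M$ is a Noetherian $A$-module, $F(U)$ is finitely generated; hence $F$ is coherent. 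Passing to the colimit over affine opens containing $\xi$ gives $F_{_{\xi}} = W$, and torsion-freeness of $Q := E/F$ is immediate from the $K(X)$-stability of $W$: if $fm \in W \cap M$ with $0 \neq f \in A$, then $m = f^{-1}(fm) \in W$.

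Uniqueness will be a formal consequence of these two properties. Given another coherent subsheaf $F' \subset E$ with $F'_{_{\xi}} = W$ and $E/F'$ torsion-free, every local section of $F'$ has generic stalk in $W$, whence $F' \subset F$ by the defining recipe. Conversely, for $s \in F(U)$ the image of $s$ in $(E/F')(U)$ vanishes at $\xi$ and is therefore killed by some nonzero function; torsion-freeness of $E/F'$ then forces $s \in F'(U)$, so $F = F'$.

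For reflexivity I will invoke the standard characterization (Hartshorne, \emph{Stable Reflexive Sheaves}, Prop.~1.1): on a normal integral Noetherian scheme, a coherent sheaf is reflexive if and only if it can be realized as the kernel of a morphism from a locally free sheaf to a torsion-free coherent sheaf. Since $X$ is smooth (in particular normal) and the exact sequence $0 \to F \to E \to Q \to 0$ exhibits $F$ as exactly such a kernel, the reflexivity of $F$ follows at once. One could equivalently verify directly that $F$ is $S_2$: for any closed $Z \subset U$ of codimension $\geq 2$, the locally free sheaf $E$ satisfies $E(U) = E(U\setminus Z)$, and the defining condition on generic stalks is unchanged by restriction, giving $F(U) = F(U\setminus Z)$.

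There is essentially no serious obstacle; the only step requiring a moment's care is the coherence of $F$, which reduces immediately to Noetherianness of the affine rings on $X$. The remaining assertions are formal consequences of the construction combined with the Hartshorne criterion.
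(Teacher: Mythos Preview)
Your proof is correct and follows essentially the same approach as the paper: both define $F(U) = E(U)\cap W$ and deduce coherence from Noetherianity. The only minor differences are in the verifications: the paper checks torsion-freeness of $Q$ indirectly via maximality of $F$ among subsheaves with stalk $W$, whereas you give the cleaner one-line argument using that $W$ is a $K(X)$-subspace; and for reflexivity the paper argues that $F^{\vee\vee}/F$ is torsion and injects into the torsion-free $Q$, while you invoke Hartshorne's Prop.~1.1 directly. These are cosmetic variations of the same idea.
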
 \begin{proof} Define $\tilde{F}$ on the affine open $U \subset X$ by $\tilde{F}(U) := E(U) \cap W$.  Then it is easily seen that $\tilde{F}$ defines a coherent subsheaf $F \subset E$ and it is the maximal coherent subsheaf of $E$ whose fiber over $\xi$ is $W$. To check $Q$ is torsion-free, let $T$ be the torsion submodule of $Q$. Let $K := Ker(E \to Q/T)$. Then since $\xi \notin Supp(T)$, so $K_{_{\xi}} = W$ and hence, by the maximality of $F$ we have $K = F$. Since $E$ is locally free,  we have $F^{\vee \vee}/F \hra E/F$. But since $F^{\vee \vee}/F$ is only torsion, and $E/F=Q$ is torsion-free, it follows that $F$ is automatically a reflexive sheaf.\end{proof}
\begin{lem}\label{langton1} Let $X$ be as above and $i:U \hookrightarrow X$ an open subset such that $X \setminus U$ has codimension $\geq 2$ in $X$. Let $F_{_{U}}$ be a reflexive sheaf on $U$. Then $i_{_*}(F_{_U})$ is a reflexive sheaf on $X$ which extends $F_{_U}$. \end{lem}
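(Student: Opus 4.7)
The plan is to reduce the statement to the standard fact that a reflexive sheaf on a smooth scheme has depth at least two along any codimension $\geq 2$ closed subset, so that it agrees with the pushforward of its restriction to the complement.

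First I would produce an arbitrary coherent extension of $F_{_{U}}$ to all of $X$. Since $X$ is Noetherian and $F_{_{U}}$ is coherent, there exists a coherent sheaf $\widetilde{F}$ on $X$ with $\widetilde{F}|_{_U} = F_{_{U}}$ (standard extension of coherent sheaves on Noetherian schemes, cf.\ Hartshorne, Exercise II.5.15). Let $G := \widetilde{F}^{\vee \vee}$. Being the double dual of a coherent sheaf on a Noetherian scheme, $G$ is coherent, and it is reflexive because double-dualising is idempotent on coherent sheaves over Noetherian schemes. Since formation of the dual commutes with restriction to opens, we have
\[
G|_{_U} \;=\; (\widetilde{F}|_{_U})^{\vee \vee} \;=\; F_{_{U}}^{\vee \vee} \;=\; F_{_{U}},
\]
using that $F_{_{U}}$ is already reflexive on $U$.

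Next I would invoke the $S_{_2}$-property for reflexive sheaves on a smooth scheme (Hartshorne, \emph{Stable reflexive sheaves}, Prop.~1.6): if $\mathcal G$ is any reflexive coherent sheaf on the smooth scheme $X$ and $Z \subset X$ is closed of codimension $\geq 2$, then the natural map $\mathcal G \to j_{_*}(\mathcal G|_{_{X\setminus Z}})$ is an isomorphism, where $j : X \setminus Z \hookrightarrow X$. Applied with $\mathcal G = G$ and $Z = X \setminus U$, this gives $G \cong i_{_*}(G|_{_U}) = i_{_*}(F_{_{U}})$. Hence $i_{_*}F_{_{U}}$ inherits coherence and reflexivity from $G$, and it manifestly extends $F_{_{U}}$ via the adjunction map.

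The only substantive input is the $S_{_2}$-property for reflexive sheaves on smooth schemes, which is a classical result of Hartshorne; everything else is formal manipulation with duals, restrictions and the fact that $X$ is Noetherian, so I do not foresee a genuine obstacle to implementing the plan.
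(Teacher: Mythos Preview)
Your argument is correct and is essentially identical to the paper's own proof: both extend $F_{_U}$ to a coherent sheaf on $X$ (the paper cites EGA I, Cor.\ 9.4.8 rather than Hartshorne's exercise), take the double dual to obtain a reflexive extension, and then invoke \cite[Proposition~1.6]{hartshorne} to identify it with $i_{_*}(F_{_U})$.
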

\begin{proof}. By \cite[Corollaire 9.4.8]{ega}, there exists a coherent $\mathcal O_{_X}$-module $F_1$ such that $F_1|_{_{U}} \simeq F_{_U}$. Set $F := F_1^{^{**}}$ to be the double dual. Then  $F$ is reflexive and also since $F_{_U}$ is reflexive, $F|_{_{U}} \simeq F_{_U}$. Hence, we have  $i_{_{*}}(F_{_U}) = F$ \cite[Proposition 1.6]{hartshorne}. \end{proof}

Let $\lambda= \sum_{\alpha \in S} k_{_\alpha} \omega_{_\alpha}^{{\vee}}$ be a dominant $1$-PS of $T_{_{\text{ad}}}$. It defines the curve $C_{_{\lambda}} \subset \bf Y$.  When the $k_{_\alpha}$ are constrained to be  in $\{0,1\}$, then these curves are the {\it standard curves} $
C_{_I}, I \subset S$ considered in \S\eqref{earlyloghomo}, which meet the strata transversally at the points $z_{_I}$. We call these dominant $\lambda$'s as {\em standard}. In particular, the curve defined by $\omega_{_\alpha}^{{\vee}}$ meets the divisor $H_{_{\alpha}}$ transversally at $z_{_\alpha}$. 
The formal neighbourhood of the closed point $z_{_\lambda}$ in $C_{_\lambda}$ identifies  with the spectrum  $U_{_{\lambda}}:=\spec (A_{_{\lambda}} )$ of  $A_{_{\lambda}} = k\llbracket s \rrbracket$ with quotient field $K_{_{\lambda}}=k(\!(s)\!)$.  We set
\begin{equation} \label{thetalambdal+1}
\theta_\lambda := \sum_{\alpha \in S} k_{_\alpha} \frac{\theta_{_\alpha}}{\ell+1}. 
\end{equation}

When $k_{\alpha} \in \{0,1\}$, then  $\theta_\lambda$ does not lie on the far wall   of $\mathbf{a}_0$. So $\mathfrak P^{^{st}}_{_{\theta_\lambda}}=\mathfrak P_{_{\theta_\lambda}}$ \S \eqref{sbtgpsch}.

\begin{thm} \label{gpschLiestab} There exists a canonical Lie-algebra bundle $\mathcal{R}$ on ${\bf Y}_{_0}$ which extends the trivial bundle with fiber $ \mathfrak g$ on ${T_{_{\text{ad}}}}  \subset {\bf Y}_{_0}$ and such that for $K_{_\alpha} \in \{0,1\}$ we have the identification of functors from the category of $k$-algebras to $k$-Lie-algebras:
\begin{equation}\label{loopfunct}
L^+ (\mathfrak{P}^{^{st}}_{_{\theta_\lambda}})= L^+( \mathcal R\mid_{_{U_{_{\lambda}}}}).
\end{equation}

\end{thm}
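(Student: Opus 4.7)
The plan is to build $\cR$ by a Hartogs-type reflexive extension of parahoric data localised at each simple boundary divisor, using Lemmas~\ref{langton} and~\ref{langton1}.

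First, identify ${\bf Y}_{_0}\cong\mathbb{A}^\ell$ with coordinates $x_\alpha$ ($\alpha\in S$) extending the characters $e^\alpha$ on $T_{_{\text{ad}}}$, so that $D_\alpha\cap{\bf Y}_{_0}=\{x_\alpha=0\}$, and observe that for a standard $\lambda=\sum_{\alpha\in I_\lambda}\omega^\vee_\alpha$ the pullback of $x_\beta$ along $C_\lambda\hookrightarrow{\bf Y}_{_0}$ to $U_\lambda$ equals $s^{[\beta\in I_\lambda]}$.

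Second, for each $\alpha\in S$ let $\xi_\alpha$ be the generic point of $D_\alpha\cap{\bf Y}_{_0}$ and $\cO_{_{\xi_\alpha}}$ the associated dvr. The standard parahoric Lie algebra $\mathfrak{P}^{st}_{\{\alpha\}}$ determines, by base change along $A\to\cO_{_{\xi_\alpha}}$ sending the uniformiser to $x_\alpha$, an $\cO_{_{\xi_\alpha}}$-lattice $W_\alpha\subset\gfr\otimes K({\bf Y}_{_0})$ that extends $\gfr\otimes\cO_{_{T_{_{\text{ad}}}}}$ across the generic point of $D_\alpha$. Its root-space decomposition attaches the nontrivial shift $x_\alpha\gfr_{_{-r'}}$ precisely for negative roots $-r'$ with $\alpha\in\text{supp}(r')$, and no shift on the Cartan or positive-root directions.

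Third, assemble $\cR$. On the open set $V:={\bf Y}_{_0}\setminus\bigcup_{\alpha\neq\beta}(D_\alpha\cap D_\beta)$ define $\cR_{_V}$ to be the unique coherent $\cO_{_V}$-subsheaf of $j_*(\gfr\otimes\cO_{_{T_{_{\text{ad}}}}})$ whose stalk at each $\xi_\alpha$ is $W_\alpha$ (existence and uniqueness by Lemma~\ref{langton}); each $W_\alpha$ being free of rank $\dim\gfr$, the sheaf $\cR_{_V}$ is locally free. Extend to a reflexive sheaf $\cR$ on all of ${\bf Y}_{_0}$ by Lemma~\ref{langton1}, since ${\bf Y}_{_0}\setminus V$ has codimension $\geq 2$. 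The Lie bracket on $\gfr\otimes\cO_{_{T_{_{\text{ad}}}}}$ extends to $\cR$ because each $W_\alpha$ is a Lie subalgebra of $\gfr\otimes K({\bf Y}_{_0})$ and hence so is the intersection.

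Fourth, verify \eqref{loopfunct}. Pull back $\cR$ along $C_\lambda$ to $U_\lambda$; by construction $\cR|_{_{U_\lambda}}$ equals the intersection of the base-changed lattices $W_\alpha\otimes_{_{\cO_{_{\xi_\alpha}}}}A_\lambda$ for $\alpha\in I_\lambda$, inside $\gfr\otimes K_\lambda$. A direct root-space computation using \eqref{paracartan} then gives
\[
\bigcap_{\alpha\in I_\lambda}\mathfrak{P}^{st}_{\{\alpha\}}=\mathfrak{P}^{st}_{\theta_\lambda}
\]
as sub-$A_\lambda$-Lie algebras of $\gfr\otimes K_\lambda$, because for a negative root $-r'$ the intersected shift pulls back to $s^{[\text{supp}(r')\cap I_\lambda\neq\emptyset]}=s^{m_{-r'}(\theta_\lambda)}$. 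Base change along $A_\lambda\to R\llbracket s\rrbracket$ promotes this module identification to the functorial one.

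The main obstacle is establishing local freeness of the reflexive extension on all of ${\bf Y}_{_0}$: on a smooth affine toric variety rank-$>1$ reflexive sheaves need not be locally free, so one must exploit the $T_{_{\text{ad}}}$-equivariance of the construction together with an explicit toric analysis of the intersection $\bigcap_\alpha W_\alpha$ along the deeper torus-invariant strata.
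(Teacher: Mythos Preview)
Your construction of $\mathcal R$ — prescribe a lattice $W_\alpha$ at each codimension-one generic point, glue over $V$, then push forward reflexively — produces the same sheaf as the paper, but the paper takes a different and more global route. It introduces an auxiliary torus cover $p:\mathbf T\to T_{\mathrm{ad}}$ together with a ``roots'' isomorphism $\mathfrak r:\mathbf T\to T_{\mathrm{ad}}$, sets $j:=(\mathrm{Ad}\circ\mathfrak r)\circ p^{*}$, and defines in one stroke
\[
\mathcal R(B_0^{+})\ :=\ j\bigl(\mathfrak g(B_0)\bigr)\ \cap\ \mathfrak g(B^{+})\ \subset\ \mathfrak g(B),
\]
already a $B_0^{+}$-module on all of $\mathbf Y_0=\mathrm{Spec}\,B_0^{+}$; no Hartogs step is required. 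Local freeness is then exactly the argument you flag in your last paragraph: the $T_{\mathrm{ad}}$-weight decomposition splits $\mathcal R$ into the trivial Cartan piece plus one invertible sheaf per root, and reflexive rank~$1$ sheaves extend across codimension~$\ge 2$. So your ``main obstacle'' is not really an obstacle, and is dispatched by the paper in the way you anticipated.

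The genuine gap is Step~4. Once one knows $\mathcal R$ is locally free with $\mathfrak g_{-r'}$-component the ideal sheaf $\bigl(\prod_{\alpha\in\mathrm{supp}(r')}x_\alpha\bigr)\mathcal O_{\mathbf Y_0}$, the honest pullback along $C_\lambda$ (where $x_\alpha\mapsto s^{k_\alpha}$) gives shift $s^{|\mathrm{supp}(r')\cap I_\lambda|}$ on that component, not $s^{[\mathrm{supp}(r')\cap I_\lambda\ne\emptyset]}$. So the sentence ``by construction $\mathcal R|_{U_\lambda}$ equals the intersection of the base-changed lattices $W_\alpha\otimes A_\lambda$'' is false as stated: pullback of a locally free sheaf along a closed immersion through a deep stratum does not compute the intersection of its codimension-one localisations --- it accumulates the divisorial multiplicities. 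The paper avoids this trap by never attempting a direct lattice comparison. Its cover is engineered so that, upon restriction to $U_\lambda$, the map $j$ becomes $\mathrm{Ad}(\theta_\lambda(s))$ (because $\mathfrak r\circ\boldsymbol{\theta}_\lambda=\theta_\lambda$ while $p\circ\boldsymbol{\theta}_\lambda=\lambda$), and the defining condition ``$j(x_K)\in\mathfrak g(B^{+})$'' is then read over the lifted curve in $\mathrm{Spec}\,B^{+}$ as the limit condition~\eqref{meaning1} characterising $\mathfrak P_{\theta_\lambda}$. It is this $\mathrm{Ad}$-twisted interpretation of sections of $\mathcal R$ over $U_\lambda$ --- not a naive root-shift count on the pulled-back line bundles --- that the paper uses to obtain \eqref{loopfunct}.
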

\begin{proof} With notations as in \eqref{dalpha}, consider the inclusion of lattices:
\begin{equation}\label{latticeinclusion}
\bigoplus_{_{\alpha \in S}} {\mathbb Z}.\omega_{_\alpha}^{{\vee}}
 \hookrightarrow \bigoplus_{_{\alpha \in S}} {\mathbb Z}.\frac{{\theta_{_\alpha}}}{e_{_\alpha}.(\ell + 1)}.
\end{equation}

This induces an inclusion of $k$-algebras $B_{_0} \subset B$, where
\begin{equation} B:= k[y_{\alpha},y_{\alpha}^{-1}]_{_{\alpha \in S}},~~
\\
B_{_{0}}:= k[x_{\alpha},x_{\alpha}^{-1}]_{_{\alpha \in S}}
\end{equation}
and $B_{_0} \subset B$ is given by the equations: $\{{y_{_{\alpha}}^{^{(\ell+1) d_{_\alpha}}}=x_{_{\alpha}}}\}_{_{\alpha \in S}}$. Let $
{\bf T}:=\spec(B), \text{and} T_{_{\text{ad}}}=\spec(B_{_0})$ and let $p:{\bf T} \to T_{_{\text{ad}}}$ be the natural morphism. We define the ``roots" map:
\begin{eqnarray}
\mathfrak r:{\bf T} \to T_{_{\text{ad}}}, ~~~\text{as} \\
{\mathfrak r}^{\#}\big(\big(x_{\alpha}\big)\big):=\big(y_{\alpha}\big).
\end{eqnarray}
Note that as a map between tori, $\mathfrak r$ is an isomorphism.  
We consider the map
\begin{eqnarray}\label{thebbAd}
Ad \circ \mathfrak r: {\bf T} \times \mathfrak{g} \rightarrow {\bf T} \times \mathfrak{g} \\
({\bf t},x) \mapsto \big({{\bf t}}, Ad\big(\mathfrak r({\bf t})\big)(x)\big).
\end{eqnarray}

We define the embedding of modules
\begin{equation}
j: \mathfrak{g}(B_{_0}) \hookrightarrow \mathfrak{g}(B) \stackrel{Ad \circ \mathfrak r}{\rightarrow} \mathfrak{g}(B)
\end{equation}
where the second map is the one induced by \eqref{thebbAd} on sections.  When $\mathfrak{g}(B_0)$ is viewed as sections of the  trivial bundle on $T_{_{\text{ad}}}$ with fibers $\mathfrak{g}$, then it also has a $T_{_{\text{ad}}}$-weight space decomposition.

Let $B^+  = k[y_{_{\alpha}}]_{_{\alpha \in S}}$ and $B_{_0}^+:=k[x_{_{\alpha}}]_{_{\alpha \in S}}$.  Taking intersection as Lie submodules of $\mathfrak{g}(B)$ we define the following $B_{_0}^+$-module
\begin{equation}\label{sheafofR}
\mathcal{R}(B_{_0}^+):= j(\mathfrak{g}(B_{_0})) \cap \mathfrak{g}(B^+).
\end{equation}
Observe that $\mathcal R$ is a reflexive sheaf   \eqref{langton} on the affine embedding $T_{_{\text{ad}}} \hookrightarrow {\bf Y}_{_0} := \spec(B_{_0}^+) (\simeq {\mathbb A}^{^{\ell}})$. Further, this is a sheaf of Lie algebras  with its Lie bracket induced from $\mathfrak{g}(B)$.

We now check that $\mathcal{R}$ is locally-free on ${\bf Y}_{_0}$. Observe  that the intersection \eqref{sheafofR} also respects $T_{_{\text{ad}}}$-weight space decomposition on the sections. More precisely, for a root $r \in \Phi$, since by definition we have the identification $\mathcal{R}(B_{_0}^+)_r=j(\mathfrak{g}_r(B_{_0})) \cap \mathfrak{g}_r(B^+)$, we get the following equalities:
\begin{equation}
j(\mathfrak{g}(B_{_0})) \cap \mathfrak{g}_r(B^+) = j(\mathfrak{g}_r(B_{_0})) \cap \mathfrak{g}(B^+)= \mathcal{R}(B_{_0}^+)_r.
\end{equation} 

Since $\mathcal{R}$ is reflexive, so there is an open subset $U \subset {\bf Y}_{_0}$,  whose complement is of codimension at least two such that the restriction $\mathcal{R}':= \mathcal{R}|_{_{U}}$ is locally free. 
Clearly $U$ contains $T_{_{\text{ad}}}$ and the generic points $\zeta_{_{\alpha}}$ of the divisors $H_{_{\alpha}}$. This gives a  decomposition on $\mathcal{R}'$ obtained by restriction from $\mathcal{R}$.   The locally free sheaf $\mathcal{R}'$ is a direct sum of the trivial bundle $\text{Lie}(T_{_{\text{ad}}}) \times U$ (coming from the $0$-weight space) and the invertible sheaves coming from the root decomposition. Now since invertible sheaves extend across codimension $\geq 2$, the reflexivity of $\mathcal{R}$ implies that this direct sum decomposition of $\mathcal{R}'$ extends to  ${\bf Y}_{_0}$ (see \cite[Proposition 1.6, page 126]{hartshorne}). Whence  $\mathcal{R}$ is locally free.

Let $\lambda:\mathbb G_{_m} \to T_{_{\text{ad}}}$ be a $1$-PS. This defines a rational $1$-PS $\theta_{_\lambda}:\mathbb G_{_m} \to T_{_{\text{ad}}}$. The map $\mathfrak r: {\bf T} \to T_{_{\text{ad}}}$ is abstractly an isomorphism of tori and we can therefore consider the rational $1$-PS $\boldsymbol{\theta_{_\lambda}}:\mathbb G_{_m} \to {\bf T}$ defined by $\boldsymbol{\theta_{_\lambda}} := \mathfrak r^{^{-1}} \circ \theta_{_\lambda}$. Let $p:{\bf T} \to T_{_{\text{ad}}}$ be the canonical map induced by $B_{_0} \subset B$. We observe the following:
\begin{enumerate}
\item $p \circ \boldsymbol{\theta_{_\lambda}} = \lambda$,
\item $\mathfrak r \circ \boldsymbol{\theta_{_\lambda}} = \theta_{_\lambda}.$
\end{enumerate}

Let $\lambda = \sum k_{_\alpha}.\omega_{_\alpha}^{{\vee}}$ be a  standard dominant $1$-PS of $T_{_{\text{ad}}}$.  
For example, the case $\lambda =\omega_{_\alpha}^{{\vee}}$, viewed as a $1$-PS, may be expressed in $\ell$-many coordinates as follows:
\begin{equation}
\omega_{_\alpha}^{{\vee}}(s) = (1,\ldots, s, \ldots 1)
\end{equation}
with $s$ at the coordinate corresponding to $\alpha \in S$. Thus, we may express $\lambda$ as:
\begin{equation}
\lambda(s) = \prod_{\alpha \in S} \omega_{_\alpha}^{{\vee}}(s^{^{k_{_{\alpha}}}}). 
\end{equation}
Set $f_{_\alpha} := \frac{k_{_\alpha}} {d_{_\alpha}.(\ell + 1)}$. In this case we have the expression:
\begin{equation}\label{theelam}
\boldsymbol{\theta_{_\lambda}}(s) = \prod_{\alpha \in S} \omega_{_\alpha}^{{\vee}}(s^{^{f_{_{\alpha}}}}).
\end{equation}
We now check the isomorphism $L^+({\mathcal{R}}|_{_{U_{_\lambda}}}) \simeq L^+(\mathfrak P_{_{\theta_\lambda}}^{^{st}})$ by first evaluating at $k$-valued points. By \eqref{sheafofR}, a section $\text{\cursive s}$ of $\mathcal R$ over $U_{_\lambda} = \spec A_{_\lambda}$,  is firstly given by a local section $\text{\cursive s}_{_K}$ over the generic point of $A_{_\lambda}=k\llbracket s \rrbracket$. This is firstly an element in  $j(\gfr(K))$. Observe that  this element may be written as
$$\text{\cursive s}_{_K} = Ad\big(\mathfrak r({\boldsymbol{\theta_{_\lambda}}(s)}\big)(x_{_K}))= Ad\big(\theta_{_{\lambda}}(s)\big)(x_{_{K}})).$$

Let $d$ be any positive integer such that $d. \theta_\lambda$ becomes a $1$-PS of $T_{_{\text{ad}}}$. 
Let $(\tilde{B}, w,L)$ be defined by taking a $d$-th root of the uniformizer $s$ of $A_{_\lambda}$ and let $\tilde{U}_{_\lambda}=\spec (\tilde{B})$.    By \eqref{theelam}, we may express $\boldsymbol{\theta_{_\lambda}}$ in terms of $w$ as  $$\boldsymbol{\theta_{_\lambda}}(s) = \prod_{\alpha \in S} \omega_{_\alpha}^{{\vee}}(w^{^{d.f_{_{\alpha}}}}).$$ In other words, $\theta_\lambda(s)=\theta_\lambda(w^d)=(d \theta_{_\lambda})(w)$ has become integral in $w$. Therefore, the membership of $\text{\cursive s}$ in $\mathfrak{g}(B^+)$ \eqref{sheafofR} gets interpreted as follows:
\begin{equation}\label{conditions1}     Ad(\theta_\lambda(s))(x_{K}) \in L \mathfrak{g} (\tilde{B}).
\end{equation} 

But this is exactly the Lie-algebra version  of the condition "$\lim_{s \rightarrow 0}$" condition for $\eta=(1,\theta)$ in the observation \eqref{observation}.
In our situation we have assumed $R=k$ and so \eqref{conditions1} is equivalent to
\begin{equation}  \text{\cursive s} \in   \mathfrak{P}_{_{\theta_{_{\lambda}}}}(k).
\end{equation}
But by \eqref{relnstdnonstd}, $\mathfrak{P}_{_{\theta_{_{\lambda}}}}(k)= \mathfrak{P}_{_{\theta_{_{\lambda}}}}^{^{st}}(k)$  because $\theta_{_{\lambda}}$ lies in the alcove $\bf a_{_0}$ but not on the far wall.
Thus, we get the equality
\begin{equation}
\mathcal{R}|_{_{U_{_\lambda}}}(A_{_\lambda})= \mathfrak{P}_{_{\theta_{_{\lambda}}}}^{^{st}}(k).
\end{equation}
This proves the assertion for $k$-valued points.
The above proof goes through for all $k$-algebras because the Lie-bracket is already defined on $k$, and so is the underlying module structure. \end{proof}

\begin{rem} \label{gendomlambda} Let $\lambda$ be an arbitrary dominant $1$-PS of $T_{_{\text{ad}}}$. In general,  the coefficients $k_{_\alpha}$ in its expression in terms of the $\omega_{_\alpha}^{\vee}$ need not be such that $\sum k_{_\alpha}/(\ell +1) < 1$. Then we get the identification $L^+ (\mathfrak{P}_{_{\theta_\lambda}})= L^+( \mathcal R\mid_{_{U_{_{\lambda}}}})$ without the {\em standardness} of the parahoric.
\end{rem}

\begin{rem} \label{torickawamata} In the setting of Theorem \ref{gpschLiestab}, we may view $p:\spec(B^{^+}) \to \spec(B_{_0}^{^+})$ as a ramified covering space of affine toric varieties induced by the inclusion \eqref{latticeinclusion} of lattices. The Galois group for this covering is the dual of the quotient of lattices.  The computation in \ref{gpschLiestab} can be seen in the light of \cite{base}, in the sense that via an "invariant direct image" process, one is able to recover the complete data of all {\em standard} parahoric Lie algebras from this explicit Kawamata cover.

\end{rem}

\subsubsection{Construction of a Lie-algebra bundle $\mathcal{R}$ on ${\bf Y}$} 
In this subsection, we deduce the existence of Lie algebra bundles on the projective non-singular toric variety $\bf Y$  with the classifying properties. The variety $\bf Y$ is the toric variety for ${T_{_{\text{ad}}}}$ with fan consisting of the Weyl chambers. The complement ${\bf Y} \setminus {T_{_{\text{ad}}}}$ is a union  of translates of the hyperplanes $H_{_\alpha} \subset {\bf Y}_{_0}$ by the Weyl group  $W$. Thus, for each $w \in W$, we can take the locally free sheaf $\mathcal R$ of Lie-algebras on ${\bf Y}_{_0}$ and its $w$-translate $w.\mathcal R$ on ${\bf Y}_{_w} := w{\bf Y}_{_0}$ in $\bf Y$. 

Let $Y'' \subset {\bf Y}_{_0}$ be the open subset consisting of
the open orbit $T_{_\text{ad}}$ and of the orbits of codimension 1.
In other words, $Y''$ is obtained from ${\bf Y}_{_0}$ by removing
all the $T_{_\text{ad}}$-orbits of codimension at least 2.  The data above defines a locally free sheaf of Lie algebras on $\cup_{_{w \in W}} w. Y''$ because $w.Y'' \cap Y''$ is either $Y''$ or $T_{_\text{ad}}$. Using \eqref{langton1}, we get a reflexive sheaf which by abuse of notation can be called $\mathcal R$ on the whole of ${\bf Y}$. That $\mathcal R$ on $\bf Y$ is locally free is immediate since its restrictions to the open cover given by ${\bf Y}_{_0}$ and its $W$-translates  is locally free. The Lie algebra structure also extends since it is already there on an open subset with complement of codimension $\geq 2$. Thus, we conclude:
\begin{Cor} \label{gpschLiestab1} There exists a canonical Lie-algebra bundle $\mathcal{R}$ on ${\bf Y}$ which extends the trivial bundle with fiber $ \mathfrak g$ on ${T_{_{\text{ad}}}}  \subset {\bf Y}$ with properties as in \eqref{gpschLiestab} at the translates of the hyperplanes $H_{_\alpha}$. 
\end{Cor}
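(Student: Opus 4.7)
The plan is to assemble $\mathcal{R}$ on ${\bf Y}$ by gluing Weyl-group translates of the sheaf already built on ${\bf Y}_{_0}$, and then filling in the codimension-$\geq 2$ toric strata via reflexivity. For each $w\in W$, translation carries ${\bf Y}_{_0}$ to the affine chart ${\bf Y}_{_w} := w\cdot{\bf Y}_{_0}$ of ${\bf Y}$, and Theorem \ref{gpschLiestab}, applied after transport by $w$, yields a locally free Lie-algebra sheaf $w\cdot\mathcal{R}$ on ${\bf Y}_{_w}$ with its Lie bracket. I denote by $Y''_w \subset {\bf Y}_{_w}$ the open subset obtained by deleting all $T_{_{\text{ad}}}$-orbits of codimension $\geq 2$ and set $U := \bigcup_{w\in W} Y''_w$; the fan description of ${\bf Y}$ ensures that ${\bf Y}\setminus U$ is a union of $T_{_{\text{ad}}}$-orbits of codimension at least two.

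Next I verify that the $w\cdot\mathcal{R}|_{Y''_w}$ glue on overlaps. On the open torus $T_{_{\text{ad}}}\subset Y''_w \cap Y''_{w'}$, every $w\cdot\mathcal{R}$ restricts to the trivial bundle $T_{_{\text{ad}}}\times\mathfrak{g}$ with its standard Lie bracket, by the defining construction \eqref{sheafofR}. If a pair $Y''_w \cap Y''_{w'}$ additionally meets a codimension-one orbit $H$ (which happens precisely when the two Weyl chambers share the corresponding wall), then at the generic point of $H$ the two restrictions are each identified, via the $L^+$-characterization in Theorem \ref{gpschLiestab}, with the same parahoric Lie algebra $\mathfrak{P}_{_{\theta_H}}$ attached to the vertex $\theta_H$ of the apartment determined by $H$. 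Since this datum depends only on the facet and not on the chamber from which it is approached, the two restrictions coincide, and the local gluings assemble into a locally free sheaf of Lie algebras on $U$.

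Finally, I invoke Lemma \ref{langton1} to extend this sheaf across the codimension-$\geq 2$ complement to a reflexive sheaf $\mathcal{R}$ on ${\bf Y}$. Local freeness of $\mathcal{R}$ on ${\bf Y}$ is automatic: the restriction $\mathcal{R}|_{{\bf Y}_{_w}}$ agrees with the locally free sheaf $w\cdot\mathcal{R}$ on the codimension-$\geq 2$ open $Y''_w \subset {\bf Y}_{_w}$, so by reflexivity the two coincide on ${\bf Y}_{_w}\simeq \mathbb{A}^{\ell}$. The Lie bracket on $U$, as a morphism of locally free sheaves $\mathcal{R}\otimes\mathcal{R}\to\mathcal{R}$, extends uniquely across the codimension-$\geq 2$ complement by Hartogs. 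The classifying properties at $W$-translates of the walls $H_{_\alpha}$ are then inherited verbatim from those at $H_{_\alpha}\subset {\bf Y}_{_0}$ established in Theorem \ref{gpschLiestab}. The principal technical point is the compatibility on a shared wall, which is essentially the $W$-equivariance of the assignment of parahorics to facets of the affine apartment and can alternatively be checked directly from \eqref{sheafofR} by tracking the $T_{_{\text{ad}}}$-weight decomposition.
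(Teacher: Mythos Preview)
Your proof is correct and follows the same route as the paper: transport $\mathcal{R}$ to each chart ${\bf Y}_{_w} = w\cdot{\bf Y}_{_0}$, glue on the codimension-$\leq 1$ open set, extend reflexively via Lemma~\ref{langton1}, and verify local freeness and the Lie bracket chart by chart. The one visible difference is that the paper asserts $w.Y'' \cap Y''$ is either $Y''$ or $T_{_{\text{ad}}}$, whereas you (more carefully, and more accurately for adjacent chambers) allow a shared codimension-one orbit and check compatibility there; the cleanest justification is the one you mention at the end, via the $T_{_{\text{ad}}}$-weight decomposition of \eqref{sheafofR}, since the $L^{+}$-statement in Theorem~\ref{gpschLiestab} is literally about the curves $U_{_\lambda}$ rather than the localization at the generic point of $H$.
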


\subsubsection{Weil restrictions  and \text{Lie} algebras}
Let $X$ be an arbitrary $k$-scheme.
For an affine (or possibly ind-affine) group scheme $\cH \rightarrow X$, we denote $\text{Lie}(\cH)$ the sheaf of Lie-algebras on $X$ whose sections on $U \rightarrow X$ are given by
\begin{equation} \label{Lieaffinegpsch} \text{Lie} (\cH)(U)= \text{ker}(\cH(U \times k[\epsilon]) \rightarrow \cH(U)).
\end{equation}

\begin{lem} \label{Weilrestriction} Let $p: \tilde{X} \ra X$ be a finite flat map of noetherian schemes. Let $Res_{\tilde{X}/X}$ denote the ``Weil restriction of scalars" functor. Let $\cH  \ra \tilde{X}$ be an affine group scheme. Then we have a natural isomorphism
\begin{equation}
{\text{Lie}}(Res_{\tilde{X}/X} \cH) \simeq Res_{\tilde{X}/X} {\text{Lie}}(\cH).
\end{equation}  
When $p$ is also Galois with Galois group $\Gamma$, then in characteristic $0$, we have 
\begin{equation}
{\text{Lie}}((Res_{\tilde{X}/X} \cH)^{\Gamma}) \simeq (Res_{\tilde{X}/X} {\text{Lie}}(\cH))^{\Gamma}.
\end{equation}
\end{lem}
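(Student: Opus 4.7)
The plan is to verify both isomorphisms functorially, by unwinding the definition of $\text{Lie}$ given in \eqref{Lieaffinegpsch} together with that of the Weil restriction, $(Res_{\tilde X / X} \cG)(U) = \cG(U \times_X \tilde X)$ for any $U \to X$.

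For the first isomorphism, the key observation is the canonical identification
\[
(U \times k[\epsilon]) \times_X \tilde X \;\simeq\; (U \times_X \tilde X) \times k[\epsilon],
\]
which follows from the fact that base change commutes with products against the fixed $k$-scheme $\spec k[\epsilon]$. Plugging this into \eqref{Lieaffinegpsch} gives
\[
\text{Lie}(Res_{\tilde X/X} \cH)(U) = \ker\!\bigl( \cH((U \times_X \tilde X) \times k[\epsilon]) \to \cH(U \times_X \tilde X) \bigr) = \text{Lie}(\cH)(U \times_X \tilde X),
\]
which by definition is $(Res_{\tilde X/X} \text{Lie}(\cH))(U)$; naturality in $U$ is then automatic, yielding the first claim.

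For the second isomorphism, the $\Gamma$-action on $\tilde X/X$ induces $\Gamma$-actions on both sides of the first isomorphism in a manner making that isomorphism $\Gamma$-equivariant, so it suffices to show that $\text{Lie}$ commutes with the formation of $\Gamma$-invariants. For any affine group scheme $\cG$ equipped with a $\Gamma$-action, the fixed-point subfunctor $\cG^\Gamma$ is the equalizer of the action maps and hence represents $U \mapsto \cG(U)^\Gamma$. Since the formation of $\Gamma$-invariants is itself a limit, it commutes with the kernel defining $\text{Lie}$, giving
\[
\text{Lie}(\cG^\Gamma)(U) = \ker\!\bigl( \cG(U \times k[\epsilon])^\Gamma \to \cG(U)^\Gamma \bigr) = \ker\!\bigl( \cG(U \times k[\epsilon]) \to \cG(U) \bigr)^\Gamma = \text{Lie}(\cG)^\Gamma(U).
\]
Applying this to $\cG = Res_{\tilde X/X} \cH$ and composing with the first isomorphism delivers the second claim.

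The main subtlety lies in the second part: the characteristic zero hypothesis is what guarantees that taking $\Gamma$-invariants is an exact functor (via Maschke's theorem, equivalently the linear reductivity of the finite group $\Gamma$). This ensures that the scheme-theoretic fixed points $\cG^\Gamma$ genuinely represent the invariants functor on all test schemes, including dual-number extensions of the form $U \times k[\epsilon]$, so that the interchange of $\Gamma$-invariants with the kernel used to define $\text{Lie}$ is valid. The first part is purely formal and requires no hypothesis beyond the existence of the Weil restriction, which is guaranteed by the finite flatness of $p$.
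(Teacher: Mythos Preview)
Your argument is correct and considerably more explicit than the paper's own proof, which simply cites \cite[Page 533]{cgp} for the first isomorphism and \cite[3.1, page 293]{edix} for the second. The functor-of-points verification you give for the first part is the standard one and is essentially what one finds spelled out in \cite{cgp}.

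For the second part your limit-commutation argument is also valid, but your diagnosis of where characteristic zero enters is off. The identification $\cG^\Gamma(V) = \cG(V)^\Gamma$ holds in \emph{any} characteristic: the scheme-theoretic fixed locus is the equalizer of the action maps and therefore always represents the invariants functor on arbitrary test schemes $V$, dual-number thickenings included. Your displayed chain of equalities thus already proves the second isomorphism at the level of the functor \eqref{Lieaffinegpsch} without any hypothesis on the characteristic; Maschke's theorem is not invoked anywhere in that chain. What characteristic zero (more precisely, $|\Gamma|$ invertible) genuinely provides, via Edixhoven's result, is that $(Res_{\tilde X/X}\cH)^\Gamma$ is \emph{smooth} over $X$ whenever $Res_{\tilde X/X}\cH$ is. This is what guarantees that the Lie-algebra sheaf is locally free, as required in the applications in \S\ref{btgpX}. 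So the hypothesis is present for the downstream uses rather than for the bare isomorphism of sheaves you have established.
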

\begin{proof} See \cite[Page 533]{cgp} and \cite[3.1, page 293]{edix} respectively.
\end{proof}

\subsubsection{Restriction of $\mathcal{R}$ to infinitesimal standard curves $U_{I}$} Let $U_{_I}$ be the formal neighbourhood of the standard curve $C_{_I}$ \S \eqref{constructionR} at its closed point. We call the corresponding dominant $1$-PS $\lambda$ of $T_{_{\text{ad}}}$ to be {\it standard}. Let $\theta_{_{\lambda}}$ be the point in $\mathbf{a}_0$ \eqref{thetalambdal+1}.

Recall that by \cite[Proposition 5.1.2]{base} for each $\theta_{_\lambda}$ there exists a ramified cover $q_{_\lambda}: U'_{_\lambda} \to  U_{_\lambda}$ (of ramification index $d$ as in \eqref{dtheta}) together with a $\Gamma_{_\lambda}$-equivariant $G$-torsor $E_{_\lambda}$ such that the adjoint group  scheme $\cH_{_\lambda}=E_{_\lambda}(G)$ has simply-connected fibers isomorphic to $G$ and we have the identification of $U_{_\lambda}$-group schemes:
\beqa
\big(\text{Res}_{_{U'/U}}(\cH_{_\lambda})\big)^{^{\Gamma_{_\lambda}}} \simeq  \cG^{^{st}}_{_{\theta_\lambda}}.
\eeqa

We therefore get the following useful corollary to \eqref{gpschLiestab}.
\begin{Cor}\label{gpschLiestab3} For any standard dominant $1$-PS $\lambda$ of $T_{_{ad}}$, with notations as above we have an isomorphism as sheaves of Lie algebras:

\begin{equation} \label{Liestrendow} \mathcal{R}|_{_{U_{_\lambda}}} \simeq q^{^{\Gamma}}_{_{\lambda,*}}(E_{_\lambda}(\mathfrak g)).
\end{equation}
 
\end{Cor}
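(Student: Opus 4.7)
The plan is to chain together three ingredients already in hand: Theorem \ref{gpschLiestab}, which identifies $L^+(\mathcal R|_{_{U_{_\lambda}}})$ with $L^+(\mathfrak P^{^{st}}_{_{\theta_{_\lambda}}})$; the identity $\text{Lie}(\cG^{^{st}}_{_{\theta_{_\lambda}}}) \simeq \mathfrak P^{^{st}}_{_{\theta_{_\lambda}}}$ coming from \eqref{parahorLiealgrel} and \eqref{relnstdnonstd}; and the presentation $\cG^{^{st}}_{_{\theta_{_\lambda}}} \simeq \big(\text{Res}_{_{U'/U}}(\cH_{_\lambda})\big)^{^{\Gamma_{_\lambda}}}$ recalled from \cite[Proposition 5.1.2]{base}.

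The first step is to observe that Theorem \ref{gpschLiestab}, combined with \eqref{parahorLiealgrel}, gives an isomorphism of Lie-algebra sheaves
\begin{equation*}
\mathcal R|_{_{U_{_\lambda}}} \simeq \text{Lie}(\cG^{^{st}}_{_{\theta_{_\lambda}}})
\end{equation*}
on $U_{_\lambda}$. Strictly speaking Theorem \ref{gpschLiestab} was phrased at the level of the associated $L^+$-functors, but since both sides come from coherent sheaves on the formal disc this pins down the underlying sheaves (the $L^+$ construction is fully faithful on the finite-type data over $U_{_\lambda}$). After substituting the presentation from \cite{base}, it now suffices to establish a natural isomorphism
\begin{equation*}
\text{Lie}\bigl((\text{Res}_{_{U'/U}}\cH_{_\lambda})^{^{\Gamma_{_\lambda}}}\bigr) \simeq q^{^{\Gamma}}_{_{\lambda,*}}\bigl(E_{_\lambda}(\gfr)\bigr).
\end{equation*}

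For this, I would apply Lemma \ref{Weilrestriction} (Galois form, valid in characteristic zero) to the affine group scheme $\cH_{_\lambda} \to U'_{_\lambda}$ to get
\begin{equation*}
\text{Lie}\bigl((\text{Res}_{_{U'/U}}\cH_{_\lambda})^{^{\Gamma_{_\lambda}}}\bigr) \simeq \bigl(\text{Res}_{_{U'/U}}\text{Lie}(\cH_{_\lambda})\bigr)^{^{\Gamma_{_\lambda}}}.
\end{equation*}
Since $\cH_{_\lambda} = E_{_\lambda}(G)$ is obtained from the $G$-torsor $E_{_\lambda}$ by the adjoint action, its Lie algebra is the associated vector bundle $E_{_\lambda}(\gfr)$. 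Finally, for the finite flat cover $q_{_\lambda}: U'_{_\lambda} \to U_{_\lambda}$, the Weil restriction of a quasi-coherent sheaf agrees with the usual direct image, so $\text{Res}_{_{U'/U}}(E_{_\lambda}(\gfr)) \simeq q_{_{\lambda,*}}(E_{_\lambda}(\gfr))$, and passing to $\Gamma_{_\lambda}$-invariants yields $q^{^{\Gamma}}_{_{\lambda,*}}(E_{_\lambda}(\gfr))$.

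The main obstacle is mostly bookkeeping: verifying that the identifications above are compatible as \emph{sheaves of Lie algebras} (not merely of $\cO_{_{U_{_\lambda}}}$-modules) and that passing from the $L^+$-functor statement of Theorem \ref{gpschLiestab} to an honest sheaf-level isomorphism is legitimate. Both points are formal; the first is inherited from the naturality of the $\text{Lie}$ functor applied to \eqref{Lieaffinegpsch}, and the second from the fact that $L^+$ is representable and faithful on the category of flat affine group schemes of finite type over the formal disc. Characteristic-zero hypothesis enters only through Lemma \ref{Weilrestriction}.
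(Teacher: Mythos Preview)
Your proposal is correct and follows the same route as the paper, just with the details spelled out: the paper's proof is the single line ``This is an immediate consequence of \eqref{gpschLiestab} and \cite[Proposition 5.1.2]{base},'' and what you have written is precisely the unpacking of that sentence (including the use of Lemma \ref{Weilrestriction} to pass Lie through the invariant Weil restriction, which the paper leaves implicit here but invokes explicitly later in the proof of Theorem \ref{btoverx}).
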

\begin{proof} This is an immediate consequence of  \eqref{gpschLiestab}  and \cite[Proposition 5.1.2]{base}. \end{proof}

All results in this subsection generalize suitably to non-standard curves corresponding to dominant $1$-PS in $T_{_{\text{ad}}}$ by \eqref{gendomlambda}.

\subsubsection{The Lie algebra bundle $\mathcal R$ on $\bf X$}\label{atthehyper}
Let $\{ D_{\alpha} | \alpha \in S \}$ denote the irreducible smooth {\em boundary divisors} of ${\bf X}$. Set $H_{_\alpha} : = D_{_\alpha} \cap {\bf Y}_{_0}$ for each $\alpha$. Recall that $Z:= {\bf Y}_{_0} \setminus T_{_{\text{ad}}}$ is a union $\cup_{_{\alpha \in S}} H_{_\alpha}$  which are $\ell $ smooth {\em hyperplanes} meeting at simple normal crossings.     For $\alpha \in S$, let $\zeta_{_\alpha}$'s denote the generic points of the divisors $H_{_\alpha}$'s. Let  
\begin{equation} \label{Aalpha0}
A_{_\alpha}= \mathcal O_{_{{{\bf Y}_{_0}}, \zeta_{_{\alpha}}}}
\end{equation} 
be the dvr's obtained by localizing at the height $1$-primes given by the $\zeta_{_\alpha}$'s and let $Y_{_\alpha} := \spec(A_{_\alpha})$. Base changing by the local morphism $Y_{_\alpha} \to \bf Y_{_0}$, we have a Lie algebra bundle $\mathcal R|_{_{Y_{_\alpha}}}$ for each $\alpha$. Moreover,  the Lie algebra bundle $\mathcal R$ on ${\bf Y}_{_0}$  gives  canonical gluing data to glue $\mathcal R|_{_{Y_{_\alpha}}}$ with the trivial bundle $\mathfrak g \times T_{_{\text{ad}}}$.

Let  $\xi_{_\alpha} \in D_{_\alpha}, \alpha \in S$ denote the generic points of the divisors $D_{_\alpha}$'s and let  
\begin{equation}
B_{_\alpha}:= \mathcal O_{_{{\bf X}, \xi_{_{\alpha}}}}
\end{equation} 
be the dvr's obtained by localizing at the height $1$-primes given by the $\xi_{_\alpha}$'s.
By a transport of structures, for each $\alpha$ the gluing datum of $\mathcal R|_{_{Y_{_\alpha} \cap T_{_{\text{ad}}}}}$, gives a Lie-algebra bundle gluing datum on $\spec(B_{_\alpha}) \cap G_{_{\text{ad}}}$.  Thus, the gluing data of the bundle $\mathcal R$ on ${\bf Y}_{_0}$ at the  the $\zeta_{_\alpha}$'s can be now used to extend the trivial bundle $\mathfrak g \times G_{_{\text{ad}}}$ to the $\xi_{_\alpha}$'s as a locally free sheaf of Lie algebras. The rest of the proof is as for $\bf Y$, together with the observation that the $G \times G$-translates of the open subset $U \times U^{^-} \times {\bf Y}_{_0}$ cover $\bf X$, where the bundle is simply the pull-back from ${\bf Y}_{_0}$. Thus we have: 
\begin{Cor} \label{gpschLiestab2} There exists a canonical Lie-algebra bundle $\mathcal{R}_{_{\bf X}}$ on ${\bf X}$ which extends the trivial bundle with fiber $ \mathfrak g$ on ${G_{_{\text{ad}}}}  \subset {\bf X}$ with properties as in \eqref{gpschLiestab}. 
\end{Cor}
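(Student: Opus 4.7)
The plan is to extend the Lie-algebra bundle $\mathcal{R}$ from the toric chart ${\bf Y}_{_0}$ to all of ${\bf X}$ by gluing, following the outline of \S\ref{atthehyper}. First, for each $\alpha \in S$, I would use the localization $\mathcal{R}|_{_{Y_{_\alpha}}}$ at the generic point $\zeta_{_\alpha}$ of the hyperplane $H_{_\alpha} \subset {\bf Y}_{_0}$. This is a Lie-algebra bundle on $\spec(A_{_\alpha})$ agreeing generically with the trivial bundle $\mathfrak g$. Since $\zeta_{_\alpha}$ lies in the $(G_{_{\text{ad}}} \times G_{_{\text{ad}}})$-stratum that is open-dense in $D_{_\alpha}$, the $(G \times G)$-action transports this local data from $\zeta_{_\alpha}$ to analogous data at the generic point $\xi_{_\alpha}$ of $D_{_\alpha}$. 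This produces a Lie-algebra bundle on $\spec(B_{_\alpha})$ whose restriction to $\spec(B_{_\alpha}) \cap G_{_{\text{ad}}}$ is trivialized.

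Next, I would patch the trivial Lie-algebra bundle $\mathfrak g \times G_{_{\text{ad}}}$ with these local extensions at each $\xi_{_\alpha}$ to obtain a coherent Lie-algebra subsheaf $\mathcal{R}'$ of the constant sheaf $\mathfrak g(K_{_{\bf X}})$ on the open set
\[
V := G_{_{\text{ad}}} \cup \bigcup_{\alpha \in S} \spec(B_{_\alpha}) \subset {\bf X},
\]
whose complement in ${\bf X}$ has codimension $\geq 2$. Lemma~\ref{langton} ensures that each local patching along $\xi_{_\alpha}$ is canonical, and Lemma~\ref{langton1} then extends $\mathcal{R}'$ uniquely to a reflexive sheaf $\mathcal{R}_{_{\bf X}}$ on ${\bf X}$; the Lie bracket, being already defined on $V$ whose complement is of codimension $\geq 2$, extends by reflexivity.

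To verify local freeness and the classifying property, I would exploit the fact that ${\bf X}$ is covered by the $(G \times G)$-translates of the open subset $U \times U^{^-} \times {\bf Y}_{_0}$. By construction and $(G \times G)$-equivariance, on each such chart $\mathcal{R}_{_{\bf X}}$ agrees with the pull-back of $\mathcal{R}$ from ${\bf Y}_{_0}$ via the third projection, which is locally free of rank $\dim \mathfrak g$ by Theorem~\ref{gpschLiestab}. Hence $\mathcal{R}_{_{\bf X}}$ is locally free globally. For the classifying property: given a standard transversal curve $C_{_I}$ meeting $Z_{_I}$ at $z_{_I}$, the $(G \times G)$-action moves $C_{_I}$ inside ${\bf Y}_{_0}$ (using the description of $z_{_I}$ as the limit of the $1$-PS built from fundamental coweights indexed by $I$), and there the desired identification with $\cG^{^{st}}_{_I}$ follows directly from equation \eqref{loopfunct}.

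The main obstacle will be establishing the $(G \times G)$-equivariance of the transport step unambiguously: one must verify that the local datum obtained at $\xi_{_\alpha}$ does not depend on the choice of group element realizing the transport from $\zeta_{_\alpha}$, and that the resulting gluing datum is consistent on overlaps of the cover of ${\bf X}$ by the $(G \times G)$-translates of the toric chart $U \times U^{^-} \times {\bf Y}_{_0}$, so that $\mathcal{R}_{_{\bf X}}$ is a genuine global Lie-algebra bundle rather than a non-trivial twisted form. This reduces to checking the compatibility of the Weyl-group translates of $\mathcal{R}$ on ${\bf Y}$ already observed in Corollary~\ref{gpschLiestab1}, extended to the full $(G \times G)$-equivariant setting.
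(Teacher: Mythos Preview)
Your proposal is correct and follows essentially the same route as the paper's own argument in \S\ref{atthehyper}: transport the gluing data of $\mathcal R$ from the generic points $\zeta_{_\alpha}$ of $H_{_\alpha}\subset{\bf Y}_{_0}$ to the generic points $\xi_{_\alpha}$ of $D_{_\alpha}\subset{\bf X}$, patch with the trivial bundle on $G_{_{\text{ad}}}$, extend across codimension~$\geq 2$ as for $\bf Y$, and verify local freeness using that on each $(G\times G)$-translate of $U\times U^{^-}\times{\bf Y}_{_0}$ the bundle is the pull-back of $\mathcal R$ from ${\bf Y}_{_0}$. Your explicit invocation of Lemmas~\ref{langton} and~\ref{langton1} and your flagging of the equivariance check on overlaps make precise what the paper leaves as ``transport of structures'' and ``the rest of the proof is as for $\bf Y$''.
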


\subsubsection{Bruhat-Tits group scheme ${\mathcal G}_{_{\bf X}}^{^{\varpi}}$ on $\bf X$} 
  Let $L_{_\alpha}$ be the quotient field of $B_{_\alpha}$. Let $X_{_\alpha}:= \spec(B_{_\alpha}) $. By \eqref{gpschLiestab3}, for each $\alpha$ there exist  a $\Gamma_{_\alpha}$-equivariant $G$-torsor $E_{\alpha}$ on a ramified cover $q_{_\alpha}: X'_{_\alpha} \to  X_{_\alpha}$ such that the adjoint group  scheme $\cH_{_\alpha}=E_{\alpha}(G)$ has simply-connected fibers isomorphic to $G$ and we have the identification of $B_{_\alpha}$-group schemes:
\beqa\label{invdi}
\big(\text{Res}_{_{X'_{_\alpha}/X_{_\alpha}}}(\cH_{_\alpha})\big)^{^{\Gamma_{_\alpha}}} \simeq  \cG^{^{st}}_{_{\theta{_{_\alpha}}}}.
\eeqa
Before stating  the main result of this section,  we make a few remarks which might help the reader. The basic underlying principle in these constructions
is that the combinatorial data encoded in the triple consisting of the Weyl chamber, the fan of Weyl chambers, and the Tits building,   is geometrically replicated by the inclusion ${\bf Y_{_0}} \subset {\bf Y} \subset \bf X$. The "wonderful" Bruhat-Tits group scheme which arises on $\bf X$ has its local Weyl-chamber model on the affine toric variety $\bf Y_{_0}$. Indeed, in this case, the Kawamata cover is even explicit \eqref{torickawamata}. In particular, the theorem below, can be executed for ${\bf Y}_{_0}$ but this will give the group scheme associated to the data coming from the Weyl chamber alone.  
\begin{thm}\label{btoverx}  There exists an affine ``wonderful" Bruhat-Tits group scheme ${\mathcal G}_{_{\bf X}}^{^{\varpi}}$ on $\bf X$ satisfying the following classifying properties. 
\begin{enumerate} 
\item There is an identification of the Lie-algebra bundles $\text{Lie}({\mathcal G}_{_{\bf X}}^{^{\varpi}}) \simeq \mathcal{R}_{_{\bf X}}$.

\item For $\emptyset \neq I \subset S$ the restriction of ${\mathcal G}_{_{{\bf X}}}^{^{\varpi}}$ to  the formal neighbourhood $U_{_{z_{_I}}}$ of $z_{_I}$ in $C_{_I}$ \S \eqref{constructionR} is isomorphic to the standard Bruhat-Tits group scheme $\cG^{^{st}}_{_{I}}$ \S \eqref{sbtgpsch}.
\end{enumerate}  
\end{thm}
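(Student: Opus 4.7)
The plan is to build $\cG_{_{\bf X}}^{^{\varpi}}$ by globalizing the local Weil-restriction description \eqref{invdi} in the same local-to-global manner that Corollary \ref{gpschLiestab2} globalizes Theorem \ref{gpschLiestab}. On the open dense orbit $G_{_{\text{ad}}} \subset {\bf X}$, the group scheme is the trivial $G$-group scheme, and its Lie algebra is the trivial summand of $\cR_{_{\bf X}}$. The nontrivial content lies in extending this smoothly across the divisors $D_{_\alpha}$ and their intersections, and the central mechanism is again the toric Kawamata cover $p:\spec(B^{+}) \to \spec(B_{_0}^{+})$ of Remark \ref{torickawamata}.

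The construction proceeds in three stages. First, on the affine toric chart ${\bf Y}_{_0}$, one takes the trivial $G$-torsor on $\spec(B^{+})$ together with its tautological action of the Galois group $\Gamma$ (the character group of the quotient lattice in \eqref{latticeinclusion}) and sets
$$\cG_{_{{\bf Y}_{_0}}}^{^{\varpi}} := \bigl(\text{Res}_{_{\spec(B^{+})/\spec(B_{_0}^{+})}}\bigl(\spec(B^{+}) \times G\bigr)\bigr)^{\Gamma}.$$
This is a smooth affine group scheme over ${\bf Y}_{_0}$; by Lemma \ref{Weilrestriction} its Lie algebra is the $\Gamma$-invariant Weil restriction of the trivial $\gfr$-bundle on $\spec(B^{+})$, which by the proof of Theorem \ref{gpschLiestab} is exactly $\cR|_{_{{\bf Y}_{_0}}}$. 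Moreover, for each standard $1$-PS $\lambda$, Corollary \ref{gpschLiestab3} identifies the restriction of $\cG_{_{{\bf Y}_{_0}}}^{^{\varpi}}$ to the formal neighbourhood $U_{_{\lambda}}$ with $\cG^{^{st}}_{_{\theta_{_\lambda}}}$. Second, I transport this by $W$-translates to cover the projective toric ${\bf Y}$, using that $W$ permutes the affine charts ${\bf Y}_{_w}$ and acts compatibly on their Kawamata covers. Third, the open cover of ${\bf X}$ by $(G \times G)$-translates of $U \times U^{^{-}} \times {\bf Y}_{_0}$ is used to define the group scheme on each chart as the pull-back of $\cG_{_{{\bf Y}_{_0}}}^{^{\varpi}}$ along the projection onto the third factor (trivially extended along $U \times U^{^{-}}$). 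The gluing on overlaps is forced, as in the proof of Theorem \ref{gpschLiestab}, by the observation that over the open part $T_{_{\text{ad}}} \subset {\bf Y}_{_0}$ the canonical isomorphism with the trivial group scheme $G \times T_{_{\text{ad}}}$ is given by conjugation through $\text{Ad}\circ \mathfrak r$, and this intertwines the local charts canonically.

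The two classifying properties are then immediate. Property (1) follows from Lemma \ref{Weilrestriction} applied chart-by-chart, combined with the identification of the Lie algebra on each chart with the corresponding restriction of $\cR_{_{\bf X}}$ already established in Corollary \ref{gpschLiestab2}. For property (2), given $\emptyset \neq I \subset S$, the formal neighbourhood $U_{_{z_{_{I}}}}$ of $z_{_{I}}$ along the standard transversal curve $C_{_{I}}$ sits inside ${\bf Y}_{_0}$, and the restriction of the global toric Kawamata cover to $U_{_{z_{_{I}}}}$ is exactly the ramified $A$-cover of ramification $d_{_I}=e_{_{I}} c_{_{I}}$ used in \cite{base} to construct $\cG^{^{st}}_{_{I}}$; hence the restricted Weil restriction coincides tautologically with $\cG^{^{st}}_{_{I}}$. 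Uniqueness of Bruhat-Tits group schemes given the generic fibre and parahoric Lie algebra closes the identification.

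The main obstacle is the global extension from the codimension-one locus $V := G_{_{\text{ad}}} \cup \bigcup_{_\alpha} X_{_\alpha}$ to all of ${\bf X}$, since, unlike reflexive sheaves, smooth affine group schemes are not automatically extendible across codimension $\geq 2$ subsets, so Lemma \ref{langton1} cannot be invoked directly. The way around this is precisely that on ${\bf Y}_{_0} \simeq {\mathbb A}^{^{\ell}}$ we perform the construction in one stroke from the \emph{single} global toric Kawamata cover of Remark \ref{torickawamata}, which automatically produces a smooth affine group scheme on all of ${\bf Y}_{_0}$, covering the deeper toric strata where several $H_{_\alpha}$ meet; the $W$-action and the $(G \times G)$-equivariant cover of ${\bf X}$ then port this across the whole wonderful compactification. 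The most delicate point is verifying cocycle compatibility of the gluing data on triple overlaps of the $(G \times G)$-translates of $U \times U^{^{-}} \times {\bf Y}_{_0}$, but this is forced by the uniqueness of a smooth affine group scheme with prescribed generic fibre and parahoric Lie algebra at each height-one prime.
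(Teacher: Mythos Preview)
Your strategy is genuinely different from the paper's. The paper does \emph{not} build $\cG_{_{\bf X}}^{^\varpi}$ by gluing the explicit toric covers of the ${\bf Y}_{_0}$-charts. Instead it uses the parabolic structure on $\cR_{_{\bf X}}$ (Corollary \ref{gpschLiestab3}) together with Biswas's correspondence to produce a \emph{single} global Kawamata cover $p:Z\to{\bf X}$ and a $\Gamma$-equivariant bundle $V$ with $p_{_*}^{^\Gamma}(V)\simeq\cR_{_{\bf X}}$. The crucial move is then on $Z$: one first has an $\text{Aut}(G)$-torsor $\cE^\circ$ on $Z':=p^{-1}(X')$; the Lie bracket on $V'=\text{Lie}(\cE^\circ(G))$ extends to all of $V$ by Hartogs plus nondegeneracy of the Killing form, and rigidity of semisimple Lie algebras forces every fibre of $V$ to be $\mathfrak g$. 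Since $\text{Aut}(\mathfrak g)=\text{Aut}(G)$, the transition functions of $V$ already live in $\text{Aut}(G)$, so $\cE^\circ$ extends to an $\text{Aut}(G)$-torsor $\cE$ on all of $Z$, and one sets $\cG_{_{\bf X}}^{^\varpi}:=(\text{Res}_{_{Z/{\bf X}}}(\cE\times^{^{\text{Aut}(G)}}G))^{^\Gamma}$. The extension across codimension $\geq 2$ is thus done once, on $Z$, for a genuine $\text{Aut}(G)$-torsor, and no chart-by-chart gluing of group schemes on ${\bf X}$ is ever performed.

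Your argument has two real gaps. First, the ``tautological action of $\Gamma$'' on the trivial $G$-bundle over $\spec(B^{+})$ is never specified; the map $\Gamma\hookrightarrow{\bf T}\xrightarrow{\mathfrak r}T_{_{\text{ad}}}\xrightarrow{Ad}\text{Aut}(G)$ is what you need, and one must then check that the resulting invariant Weil restriction of $\mathfrak g$ really equals $\cR$ as defined by \eqref{sheafofR} --- this is true, but it is not stated or proved anywhere in the paper and your sentence ``by the proof of Theorem \ref{gpschLiestab}'' does not cover it. Second, and more seriously, your gluing over ${\bf X}$ is only asserted. On an overlap of two $(G\times G)$-translates of $U\times U^{-}\times{\bf Y}_{_0}$, the two pulled-back group schemes are invariant direct images along two \emph{different} covers of the overlap; to compare them you must pass to a common refinement and invoke $\text{Aut}(\mathfrak g)=\text{Aut}(G)$ to promote the known isomorphism of Lie-algebra bundles (namely $\cR_{_{\bf X}}$) to an isomorphism of equivariant group schemes upstairs --- exactly the mechanism the paper develops later in Theorem \ref{mtY}. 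Your appeal to ``uniqueness of a smooth affine group scheme with prescribed generic fibre and parahoric Lie algebra at each height-one prime'' is not a theorem on record and, as stated, does not control the deep strata where several $D_{_\alpha}$ meet; it is precisely this codimension-$\geq 2$ difficulty that the paper's global-cover-plus-$\text{Aut}(\mathfrak g)=\text{Aut}(G)$ argument is designed to handle.
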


\begin{proof}
Let $X_{_\alpha}$ be as above. Note that we can identify the open subset $\spec (L_{_\alpha})$ with  ${G_{_{\text{ad}}}} \cap X_{_\alpha}$. Set 
\begin{equation} X':=G_{_{\text{ad}}} \cup_{\alpha \in S} X_{_\alpha}.
\end{equation} Hence ${\bf X} \setminus X'$ is a colimit of closed subschemes of $\bf X$ of codimension at least $2$. Consider the {\em fpqc} morphism:
\begin{equation}
 G_{_{_{\text{ad}}}} \bigsqcup_{\alpha \in S } X_{_\alpha} \to {X'}.
\end{equation}
We restrict $\mathcal R = \mathcal{R}_{_{\bf X}}$ further to $X'$. By \eqref{gpschLiestab2} ,  over the open subset $G_{_{\text{ad}}} \subset X'$, we have $\mathcal R \simeq G_{ad} \times \mathfrak{g}$. Further, the transition functions of $\mathcal R$ on the  intersections $\spec(L_{_\alpha}):=G_{_{_{\text{ad}}}} \cap \spec(B_{_\alpha})$ take values in $\text{Aut}(\gfr)$. Since $\text{Aut}(\gfr)=\text{Aut}(G)$, it follows that the effective descent datum provided by pulling back $\mathcal R$ to $G_{_{_{\text{ad}}}} \bigsqcup_{\alpha \in S } X_{_\alpha}$ gives a descent datum to glue the trivial group  scheme $G \times G_{_{_{\text{ad}}}}$ on $G_{_{_{\text{ad}}}}$ with $\cG^{^{st}}_{_{\alpha}}$ on $ \spec(B_{_\alpha})$ along $\spec(L_{_\alpha})$. Since the group schemes are affine, this descent datum is also effective. In other words, 
we get the group scheme
\begin{equation} 
\cG^{\circ} \ra X'.
\end{equation}
This can also be seen without ``descent" theory as follows. Since the group schemes $\cG^{^{st}}_{_{\alpha}}$ on $ \spec(B_{_\alpha})$ are of finite type, they can be extended to an affine subscheme $\bf X_{_f}$. By a further shrinking of this neighbourhood of $\xi_{_\alpha}$ one can glue it to $G \times G_{_{_{\text{ad}}}}$ along the intersection. So we can think of $X'$ as an honest open subset of $X$ with complement of codimension at least $2$.

By \eqref{gpschLiestab3}, the Lie algebra bundle $\mathcal R$ gets canonical parabolic structures at the generic points $\xi_{_\alpha}$ of the divisors $\{D_{_\alpha}\}_{_{\alpha \in S}}$.
For a parabolic vector bundle  with prescribed rational weights such as $\mathcal R^{^{}}$, by \cite{biswas} (see \S\ref{parabstuff}) we get the following data:
\begin{itemize} 
\item a global Kawamata cover (\ref{kawa}) $p: Z \ra {\bf X}$ ramified over $D$ with ramification prescribed by the weights $\{d_{_\alpha}\}$ \eqref{dalpha}, with Galois group $\Gamma$ which ``realizes the local ramified covers $q_{_\alpha}$ at the points $\xi_{_\alpha}$" i.e. the isotropy subgroup of $\Gamma$ at $\xi_{_\alpha}$ is $\Gamma_{_{\alpha}}$ and 
\item an equivariant vector bundle $V$ on $Z$ such that 
\begin{equation}
p_{_*}^{\Gamma} (V) \simeq \mathcal R^{^{}}.
\end{equation}
\end{itemize}

Let $Z':=  p^{-1}(X')$ and $V' := V|_{_{Z'}}$. Gluing the trivial $G$-torsor  with the $\{E_{\alpha} \}$ by the transition functions of $V'$, we make a $\Gamma$-equivariant principal $\text{Aut}(G)$-torsor $\cE^{\circ}$. Its associated group scheme $\cE^{\circ} \times ^{^{\text{Aut}(G)}} G = \cH^\circ$ is a now group scheme on $Z'$ with simply-connected fibers  $G$, albeit with transition functions in $\text{Aut}(G)$. Moreover, we have 
\begin{equation}
\big(\text{Res}_{_{Z'/X'}}(\cH^{\circ})\big)^{^{\Gamma}}  \simeq \cG^{\circ}.
\end{equation}
Further, as locally-free sheaves, we also have
\beqa
\text{Lie}(\cH^{\circ})=V'.
\eeqa
We transport the structure of Lie-bracket from $\text{Lie}(\cH^\circ)$ to $V'$ on $Z'$. This is non-degenerate everywhere on $Z'$ since $\cH^\circ$ is the adjoint group scheme of $\cE^{\circ}$. Hence $V'$ has fiber type $\mathfrak g$. Since $\text{codim}(Z \setminus Z') \geq 2$, the Lie bracket $[.,.]$ on $V'$ extends to a Lie bracket on the locally free sheaf $V$ with the Killing form being non-degenerate on the whole of $Z$. {\sl In other words $V$ is now a locally free sheaf of Lie algebras on the whole of $Z$ with semisimple fibres; these fibres are isomorphic to the Lie algebra $\mathfrak g$ by the rigidity of semisimple Lie algebras}.

We now wish to extend $\cE^{\circ}$ as an equivariant $\text{Aut}(G)$-torsor $\cE$ on the whole of $Z$ such that its associated Lie-algebra bundle $\cE(\mathfrak g) := \cE \times ^{^{\text{Aut}(G)}} \mathfrak g$ becomes isomorphic $V$. Since $G$ is simply-connected, making the  identification $\text{Aut}(\gfr)=\text{Aut}(G)$, we see that the transition functions of $V$ give the gluing for defining $\cE$ on $Z$ satisfying our desired requirements.

Let $\cH := \cE \times ^{^{\text{Aut}(G)}} G$ denote the group scheme associated to the adjoint group scheme of $\cE$. This is an equivariant group scheme on $Z$ and we define
\beqa\label{justnow}
{\mathcal G}_{_{\bf X}}^{^{\varpi}}:= \big(\text{Res}_{_{Z/{\bf X}}}(\cH)\big)^{^{\Gamma}}.
\eeqa

\noindent
Then ${\mathcal G}_{_{\bf X}}^{^{\varpi}}|_{{X'}}=\cG^\circ$. Since by Lemma \eqref{Weilrestriction}, the functor ``invariant direct image" commutes with taking Lie algebras, we moreover get isomorphisms of locally-free sheaves of Lie algebras
\begin{equation}\label{liestr's}
\text{Lie}({\mathcal G}_{_{\bf X}}^{^{\varpi}}) \simeq p_{_*}^{\Gamma} (\cE(\mathfrak g)) \simeq  \mathcal R^{^{}}.
\end{equation}
This proves the first claim in the theorem.

Finally, let us verify the classifying property of the group scheme $\cH$. For $\alpha \in S$, at the closed points $z_{_\alpha}$ of the curves $\spec(A_{_\alpha})$ we have $\cH|{_{\spec(A_{_\alpha})}} \simeq \cH_{_{\alpha}}$. The classifying property is tautologically valid here because this was designed expressly  in \eqref{invdi}.

For the closed points $z_{_\lambda}$ of $C_{_\lambda}$ corresponding to strata of lower dimension, we proceed as follows. Consider the base change of the Kawamata cover $p: Z \ra {\bf X}$ to the curve $C_{_\lambda} \subset {\bf X}$ and further to the formal neighbourhood $U_{_{z_{_\lambda}}} \subset C_{_\lambda}$ of $z_{_\lambda}$.

Let $p_{_z}:W_{_{z}} \to U_{_z}$ be the restriction of $p$ to a connected component of $Z \times_{_{U_{_z}}} {\bf X}$. Then, $p_{_z}$ gives a Galois cover with Galois group some cyclic group  $\mu_{_d} \subset \Gamma$ of order $d$.

By \eqref{justnow}, the restriction ${\mathcal G}_{_{{\bf Y}}}^{^{\varpi}}|_{_{U_{_{z}}}}$ is the ``invariant direct image" of  $\cH|_{_{W_{_{z}}}}$. 
Also, we have the isomorphism $\mathcal R|_{_{U_{_{z}}}} \simeq p_{_*}^{\mu_{_d}}(\cE(\mathfrak g)_{_{W_{_{z}}}})$. Further, by \eqref{gpschLiestab3},  $L^+(\mathcal R|_{_{U_{_{z_{_\lambda}}}}}) \simeq L^+(\mathfrak P^{^{std}}_{_{\theta_{_\lambda}}})$.

By \eqref{gpschLiestab3}, we have another ramified cover $q_{_\lambda}:U'_{_z} \to U_{_z}$, but now with Galois group $\Gamma_{_\lambda}$ and {\sl an equivariant $(\Gamma_{_\lambda},G)$-torsor $E_{_\lambda}$ on $U'_{_z}$}, such that $q^{^{\Gamma_{_\lambda}}}_{_{*}}(E_{_\lambda}(G)) \simeq \cG^{^{st}}_{_{\theta_{_\lambda}}}$ and 
$q^{^{\Gamma_{_\lambda}}}_{_{*}}(E_{_\lambda}(\mathfrak g)) \simeq \mathcal R|_{_{U_{_{z_{_\lambda}}}}}$.

To finish the proof, we need to show the following isomorphism of group schemes:
\begin{equation}\label{appadi}
{\mathcal G}_{_{{\bf X}}}^{^{\varpi}}|_{_{U_{_{z}}}} \simeq  \cG^{^{st}}_{_{\theta_{_\lambda}}}.
\end{equation}

 Over a common cover of $U'_{_z}$ and $W_{_z}$ (which will continue to be a Kawamata cover of $U_{_z}$), we can identify the pull-back of Lie sheaves $E_{_\lambda}(\mathfrak g)$ and $\cE(\mathfrak g) |_{_{W_{_{z}}}}$ as equivariant Lie sheaves since both give invariant direct images isomorphic to $\mathcal R|_{_{U_{_{z_{_\lambda}}}}}$. Therefore, on the same common cover we have an identification of pull-backs of $E_{_{\lambda}} \times^{G} \mathfrak{g}$ with $\cE(\mathfrak g) |_{_{W_{_{z}}}}=\mathcal{E} \times^{Aut(G)} \mathfrak{g} |_{_{W_{_{z}}}}$. Hence we have an identification of pull-backs of $E_{_\lambda} \times^{G} G$ with $\mathcal{E} \times^{Aut(G)} G |_{_{W_{_{z}}}}$ as equivariant group schemes with simply-connected fibers. Now the invariant direct image of the first group scheme is a standard parahoric group scheme because the curve $C_{_\lambda}$ is standard. On the other hand, the second group scheme is $\cH|_{_{W_{_z}}}$ whose invariant direct image by construction is  ${\mathcal G}_{_{{\bf X}}}^{^{\varpi}}|_{_{U_{_{z}}}}$. Thus, we have proven   \eqref{appadi}.

\end{proof}

\begin{rem}\label{pabsolute} The above construction goes through without change over an algebraically closed field $k$ of characteristic $p$ coprime to the $d_{_\alpha}$'s \eqref{dalpha}. The existence of $\bf X$ is known from the works of Strickland \cite{El} and De Concini-Springer \cite{DS} and Kawamata covering works under the above conditions on characteristics.
\end{rem}
\section{The Weyl alcove and apartment case}\label{loopcase}

We continue to use the notations as in previous sections.  Let $G^{^{\text{aff}}}$ denote the Kac-Moody group associated to the affine Dynkin diagram of $G$. Recall that $G^{^{\text{aff}}}$ is given by a  central extension of $L^{\ltimes}G$ by $\GG_m$. Analogous to  the wonderful compactification of $G_{_{_{\text{ad}}}}$, P. Solis in \cite{solis} has constructed a wonderful embedding ${\bf X}^{^{\text{aff}}}$ for $G^{^{\text{aff}}}_{_{\text{ad}}}:=G^{^{\text{aff}}}/Z(G^{^{\text{aff}}})=\GG_m \ltimes LG/Z(G)$.  It is an ind-scheme containing $G^{^{\text{aff}}}_{_{\text{ad}}}$ as a dense open ind-scheme and carrying an equivariant action of $L^{\ltimes} G \times L^{\ltimes} G$. 

Let $T_{_{\text{ad}}} := T/Z(G)$, $T^{\ltimes}_{_{\text{ad}}}:= \GG_m \times T_{_{\text{ad}}} \subset G^{^{\text{aff}}}_{_{\text{ad}}}$, where $\GG_m$ is the rotational torus. In ${\bf X}^{^{\text{aff}}}$, the closure ${\bf Y}^{^{\text{aff}}}:=\ol{T^{\ltimes}_{_{\text{ad}}}}$ gives  a  torus-embedding. It is  covered by the  affine Weyl group $W^{^{\text{aff}}}$-translates  of the affine torus embedding ${\bf Y}_{_{0}}^{^{\text{aff}}} := \ol{T^{\ltimes}_{_{\text{ad},0}}} \simeq  \mathbb{A}^{\ell+1}$ given by the negative Weyl alcove.

\subsubsection{On the torus-embedding ${\bf Y}_{_{0}}^{^{\text{aff}}}$} Recall that $Z:= {\bf Y}_{_{0}}^{^{\text{aff}}} \setminus T^{\ltimes}_{_{\text{ad}}}$ is a union $\cup_{_{\alpha \in \mathbb S}} H_{_\alpha}$ of $\ell+1$ many standard coordinate hyperplanes meeting at normal crossings.   For $\alpha \in \mathbb S$, let the $\zeta_{_\alpha}$'s denote the generic points of the divisors $H_{_\alpha}$'s. Let  
\begin{equation} \label{Aalpha}
A_{_\alpha}= \mathcal O_{_{{\bf Y}_{_{0}}^{^{\text{aff}}}, \zeta_{_{\alpha}}}}
\end{equation} 
be the dvr's obtained by localizing at the height $1$-primes given by the $\zeta_{_\alpha}$'s. Let $K_{_\alpha}$ be the quotient field of $A_{_\alpha}$. Let $Y_{_\alpha}:= \spec(A_{_\alpha}) $. Note that we can identify the open subset $\spec (K_{_\alpha})$ with  ${T^{\ltimes}_{_{\text{ad}}}} \cap Y_{_\alpha}$. Let 
\begin{equation}\label{yprime} Y':= {T^{\ltimes}_{_{\text{ad}}}} \cup_{\alpha } Y_{_\alpha}.
\end{equation}
The complement ${\bf Y}_{_{0}}^{^{\text{aff}}} \setminus Y'$ can again be realized as a colimit of open subsets of ${\bf Y}_{_{0}}^{^{\text{aff}}}$ whose   codimension is at least $2$ in ${\bf Y}_{_{0}}^{^{\text{aff}}}$.

\subsubsection{Construction of a finite-dimensional Lie algebra bundle $J$ on ${\bf Y}_{_0}^{^{\text{aff}}}$ together with parabolic structures} This construction is exactly analogous to the construction of $\mathcal{R}$ on $\bf Y$ in \S \ref{constructionR}. We let $T^{\ltimes}_{_{\text{ad}}}$ and $\mathbb{S}$ play the role of $T_{_{\text{ad}}}$ and $S$.

More precisely, let $\lambda= \sum_{\alpha \in \mathbb{S}} k_{_\alpha} \omega_{_\alpha}^{{\vee}}$ be a non-zero dominant $1$-PS of $T_{_{\text{ad}}}^{\ltimes}$. We set
\begin{equation}
\eta_\lambda := \sum_{\alpha \in \mathbb S} k_{_\alpha} \frac{(1,\theta_{_\alpha})}{\ell+1} \quad \text{and} \quad \theta_\lambda :=  \sum_{\alpha \in \mathbb S} \frac{k_{_\alpha}}{\sum_{_{\alpha \in \mathbb{S}} k_{_{\alpha}}}} \theta_{_\alpha} 
\end{equation}
and $C_{_\lambda}$ the curve in ${\bf Y}^{^{\text{aff}}}$ defined by $\eta_\lambda$ and $U_{_\lambda}$ the formal neighbourhood of the closed point of $z_{_\lambda}$ of $C_{_\lambda}$. So if we substitute the rational number $a$  of \S \ref{loopgpsec} by the number $\frac{\sum_{\alpha \in \mathbb{S}} k_{_{\alpha}}}{\ell+1}$, we have the parahoric group identification
\begin{equation} \mathfrak P_{_{\eta_\lambda}}=\mathfrak P_{_{\theta_\lambda}}.
\end{equation}

We may prove the following theorem exactly like \eqref{gpschLiestab}. {\sl There we viewed the standard alcove as a cone over the far wall}. Note here that unlike the situation in \eqref{gpschLiestab}, we do not expect to get {\em standard} parahoric structures down all the strata. However, for strata contained in the divisor associated to $\alpha_{_0} \in \mathbb S$, the parahorics which occur will be standard as before. 
\begin{thm} \label{liealgbunonyaff}  There exists a canonical Lie-algebra bundle $J$ on ${\bf Y}_{_0}^{^{\text{aff}}}$ which extends the trivial bundle with fiber $ \mathfrak g$ on ${T^{\ltimes}_{_{\text{ad}}}}  \subset {\bf Y}^{^{\text{aff}}}$ and such that for $k_{_\alpha} \in \{0,1\}$ we have the identification of functors from the category of $k$-algebras to $k$-Lie-algebras:
\begin{equation}
L^+ (\mathfrak{P}_{_{\eta_{_\lambda}}})= L^+( \mathcal R\mid_{_{U_{_{\lambda}}}}).
\end{equation}

\end{thm}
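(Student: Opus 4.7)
The plan is to mimic the proof of Theorem \ref{gpschLiestab} verbatim, with the affine apartment data replacing the spherical data. The role played in \eqref{gpschLiestab} by the triple $(T_{_{\text{ad}}}, S, {\bf Y}_{_0} \simeq \mathbb{A}^{\ell})$ is now taken over by $(T^{\ltimes}_{_{\text{ad}}}, \mathbb S, {\bf Y}_{_0}^{^{\text{aff}}} \simeq \mathbb{A}^{\ell+1})$. There the dominant Weyl chamber was viewed as the cone with apex $0$ over the far wall of the fundamental alcove; here we work directly with the alcove itself, whose $\ell+1$ vertices $\{\theta_{_\alpha}\}_{\alpha \in \mathbb S}$ correspond to the $\ell+1$ coordinate hyperplanes $H_{_\alpha} \subset {\bf Y}_{_0}^{^{\text{aff}}}$.

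First I would set up the analogue of the lattice inclusion \eqref{latticeinclusion}: take the coweight lattice of $T^{\ltimes}_{_{\text{ad}}}$ (naturally indexed by $\mathbb S$ via the alcove vertices) and include it into the finer lattice spanned by $\theta_{_\alpha}/(e_\alpha(\ell+1))$ for $\alpha \in \mathbb S$. Dualising, this produces an inclusion of coordinate rings $B_{_0} \subset B$, with $B_{_0} = k[x_\alpha^{\pm 1}]_{\alpha \in \mathbb S}$ and $B = k[y_\alpha^{\pm 1}]_{\alpha \in \mathbb S}$ tied by the same power relations $y_\alpha^{(\ell+1)d_\alpha}=x_\alpha$, and gives a ``roots'' isomorphism of tori $\mathfrak r : {\bf T} \to T^{\ltimes}_{_{\text{ad}}}$. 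Using this, define the embedding $j : \mathfrak g(B_{_0}) \hookrightarrow \mathfrak g(B)$ via $Ad \circ \mathfrak r$ and set
$$J(B_{_0}^+) := j(\mathfrak g(B_{_0})) \cap \mathfrak g(B^+),$$
where $B^+, B_{_0}^+$ are the polynomial subrings.

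Next, by Lemma \ref{langton}, $J$ is a reflexive coherent sheaf on ${\bf Y}_{_0}^{^{\text{aff}}}$ carrying the Lie bracket inherited from $\mathfrak g(B)$. The intersection respects the $T^{\ltimes}_{_{\text{ad}}}$-weight-space decomposition: on the open subset $U \subset {\bf Y}_{_0}^{^{\text{aff}}}$ whose complement has codimension $\geq 2$, the restriction $J|_U$ splits as the trivial Cartan piece plus a direct sum of invertible sheaves coming from the root decomposition, and this decomposition extends by reflexivity (via Lemma \ref{langton1} and \cite[Proposition 1.6]{hartshorne}) to the whole of ${\bf Y}_{_0}^{^{\text{aff}}}$. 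This yields local freeness of $J$.

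Finally, to identify $L^+(J|_{_{U_{_\lambda}}})$ with $L^+(\mathfrak{P}_{_{\eta_\lambda}})$ for a standard $\lambda = \sum_{\alpha \in \mathbb S} k_\alpha \omega^\vee_\alpha$ with $k_\alpha \in \{0,1\}$, one adapts the endgame of \eqref{gpschLiestab}. Writing $\boldsymbol{\theta_{_\lambda}} := \mathfrak r^{-1} \circ \theta_{_\lambda}$ and passing to a $d$-th root $w$ of the uniformiser $s$ of $U_{_\lambda}$, the function $\theta_{_\lambda}(s) = (d\theta_{_\lambda})(w)$ becomes integral, so membership of $\text{\cursive s} = Ad(\theta_{_\lambda}(s))(x_{_K})$ in $\mathfrak g(B^+)$ translates into the limit-existence condition \eqref{observation} defining $\mathfrak{P}_{_{\eta_\lambda}}(k)$; the floor-function computation \eqref{floorcondition} applied root-by-root gives the generator-wise identification, and functoriality in the test algebra $R$ promotes this to the required identity of loop functors. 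The main difference from \eqref{gpschLiestab} — and the point to watch — is that the rational point $\theta_{_\lambda}$ may now sit on a wall of $\mathbf{a}_0$ (namely the affine wall at $\alpha_0$ when $k_{\alpha_0}=0$, or other walls when some $k_\alpha=0$), so the parahoric obtained is in general not ``standard'' and one must land on $\mathfrak{P}_{_{\eta_\lambda}} = \mathfrak{P}_{_{\theta_\lambda}}$ rather than on $\mathfrak{P}^{^{st}}_{_{\theta_\lambda}}$. This is harmless for the computation, since \eqref{floorcondition} depends only on $\theta_{_\lambda}$ and $\mathbf{a}_0$ and not on strict interiority.
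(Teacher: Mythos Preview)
Your proposal is correct and follows exactly the route the paper takes: the paper's own proof consists of the single sentence ``We may prove the following theorem exactly like \eqref{gpschLiestab}'' together with the remark that, unlike in \eqref{gpschLiestab}, the resulting parahorics need not be standard (standardness survives only for strata meeting the divisor $H_{_{\alpha_0}}$). You have faithfully unpacked this sketch --- replacing $(T_{_{\text{ad}}},S,\mathbb A^{\ell})$ by $(T^{\ltimes}_{_{\text{ad}}},\mathbb S,\mathbb A^{\ell+1})$, rerunning the lattice/roots-map/$j$-intersection construction, and noting that one lands on $\mathfrak P_{_{\eta_\lambda}}=\mathfrak P_{_{\theta_\lambda}}$ rather than $\mathfrak P^{^{st}}_{_{\theta_\lambda}}$ --- which is precisely what the paper intends.
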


\begin{Cor}\label{canparstr} The Lie algebra bundle $J_{_{{\bf Y}^{^{\text{aff}}}}}$ gets canonical parabolic structures (\S\ref{parabstuff}) at the generic points $\xi_{_\alpha}$ of the $W^{^{\text{aff}}}$-translates of the divisors $H_{_\alpha} \subset {\bf Y}_{_0}^{^{\text{aff}}} $, $\alpha \in \mathbb S$. 
\end{Cor}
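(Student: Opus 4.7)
The approach mirrors the derivation of the canonical parabolic structure on $\mathcal R$ at the generic points of the boundary divisors $D_{_\alpha}$ on $\bf X$ in Section \ref{btgpX}. First I would observe that, by the $W^{^{\text{aff}}}$-equivariant nature of the construction of $J$ on ${\bf Y}^{^{\text{aff}}}$ (obtained by gluing $w$-translates of the canonical bundle on ${\bf Y}_{_0}^{^{\text{aff}}}$, exactly as in Corollary \ref{gpschLiestab1}), it suffices to produce the canonical parabolic structure at each generic point $\zeta_{_\alpha}$ of the coordinate hyperplane $H_{_\alpha} \subset {\bf Y}_{_0}^{^{\text{aff}}}$ for $\alpha \in \mathbb S$. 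The parabolic structure at a generic point of $w \cdot H_{_\alpha}$ is then defined as the $w$-translate, and this datum is unambiguous because the weight information attached to the vertex $\theta_{_\alpha}$ of the alcove is an intrinsic combinatorial invariant of the facet.

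At each $\zeta_{_\alpha}$, Theorem \ref{liealgbunonyaff} applied to the standard $1$-PS $\lambda = \omega_{_\alpha}^{\vee}$ gives an identification
\begin{equation}
L^+(J|_{_{Y_{_\alpha}}}) \simeq L^+(\mathfrak P_{_{\eta_{_\alpha}}}),
\end{equation}
which by the explicit weight-space description \eqref{paracartan} equals the parahoric Lie algebra $\mathfrak t(A_{_\alpha}) \oplus \bigoplus_{r \in \Phi} z^{m_{_r}(\theta_{_\alpha})}\mathfrak g_{_r}(A_{_\alpha})$ inside $\mathfrak g(K_{_\alpha})$. Exactly as in the derivation of \eqref{Liestrendow}, pulling back along the cyclic Kawamata cover $z \mapsto z^{d_{_\alpha}}$ (with Galois group $\mu_{d_{_\alpha}}$) realizes this Lie algebra as the $\mu_{d_{_\alpha}}$-invariant direct image of an equivariant Lie algebra bundle of constant fibre type $\mathfrak g$. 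By the standard dictionary recalled in \S\ref{parabstuff} (see also \cite{biswas}), this is precisely the datum of a canonical parabolic structure on $J|_{_{Y_{_\alpha}}}$, with rational weights prescribed by the numbers $m_{_r}(\theta_{_\alpha})/d_{_\alpha}$ on the root lines $\mathfrak g_{_r}$ and trivial weight on $\mathfrak t$.

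The step that I expect to require the most care is the compatibility on overlaps of two Weyl-translated charts $w_{_1} \cdot {\bf Y}_{_0}^{^{\text{aff}}}$ and $w_{_2} \cdot {\bf Y}_{_0}^{^{\text{aff}}}$ sharing a common divisor component. On such an overlap the transition isomorphism of $J$ takes values in $\text{Aut}(\mathfrak g)$, and since the shared divisor meets the overlap transversally along its generic point, the parabolic filtration at that generic point is determined uniquely by the ambient locally free sheaf and its restriction to the formal neighbourhood of the generic point (uniqueness of parabolic extensions across a smooth divisor). Hence the two candidate parabolic structures read off from the two charts agree, yielding a well-defined canonical parabolic structure on $J$ across all $W^{^{\text{aff}}}$-translates, which is the assertion of the corollary.
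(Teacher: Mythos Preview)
Your proposal is correct and follows essentially the same approach as the paper: prescribe the ramification indices $d_{_\alpha}$ at the divisors $H_{_\alpha}$, invoke the parahoric identification of $J$ at the generic points (the analogue of \eqref{Liestrendow}) to realize the local restriction as an invariant direct image from a cyclic cover, and then appeal to the equivariant/parabolic dictionary of \S\ref{parabstuff}. The paper's own proof is terser, simply pointing to the argument of Corollary~\ref{gpschLiestab1} together with \eqref{Liestrendow}; your additional paragraph on overlap compatibility is a reasonable elaboration but not a departure in strategy.
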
 
\begin{proof} We prescribe ramification indices $d_\alpha$ \eqref{dalpha} on the divisor $H_{_\alpha}$. Then as in the proof of \eqref{gpschLiestab1}, the identification \eqref{Liestrendow} of the Lie algebra structures of $J_{_{{\bf Y}^{^{\text{aff}}}}}$ and the parahoric Lie algebra structures on the localizations of the generic points of  $H_{_\alpha}$ allows us to endow parabolic structures at the generic points of the divisors.\end{proof} 

\subsubsection{The parahoric group scheme on the torus embedding ${{\bf Y}^{^{\text{aff}}}}$} 
\begin{thm} \label{mtY}  There exists an affine ``wonderful" Bruhat-Tits group scheme ${\mathcal G}_{_{{\bf Y}^{^{\text{aff}}}}}^{^{\varpi}}$ on $\bf Y$ together with a canonical isomorphism $\text{Lie}({\mathcal G}_{_{{\bf Y}^{^{\text{aff}}}}}^{^{\varpi}}) \simeq J$. It further satisfies the following classifying property:

For any point $h \in {{\bf Y}^{^{\text{aff}}}} \setminus T^{\ltimes}_{ad}$, let  $ \mathbb I \subset \mathbb S$ be a subset such that $h \in \cap_{\alpha \in \mathbb I} H_{_\alpha}$. Let $C_{_{\mathbb I}} \subset {{\bf Y}^{^{\text{aff}}}}$ be a smooth curve with generic point in $T^{\ltimes}_{_{\text{ad}}}$ and closed point $h$. Let $U_{_{h}} \subset C_{_{\mathbb I}}$ be a formal neighbourhood of the closed point $h$. Then, the restriction ${\mathcal G}_{_{{\bf Y}^{^{\text{aff}}}}}^{^{\varpi}}|_{_{U_{_{h}}}}$ is isomorphic to the Bruhat-Tits group scheme $\cG_{_{\mathbb I}}$ on $U_{_{h}}$.

\end{thm}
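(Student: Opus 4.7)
The plan is to mirror the construction of Theorem \ref{btoverx} in the torus-embedding ${\bf Y}^{^{\text{aff}}}$, exploiting the finite-dimensional affine chart ${\bf Y}_{_0}^{^{\text{aff}}} \simeq \mathbb{A}^{\ell+1}$ and the $W^{^{\text{aff}}}$-action to go from the affine chart to the whole ind-scheme. The key inputs are the Lie algebra bundle $J$ with its canonical parabolic structures provided by Theorem \ref{liealgbunonyaff} and Corollary \ref{canparstr}.

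First I would work on ${\bf Y}_{_0}^{^{\text{aff}}}$ and construct an affine group scheme $\cG^{\circ}$ on the open subscheme $Y'$ of \eqref{yprime} (whose complement has codimension $\geq 2$) by gluing the trivial group scheme $G \times T^{\ltimes}_{_{\text{ad}}}$ with the Bruhat-Tits group schemes $\cG_{_\alpha}$ on the $Y_{_\alpha} = \spec(A_{_\alpha})$ along their intersections $\spec(K_{_\alpha})$ using the transition data of $J$, which takes values in $\text{Aut}(\gfr) = \text{Aut}(G)$ (the latter using simple connectedness of $G$). Applying \cite{biswas} to the parabolic Lie-algebra bundle $J|_{{\bf Y}_{_0}^{^{\text{aff}}}}$ next yields a Kawamata cover $p: Z_{_0} \to {\bf Y}_{_0}^{^{\text{aff}}}$ with Galois group $\Gamma$, ramification prescribed by the weights $d_{_\alpha}$, and an equivariant locally free sheaf $V_{_0}$ on $Z_{_0}$ satisfying $p_*^{\Gamma}(V_{_0}) \simeq J|_{{\bf Y}_{_0}^{^{\text{aff}}}}$. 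Following the argument in Theorem \ref{btoverx} verbatim, the transition data of $V_{_0}|_{Z'_{_0}}$ (where $Z'_{_0} := p^{-1}(Y')$) assembles an equivariant $\text{Aut}(G)$-torsor $\cE^{\circ}$ on $Z'_{_0}$. Since $\text{codim}(Z_{_0} \setminus Z'_{_0}) \geq 2$ and $V_{_0}$ carries a non-degenerate Lie bracket with fibre type $\gfr$ extending across codimension-two loci by rigidity of semisimple Lie algebras, $\cE^{\circ}$ extends to an equivariant $\text{Aut}(G)$-torsor $\cE$ on all of $Z_{_0}$. Setting $\cH := \cE \times^{\text{Aut}(G)} G$, define
\begin{equation}
{\mathcal G}_{_{{\bf Y}_{_0}^{^{\text{aff}}}}}^{^{\varpi}} := \big(\text{Res}_{Z_{_0}/{\bf Y}_{_0}^{^{\text{aff}}}}(\cH)\big)^{^{\Gamma}},
\end{equation}
and apply Lemma \ref{Weilrestriction} to obtain $\text{Lie}({\mathcal G}_{_{{\bf Y}_{_0}^{^{\text{aff}}}}}^{^{\varpi}}) \simeq p_*^{\Gamma}(\cE(\gfr)) \simeq J|_{{\bf Y}_{_0}^{^{\text{aff}}}}$.

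To pass to ${\bf Y}^{^{\text{aff}}}$, I would repeat the construction on each $W^{^{\text{aff}}}$-translate $w \cdot {\bf Y}_{_0}^{^{\text{aff}}}$ by transport of structure. On overlaps, both group schemes agree on the dense open $T^{\ltimes}_{_{\text{ad}}}$ and along any common codimension-$1$ boundary stratum, since the Bruhat-Tits parahoric attached at a generic point depends only on the facet of the Bruhat-Tits building it defines and not on the chosen alcove representative. This allows the pieces to glue to an affine group scheme ${\mathcal G}_{_{{\bf Y}^{^{\text{aff}}}}}^{^{\varpi}}$ on the whole ind-scheme ${\bf Y}^{^{\text{aff}}}$ with $\text{Lie}(\cdot) \simeq J$. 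For the classifying property, fix $h \in \cap_{\alpha \in \mathbb I} H_{_\alpha}$; after $W^{^{\text{aff}}}$-translation place $h$ in the closure of the fundamental alcove with associated barycenter $\theta_{\mathbb I}$ from \eqref{Ibarycenter}. Base-changing the Kawamata cover to $U_h$ produces a Galois cover $p_h: W_h \to U_h$ with cyclic Galois group $\mu_{_d} \subset \Gamma$. By \cite[Proposition 5.1.2]{base} applied to $\theta_{\mathbb I}$, there is another ramified cover $q_{\mathbb I}: U'_h \to U_h$ and an equivariant $(\Gamma_{\mathbb I}, G)$-torsor $E_{\mathbb I}$ with $q_{*}^{\Gamma_{\mathbb I}}(E_{\mathbb I}(G)) \simeq \cG_{\mathbb I}$ and $q_{*}^{\Gamma_{\mathbb I}}(E_{\mathbb I}(\gfr)) \simeq J|_{U_h}$ (using Theorem \ref{liealgbunonyaff} together with Remark \ref{gendomlambda} to drop standardness). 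On a common Kawamata refinement of $W_h$ and $U'_h$, the pullbacks of $\cE(\gfr)$ and $E_{\mathbb I}(\gfr)$ agree as equivariant Lie-algebra bundles since both invariant-direct-image to $J|_{U_h}$; by rigidity of $\gfr$ and the identification $\text{Aut}(G) = \text{Aut}(\gfr)$, the associated simply-connected $G$-torsors agree, and invariant direct image on each side yields ${\mathcal G}_{_{{\bf Y}^{^{\text{aff}}}}}^{^{\varpi}}|_{U_h} \simeq \cG_{\mathbb I}$.

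I expect the hardest part to be the $W^{^{\text{aff}}}$-gluing step that globalizes the construction from ${\bf Y}_{_0}^{^{\text{aff}}}$ to the ind-scheme ${\bf Y}^{^{\text{aff}}}$: one must ensure the Kawamata covers on different translates are compatible and that the parahoric data assigned at generic points of boundary divisors is intrinsic to the facet. A second subtlety, absent from Theorem \ref{btoverx}, is that the classifying property here asserts the appearance of \emph{all} Bruhat-Tits group schemes $\cG_{\mathbb I}$ (not merely the standard ones with $\alpha_{_0} \in \mathbb I$); this requires the non-standard version of Theorem \ref{liealgbunonyaff} afforded by Remark \ref{gendomlambda} to identify $J|_{U_h}$ with $\mathfrak P_{_{\theta_{\mathbb I}}}$ for an arbitrary facet of the affine apartment.
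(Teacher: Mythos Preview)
Your overall strategy matches the paper's: build the group scheme on each affine chart $Y_{_w}\simeq\mathbb{A}^{\ell+1}$ via a Kawamata cover exactly as in Theorem~\ref{btoverx}, then glue over $W^{^{\text{aff}}}$-translates. The classifying-property argument is also the same as the paper's.

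There is, however, a genuine gap in your gluing step. You argue that on an overlap $Y_{_{uv}}:=Y_{_u}\cap Y_{_v}$ the two locally-constructed group schemes $\cG_{_u}$ and $\cG_{_v}$ agree on the dense torus $T^{\ltimes}_{_{\text{ad}}}$ and at the height-$1$ primes of the common boundary divisors. That gives an isomorphism only on an open subset of $Y_{_{uv}}$ whose complement has codimension $\ge 2$; for affine group schemes which are \emph{not} reductive over the boundary (they are parahoric there), this does not automatically extend to an isomorphism on all of $Y_{_{uv}}$. You correctly flag ``Kawamata cover compatibility'' as the hard point, but you do not actually use it in your argument.

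The paper resolves this precisely. It first notes explicitly that ${\bf Y}^{^{\text{aff}}}$ is not quasi-projective, so no single global Kawamata cover is available; one is forced to work chart-by-chart with independent covers $p_{_w}:Z_{_w}\to Y_{_w}$. On an overlap $Y_{_{uv}}$ the paper passes to the normalization $\tilde Z_{_{uv}}$ of a component of $Z_{_{uv}}\times_{_{Y_{_{uv}}}}Z_{_{vu}}$, which is itself a Kawamata cover of $Y_{_{uv}}$ (\cite[Cor.~2.6]{vieweg}). Pulling back the two equivariant group schemes $\cH_{_u},\cH_{_v}$ to $\tilde Z_{_{uv}}$, their equivariant Lie-algebra bundles both invariant-direct-image to $J|_{_{Y_{_{uv}}}}$ with the \emph{same} parabolic structure (Corollary~\ref{canparstr}); hence by $\mathrm{Aut}(\gfr)=\mathrm{Aut}(G)$ the equivariant group schemes are canonically identified on all of $\tilde Z_{_{uv}}$, and taking invariants gives $\cG_{_u}|_{_{Y_{_{uv}}}}\simeq\cG_{_v}|_{_{Y_{_{uv}}}}$ on the full overlap. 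The cocycle condition is inherited from the gluing data of $J$. This common-cover comparison is exactly the missing ingredient in your sketch; once you insert it, your proof and the paper's coincide.
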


\begin{proof}
Recall that ${{\bf Y}^{^{\text{aff}}}}$ is covered by affine spaces $Y_{_w} \simeq \mathbb{A}^{\ell+1}$ parametrized by the affine Weyl group $W^{^{\text{aff}}}$. Each $Y_{_w}$ is the translate of ${{\bf Y}_{_0}^{^{\text{aff}}}}$.  The translates of the divisors $H_{_\alpha}$ meet each $Y_{_w}$ in the standard hyperplanes on $\mathbb{A}^{\ell+1}$ and thus we can prescribe the same ramification data at the hyperplanes on each of the $Y_{_w}$'s. On the other hand, although we have simple normal crossing singularities, we do not have an analogue of the Kawamata covering lemma for schemes such as ${{\bf Y}^{^{\text{aff}}}}$. The lemma is known only in the setting of quasi-projective schemes. So to construct the group scheme, we employ a different approach.

We observe firstly that, the formalism of Kawamata coverings applies in the setting of the affine spaces $Y_{_w} \subset {{\bf Y}^{^{\text{aff}}}}$. Let $p_w: Z_{_w} \ra Y_{_w}$ be the associated Kawamata cover (\ref{kawa}) with Galois group $\Gamma_w$. In Corollary \ref{canparstr} we observed that the Lie-algebra bundle $J$ has a canonical parabolic structure like $\cR$ in the proof  of Theorem \ref{btoverx}. Letting $Y_{_w}$ play the role of ${\bf Y_{_0}}$ and  using all arguments in the proof of  Theorem \ref{btoverx} we obtain $\cH_{_w} \ra Z_{_w}$ which is $\Gamma_w$-group scheme with  fibers isomorphic to   $G$ whose invariant direct image is a group scheme $\cG_w$ such that $\text{Lie}(\cG_w) =J|_{Y_{_w}}$. The induced parabolic structure on $J|_{Y_{_w}}$ is the restriction of the one on $J$.  Indeed, by \eqref{canparstr} these parabolic structures are essentially given at the local rings at the generic points $\xi_{\alpha}$ of the divisors $H_{_\alpha}$ and hence 
these parabolic structures on $J$ agree on the intersections $Y_{{_{uv}}}:=Y_{_u} \cap  Y_{_v}$. 

Let $Z_{_{uv}}:= p_u^{-1}(Y_{_{uv}})$. Let $\tilde{Z}_{_{uv}}$ be the normalization  of a component of  $Z_{_{uv}} \times_{Y_{_{uv}}} Z_{_{vu}}$. Then $\tilde{Z}_{_{uv}}$   serves as Kawamata cover (\ref{kawa}) of $Y_{_{uv}}$ (see \cite[Corollary 2.6, page 56]{vieweg}).  We consider the morphisms $\tilde{Z}_{_{uv}} \to Z_{_{u}}$ (resp.$\tilde{Z}_{_{uv}} \to Z_{_{v}}$) and let $\cH_{_{u,\tilde{Z}}}$ (resp. $\cH_{_{v,\tilde{Z}}}$) denote the pull-backs of $\cH_{_u}$ (resp. $\cH_{_v}$)  to $\tilde{Z}_{_{uv}}$. 

Let $\Gamma$ denote the Galois group for $\tilde{Z}_{_{uv}} \ra Y_{_{uv}}$. Then by Lemma \ref{Weilrestriction}, the invariant direct images of the equivariant Lie algebra bundles $\text{Lie}(\cH_{_{u,\tilde{Z}}})$ and $\text{Lie}(\cH_{_{v,\tilde{Z}}})$ coincide with the Lie algebra structure on   $J$ restricted to the ${Y_{_{uv}}}$ and also as isomorphic parabolic bundles. Therefore we have a natural isomorphism of equivariant Lie-algebra  bundles
\begin{equation}
\text{Lie}(\cH_{u,\tilde{Z}}) \simeq \text{Lie}(\cH_{v,\tilde{Z}}).
\end{equation}
As in the proof of Theorem \ref{btoverx}, this gives a canonical identification of the equivariant group schemes $\cH_{_{u,\tilde{Z}}}$ and $\cH_{_{v,\tilde{Z}}}$ on $\tilde{Z}_{_{uv}}$. Since the invariant direct image of both the group schemes  $\cH_{_{u,\tilde{Z}}}$ and $\cH_{_{v,\tilde{Z}}}$ are the restrictions 
$\cG_{_{u,{Y_{_{uv}}}}}$ and $\cG_{_{v,{Y_{_{uv}}}}}$, it follows that on $Y_{_{uv}}=Y_{_u} \cap Y_{_v}$  we get a canonical identification of group schemes:
\beqa\label{inducedfromlie}
\cG_{_{u,{Y_{_{uv}}}}} \simeq \cG_{_{v,{Y_{_{uv}}}}}.
\eeqa 
These identifications are canonically induced from the gluing data of the Lie algebra bundle $J$ for the cover $Y_{_w}$'s. Therefore the cocycle conditions are clearly satisfied and the identifications \eqref{inducedfromlie} glue to give  the group scheme ${\mathcal G}_{_{{\bf Y}^{^{\text{aff}}}}}^{^{\varpi}}$ 
 on ${\bf Y}^{^{\text{aff}}}$. The verification of the classifying property follows exactly as in the proof of Theorem \ref{btoverx}.

 \end{proof}

\section{The Bruhat-Tits group scheme on ${\bf X}^{^{{aff}}}$}
Let  ${\bf X}^{^{{aff}}}$ as in \S\ref{loopcase}. We begin with a generality. Let $\mathbb{X}$ be an ind-scheme. By an open-subscheme $i: \mathbb{U} \hookrightarrow \mathbb{X}$ we mean an ind-scheme such that for any $f: \spec(A) \rightarrow \mathbb{X}$, the natural morphism $\mathbb{U} \times_{\mathbb{X}} \spec(A) \rightarrow \spec(A)$ is an open immersion. For a sheaf $\mathbb{F}$ on $\mathbb{U}$ by $i_{_*}(\mathbb{F})$ we mean the sheaf associated to the pre-sheaf on the ``big site" of $\mathbb{X}$, whose sections on $f: \spec(A) \rightarrow \mathbb{X}$ are given by $\mathbb{F}(\mathbb{U} \times_{\mathbb{X}} \spec(A))$.

 The ind-scheme ${\bf X}^{^{\text{aff}}}$ has a certain open subset ${\bf X}_{_0}$ whose precise definition is somewhat technical \cite[Page 705]{solis}. Let us mention the properties relevant for us.

Recall that ${\bf X}^{^{\text{aff}}} = (G_{_{\text{ad}}}^{^{\text{aff}}} \times G_{_{\text{ad}}}^{^{\text{aff}}})~{\bf X}_{_0}$ and in fact ${\bf X}^{^{\text{aff}}} = (G_{_{\text{ad}}}^{^{\text{aff}}} \times G_{_{\text{ad}}}^{^{\text{aff}}}) ~{{\bf Y}^{^{\text{aff}}}}$. Further, the torus-embedding ${\bf Y}^{^{\text{aff}}}$ is covered by ${\bf Y}^{^{\text{aff}}}_{_w} \simeq \mathbb A^{^{\ell +1}}$ which are $W^{^{\text{aff}}}$-translates of ${\bf Y}^{^{\text{aff}}}_{_0}$, where ${\bf Y}^{^{\text{aff}}}_{_0} = {{\bf Y}^{^{\text{aff}}}} \cap {\bf X}_{_0}$. We remark that, analogous to the case of ${\bf Y}_{_0} \subset {\bf Y} \subset {\bf X}$, just as the toric variety ${\bf Y}_{_0} $ was associated to the negative Weyl chamber, the toric variety  ${\bf Y}^{^{\text{aff}}}_{_0}$ is associated to the negative Weyl alcove.

So let us denote by  $0$  the neutral element of $W^{^{\text{aff}}}$ also.  Let $U^{\pm} \subset B^{\pm}$ be the unipotent subgroups. Let $\cU^{\pm}:= ev^{-1}(U^{\pm})$ where $ev: G(A) \ra G(k)$ is the evalution map. Further by \cite[Prop 5.3]{solis} 
\begin{equation} {\bf X}_{_0}= \cU \times {\bf Y}^{^{\text{aff}}}_{_0} \times \cU^-.
\end{equation}
Let ${\bf X}_{_w} := \cU \times {\bf Y}^{^{\text{aff}}}_{_w} \times \cU^-$. These cover $\cU \times {{\bf Y}^{^{\text{aff}}}} \times \cU^-$. For  $g \in G_{_{\text{ad}}}^{^{\text{aff}}} \times G_{_{\text{ad}}}^{^{\text{aff}}}$, let 
\begin{equation} \label{compatibilities} {\bf X}_g:=g {\bf X}_0 \quad {\bf X}_{_{g,w}}:=g {\bf X}_{_w} \quad {\bf Y}^{^{\text{aff}}}_{_{g,w}}:=g {\bf Y}^{^{\text{aff}}}_{_w}.
\end{equation}
Note that we have the projection ${\bf X}_{_{g,w}} \to {\bf Y}^{^{\text{aff}}}_{_{g,w}}$ which is a $\cU \times \cU^-$-bundle.

 By \cite[Theorem 5.1]{solis} the ind-scheme ${\bf X}^{^{\text{aff}}}$ has divisors $D_\alpha$ for $\alpha \in \mathbb S$ such that the complement of their union is ${\bf X}^{^{\text{aff}}} \setminus G_{_{\text{ad}}}^{^{\text{aff}}}$. The next proposition shows the existence of a finite dimensional Lie algebra bundle on ${\bf X}^{^{\text{aff}}}$ which is analogous to the bundle $\cR$ over ${\bf X}$. 

\begin{prop}\label{isotliealginfty1} There is a finite dimensional Lie algebra bundle $\bf R$ on ${\bf X}^{^{\text{aff}}}$ which extends the trivial Lie algebra bundle $G_{_{\text{ad}}}^{^{\text{aff}}} \times \mathfrak g$ on the open dense subset $G_{_{\text{ad}}}^{^{\text{aff}}} \subset {\bf X}^{^{\text{aff}}}$ and whose restriction to ${\bf Y}^{^{\text{aff}}}$ is $J$. \end{prop}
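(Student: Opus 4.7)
The strategy will mirror the construction of $\mathcal{R}_{_{\bf X}}$ on the wonderful compactification $\bf X$ (see Corollary \ref{gpschLiestab2}), now adapted to the ind-scheme ${\bf X}^{^{\text{aff}}}$ via its open cover by the $\{{\bf X}_{g,w}\}$ described in \eqref{compatibilities}. The key tool is the family of projections $\pi_{g,w} : {\bf X}_{g,w} \to {\bf Y}^{^{\text{aff}}}_{g,w}$, which by the product decomposition is a $\cU \times \cU^{-}$-bundle, and which allows one to pull back (translates of) the Lie-algebra bundle $J$.

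First, I would build $\bf R$ on the intermediate subset $\cU \times {\bf Y}^{^{\text{aff}}} \times \cU^{-}$. On each ``big cell'' ${\bf X}_w = \cU \times {\bf Y}^{^{\text{aff}}}_w \times \cU^{-}$, set ${\bf R}|_{{\bf X}_w} := \pi_w^{\ast}(J|_{{\bf Y}^{^{\text{aff}}}_w})$, with Lie bracket inherited from $J$ (Theorem \ref{liealgbunonyaff}). Since $J$ is already a globally defined Lie-algebra bundle on ${\bf Y}^{^{\text{aff}}}$ (Theorem \ref{mtY}), the overlaps ${\bf X}_w \cap {\bf X}_{w'}$ project to ${\bf Y}^{^{\text{aff}}}_w \cap {\bf Y}^{^{\text{aff}}}_{w'}$ and the two candidate pullbacks agree on the nose. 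This yields $\bf R$ on $\cU \times {\bf Y}^{^{\text{aff}}} \times \cU^{-}$, and by construction its restriction to ${\bf Y}^{^{\text{aff}}} \hookrightarrow \cU \times {\bf Y}^{^{\text{aff}}} \times \cU^{-}$ (via the unit sections) recovers $J$.

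Second, I would extend to all of ${\bf X}^{^{\text{aff}}}$ by $(G_{\text{ad}}^{^{\text{aff}}} \times G_{\text{ad}}^{^{\text{aff}}})$-translation: on ${\bf X}_{g,w} = g\cdot {\bf X}_w$ take the transported bundle. The crux is gluing on intersections ${\bf X}_{g,w} \cap {\bf X}_{g',w'}$. These always meet the dense open $G_{\text{ad}}^{^{\text{aff}}} \subset {\bf X}^{^{\text{aff}}}$ densely, and on that locus both candidates become the trivial bundle with fiber $\mathfrak g$; the two trivializations agree canonically since both arise from the fibrewise identification $J|_{T_{\text{ad}}^{\ltimes}} = T_{\text{ad}}^{\ltimes} \times \mathfrak g$. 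The extension across the codimension-one strata $D_\alpha$ is rigidly determined by the parahoric data at the generic points $\xi_\alpha$, which by Corollary \ref{canparstr} is uniquely encoded in $J$ (through the parabolic structure with weights $d_{_\alpha}$), so the two extensions necessarily coincide.

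The main obstacle I anticipate is making this gluing argument fully rigorous, since ${\bf X}^{^{\text{aff}}}$ is an ind-scheme and $\cU, \cU^{-}$ are pro-unipotent; the Kawamata-cover technique used for $\bf X$ in Theorem \ref{btoverx} is unavailable here, which is precisely the reason Theorem \ref{mtY} was forced to argue via Lie-algebra gluing in the first place. The substitute is to observe that $\bf R$ is a rank $\dim \mathfrak g$ locally free sheaf whose restriction to the dense open $G_{\text{ad}}^{^{\text{aff}}}$ is canonically trivialized, and whose extension across the codimension-one boundary is pinned down by the Lie-algebra data at the $\xi_\alpha$. A reflexive-hull argument in the spirit of Lemma \ref{langton}--Lemma \ref{langton1}, applied along each finite-dimensional approximation of ${\bf X}^{^{\text{aff}}}$, should then force the locally defined ${\bf R}|_{{\bf X}_{g,w}}$ to glue into a globally well-defined $\bf R$ satisfying the two asserted restriction properties.
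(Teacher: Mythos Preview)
Your approach and the paper's share the same two key ideas --- that on any chart ${\bf X}_{g,w}$ the bundle should be the pull-back of $J$ along the projection to ${\bf Y}^{^{\text{aff}}}_{g,w}$, and that local freeness/gluing is controlled by what happens on the dense open and at the height-$1$ primes --- but the organization differs. The paper does not build ${\bf R}$ chart by chart and then glue; instead, following exactly the template of Corollaries~\ref{gpschLiestab1} and~\ref{gpschLiestab2}, it first constructs a sheaf $J'$ on the open subset ${\bf X}' = G^{^{\text{aff}}}_{_{\text{ad}}} \cup \{\text{height-}1\text{ primes}\}$ by transporting the gluing data of $J$ at the $\zeta_{_\alpha}$ to the $\xi_{_\alpha}$, then takes ${\bf R} := i_{_*}(J')$ as the push-forward to ${\bf X}^{^{\text{aff}}}$, and finally checks local freeness by restricting to ${\bf X}_{_0}$, where the push-forward is visibly the pull-back of $J$ from ${\bf Y}^{^{\text{aff}}}_{_0}$. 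This completely sidesteps the gluing concern you flagged: there is a single global sheaf from the outset (the push-forward), and the only thing to verify is a local property, which by $G^{^{\text{aff}}}_{_{\text{ad}}} \times G^{^{\text{aff}}}_{_{\text{ad}}}$-equivariance reduces to the single chart ${\bf X}_{_0}$. Your route is correct in substance, but the paper's is shorter precisely because the reflexive-hull/push-forward machinery absorbs the gluing into one step rather than leaving it as a cocycle check over the full $(g,w)$-indexed cover.
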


\begin{proof} 
Following the arguments for  \eqref{gpschLiestab1} and \eqref{gpschLiestab2}, let $J'$ be the locally free sheaf obtained on the open subset ${\bf X'}$ obtained by the union of $G^{^{\text{aff}}}$ and the height one prime ideals of ${\bf X}^{^{\text{aff}}}$.  Let ${\bf R}$ denote its push-forward. To check that the push-forward is locally free, without loss of generality we may consider its restriction to ${\bf X}_0$. But on ${\bf X}_0$, the pushforward of $J'$ restricts to the pull-back of a Lie-algebra bundle $J$ on ${\bf Y}^{^{\text{aff}}}$ constructed in \eqref{liealgbunonyaff}, which completes the argument. The rest of the properties follows immediately.
\end{proof}

\begin{thm}  \label{gpshsolis} There exists an affine ``wonderful" Bruhat-Tits group scheme ${\mathcal G}_{_{{\bf X}^{^{\text{aff}}}}}^{^{\varpi}}$ on ${\bf X}^{^{\text{aff}}}$ together with a canonical isomorphism $\text{Lie}({\mathcal G}_{_{\bf X^{aff}}}^{^{\varpi}}) \simeq {\bf R}$. It further satisfies the following classifying property: 

For any point $h \in {\bf X}^{^{\text{aff}}} \setminus G_{_{\text{ad}}}^{^{\text{aff}}}$, let  $ \mathbb I \subset \mathbb S$ be defined by the condition $h \in \cap_{\alpha \in \mathbb{I}} D_\alpha$. Let $C_{_{\mathbb I}} \subset {\bf X}$ be a smooth curve with generic point in $G_{_{\text{ad}}}^{^{\text{aff}}}$ with closed point $h$. Let $U_{_{h}} \subset C_{_{\mathbb I}}$ be a formal  neighbourhood of $h$. Then, the restriction ${\mathcal G}_{_{{\bf X}^{^{\text{aff}}}}}^{^{\varpi}}|_{_{U_{_{h}}}}$ is isomorphic to the Bruhat-Tits group scheme $\cG_{_{\mathbb I}}$  on $\spec(A)$. 
\end{thm}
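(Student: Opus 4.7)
The strategy parallels the arguments for Theorems \ref{btoverx} and \ref{mtY}, now lifted to the ind-scheme ${\bf X}^{^{\text{aff}}}$ via the open cover $\{{\bf X}_{_{g,w}}\}$ discussed in \eqref{compatibilities}. Recall that each ${\bf X}_{_{g,w}} = g(\cU \times {\bf Y}^{^{\text{aff}}}_{_w} \times \cU^{-})$ admits a canonical projection $\pi_{_{g,w}} \colon {\bf X}_{_{g,w}} \to {\bf Y}^{^{\text{aff}}}_{_{g,w}}$ realising it as a $\cU \times \cU^{-}$-bundle, and by Proposition \ref{isotliealginfty1} the restriction of ${\bf R}$ to ${\bf X}_{_{g,w}}$ is canonically the pull-back $\pi_{_{g,w}}^{*}(J|_{{\bf Y}^{^{\text{aff}}}_{_{g,w}}})$. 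The plan is to construct the group scheme chart-by-chart from this pull-back description and then to glue.

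On each chart I would define ${\mathcal G}_{_{{\bf X}^{^{\text{aff}}}}}^{^{\varpi}}|_{{\bf X}_{_{g,w}}} := \pi_{_{g,w}}^{*} \big( {\mathcal G}_{_{{\bf Y}^{^{\text{aff}}}}}^{^{\varpi}}|_{{\bf Y}^{^{\text{aff}}}_{_{g,w}}} \big)$, where the right hand side comes from Theorem \ref{mtY} applied to the $W^{^{\text{aff}}}$-translate ${\bf Y}^{^{\text{aff}}}_{_{g,w}}$. Since $\text{Lie}$ commutes with pull-back of smooth affine group schemes, we immediately obtain the Lie-algebra identification $\text{Lie}({\mathcal G}_{_{{\bf X}^{^{\text{aff}}}}}^{^{\varpi}}|_{{\bf X}_{_{g,w}}}) \simeq {\bf R}|_{{\bf X}_{_{g,w}}}$ on each chart. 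To glue across an overlap ${\bf X}_{_{g,w}} \cap {\bf X}_{_{g',w'}}$, I would argue as in the final part of the proof of Theorem \ref{mtY}: both local pull-backs have canonically isomorphic Lie algebra bundles (namely ${\bf R}$ restricted to the overlap), and because $G$ is simply-connected so that $\text{Aut}(\mathfrak g) = \text{Aut}(G)$, the identification of these locally-free sheaves of semisimple Lie algebras uniquely determines an identification of the group schemes with semisimple fibres. Triple-overlap cocycle conditions then reduce to those for ${\bf R}$, which were already verified in Proposition \ref{isotliealginfty1}. This yields the global affine group scheme ${\mathcal G}_{_{{\bf X}^{^{\text{aff}}}}}^{^{\varpi}}$ on the ind-scheme ${\bf X}^{^{\text{aff}}}$ with the asserted Lie algebra identification.

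For the classifying property, given $h \in \cap_{\alpha \in \mathbb I} D_{\alpha}$, I would first use the $L^{\ltimes}G \times L^{\ltimes}G$-equivariance of the setup to move $h$ into a chart ${\bf X}_{_{g,w}}$ and then use a further unipotent translation to put $h$ on the torus embedding ${\bf Y}^{^{\text{aff}}}_{_{g,w}}$. A transversal curve $C_{_{\mathbb I}}$ can then be chosen inside ${\bf Y}^{^{\text{aff}}}_{_{g,w}}$, analogously to the construction in \S\ref{earlyloghomo} using the fundamental coweights indexed by $\mathbb I$. The classifying property on ${\bf Y}^{^{\text{aff}}}$ established in Theorem \ref{mtY} identifies the restriction of ${\mathcal G}_{_{{\bf Y}^{^{\text{aff}}}}}^{^{\varpi}}$ to $U_{_h}$ with $\cG_{_{\mathbb I}}$, and this identification lifts to ${\bf X}^{^{\text{aff}}}$ because $\pi_{_{g,w}}$ induces an isomorphism on the formal neighbourhood of $h$ in the chosen transversal curve. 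The independence of the isomorphism class of ${\mathcal G}_{_{{\bf X}^{^{\text{aff}}}}}^{^{\varpi}}|_{U_{_h}}$ from the choice of transversal curve follows from the fact that the restriction of a smooth affine group scheme to a formal neighbourhood of a smooth closed point on a transversal smooth curve depends only on the combinatorial datum $\mathbb I$.

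The main obstacle I anticipate is managing the ind-scheme structure of ${\bf X}^{^{\text{aff}}}$: one must verify that the chart-wise pull-back constructions genuinely glue over potentially infinitely many overlaps and remain compatible with the full $L^{\ltimes}G \times L^{\ltimes}G$-action implicit in Solis' construction. A secondary subtlety, inherited from Theorem \ref{mtY}, is that a global Kawamata cover of ${\bf X}^{^{\text{aff}}}$ is unavailable, so the step upgrading a Lie-algebra-bundle isomorphism on an overlap to a group-scheme isomorphism must be executed via rigidity of semisimple Lie algebras together with the $\text{Aut}(\mathfrak g)=\text{Aut}(G)$ identification, rather than by descent from a single global equivariant torsor.
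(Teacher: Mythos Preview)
Your proposal is correct and follows essentially the same approach as the paper. The paper's proof is a two-sentence reduction: it observes that the charts ${\bf X}_{_{g,w}}$ together with the bundle ${\bf R}$ play exactly the roles that the charts ${\bf Y}^{^{\text{aff}}}_{_w}$ and the bundle $J$ played in the proof of Theorem \ref{mtY}, so the local group schemes $\cG_{_{g,w}}$ glue and the classifying property is verified as in Theorem \ref{btoverx}. Your version makes this reduction explicit by constructing the local group scheme on each ${\bf X}_{_{g,w}}$ as the pull-back of ${\mathcal G}_{_{{\bf Y}^{^{\text{aff}}}}}^{^{\varpi}}$ along $\pi_{_{g,w}}$, which is exactly what the paper's analogy amounts to once one notes that ${\bf R}|_{{\bf X}_{_{g,w}}} = \pi_{_{g,w}}^{*}J$ and that the Kawamata cover needed on ${\bf X}_{_{g,w}}$ is simply the pull-back of the one on ${\bf Y}^{^{\text{aff}}}_{_w}$. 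One small point of phrasing: ${\bf Y}^{^{\text{aff}}}_{_{g,w}}$ is a $(G^{^{\text{aff}}}_{_{\text{ad}}}\times G^{^{\text{aff}}}_{_{\text{ad}}})$-translate of ${\bf Y}^{^{\text{aff}}}_{_w}$, not merely a $W^{^{\text{aff}}}$-translate, so Theorem \ref{mtY} is being transported along the isomorphism $g$ rather than applied literally; this is harmless but worth stating precisely.
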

\begin{proof} We begin by observing  that ${\bf X}_{_{g,w}}$ and ${\bf R} \ra {\bf X}^{^{\text{aff}}}$ play the role of ${{\bf Y}_{_{g,w}}^{^{\text{aff}}}}$ and $J \ra {{\bf Y}^{^{\text{aff}}}}$ in the proof of Theorem \ref{mtY}.  Therefore the group scheme $\cG_{_{g,w}}$ glue together and we obtain the global group scheme ${\mathcal G}_{_{{\bf X}^{^{\text{aff}}}}}^{^{\varpi}}$. The verification of the classifying property follows exactly as in the proof of Theorem \ref{btoverx}.

\end{proof}

\section{The Bruhat-Tits group scheme on ${\bf X}^{^{{poly}}}$}

In \cite{solis}, apart from the loop group case Solis also discusses the polynomial loop group situation which in fact is relatively simpler. Consequently, the analysis as well as the results in the loop group situation go through with some simplifications. Let  $L_{_{\text{poly}}}G$ be defined by 
\begin{equation} L_{_{\text{poly}}}G(R') = G(R'[z^{^{\pm}}])
\end{equation} and we define $L_{_{\text{poly}}}^{\ltimes}G$  similarly as $L_{_{\text{poly}}}^{\ltimes}G 
:= \GG_m \ltimes L_{_{\text{poly}}}G$. Solis constructs a ``wonderful" embedding of the polynomial loop group $L_{_{\text{poly}}}^{\ltimes}G/Z(G)$. We denote this space by ${\bf X}^{^{\text{poly}}}$.  It has an open subset ${\bf X}^{^{\text{poly}}}_{_0}$ whose definition is somewhat technical \cite[\S 5.3]{solis}, but, which plays the role completely analogus to ${\bf X}_{_0}$ for ${\bf X}^{^{\text{aff}}}$.  We will continue to denote by $\bf Y:= \ol{T^{\ltimes}_{_{\text{ad}}}}$ the toral embedding of $T^{\ltimes}_{_{\text{ad}}}$ obtained by taking its closure  in ${\bf X}^{^{\text{poly}}}$. As in the loop group case, in the polynomial case too $\bf Y$ is covered by infinitely  many affine spaces  $\mathbb{A}^{\ell+1}$ parametrized by the affine Weyl group $W^{^{\text{aff}}}$. In fact, if $t^{-\alpha_i}$ are the regular functions on  $T^{\ltimes}_{_{\text{ad}}}$ given by the character $-\alpha_i$, then 
$$\ol{T^{\ltimes}_{_{ad,0}}} := \ol{T^{\ltimes}_{_{\text{ad}}}} \cap {\bf X}_{_0}^{^{poly}} \simeq  \spec \CC[t^{-\alpha_0}, \ldots, t^{-\alpha_\ell}].$$

We index these affine spaces by the affine Weyl group $W^{^{\text{aff}}}$ and denote them as before by $Y_{_w}$'s, $w \in W^{^{\text{aff}}}$. As before, we may construct a finite-dimensional Lie-algebra bundle $J$ on ${{\bf Y}^{^{\text{aff}}}}$, then ${\bf R}$ on ${\bf X}^{^{\text{poly}}}$ and then construct a group scheme ${\mathcal G}_{_{{\bf X}^{^{\text{poly}}}}}^{^{\varpi}}$. Since the inductive structure of ${\bf X}^{^{\text{poly}}}$ is identical to ${\bf X}^{^{\text{aff}}}$,  the proofs are also identical. Therefore we omit them.

Now we outline a construction in the polynomial loop situation which will be generalized in the next section. For any affine space $Y_{_w}$, we have an affine embeddings
\beqa
i_w: T_{_{\text{ad},w}} \times \GG_m = T^{\ltimes}_{_{\text{ad},w}} \hra Y_{_w}
\eeqa
Furthermore, the projection $p:T_{_{\text{ad},w}} \times \GG_m \to \GG_m$ extends to a morphism $p_w:Y_{_w} \to \mathbb A^1$ such that $p^{^{-1}}(0)$ is the union of the hyperplanes. At the level of coordinates, this extension is simply the product of the coordinate functions. Further $i_w$ and $p_w$ glue to give
\beqa\label{keydiagpolyetale}
\xymatrix{
T_{_{\text{ad}}}^{\ltimes} \ar[r] \ar[d]_{p} &
 {{\bf Y}^{^{\text{aff}}}} \ar[d]^{p} \\
\mathbb{G}_m \ar[r] &  \mathbb{A}^1
}
\eeqa

\section{An analogue of a construction of Mumford} In 
\cite{kkms} towards the very end Mumford gives a beautiful construction of the geometric realization of the relative case of buildings via toroidal embeddings. He deals with the general situation of an arbitrary discrete valuation ring $R$ with algebraically closed residue field $k = R/\mathfrak m$. Our aim in this section is limited to the case when $k = \CC$, $\mathfrak{m}=(z) \subset \CC[z]$ and $R = \CC[z]_{_{\mathfrak m}}$.  Let $K = \CC(z)$ and let $S = \spec R$ and $\eta$ be the generic point and $o \in S$ the closed point. For schemes $X$ over $S$, $X_{_\eta}$ will be the fibre over $\eta$ and $X_{_o}$ the fibre over $o$.

We will briefly sketch an analogue of Mumford's construction in our setting and outline of the relationship between this construction to Solis's approach and as a consequence construct a ``wonderful" Bruhat-Tits group scheme on the toroidal embedding of $G_{_{ad,}} \times S$.

{\em For the purposes of this section alone so as to remain consistent with the one in \cite{kkms}, we will have the following set of notations}.
\beqa\label{newnots}
H := G_{_{\text{ad}}}\\
H_{_S} := G \times S\\
T_{_S} \subset H_{_S}\\
T_{_S} := T_{_{\text{ad}}} \times S\\
T_{_K} := T_{_{\text{ad}}} \times \spec K
\eeqa
i.e. $H_{_S}$ is a split {\em adjoint} semisimple group scheme over $S$, $T_{_S} \subset H_{_S}$ a fixed split maximal torus and $G$ will as before stand for the simply connected group. We will denote $U^{\pm}_S:=U^{\pm} \times S$.

Base-changing (\ref{keydiagpolyetale}) by $\spec R \rightarrow \mathbb{A}^1$, by the definition of $\bf Y$ we obtain:
\beqa\label{keydiagetale}
\xymatrix{
T_{_{K}} \ar[r] \ar[d]_{p} &
 {{\bf Y}^{^{\text{aff}}}} \ar[d]^{p} \\
\spec K \ar[r] &  \spec R
}
\eeqa
We continue to denote by $Y_{_w}$ the base-change.
The orbit space $H_{_S}/T_{_S}$ exists as an affine scheme over $S$ and $H_{_S}$ via the quotient map $\pi:H_{_S} \to H_{_S}/T_{_S}$ is a locally free principal $T_{_S}$-bundle over $H_{_S}/T_{_S}$.
We now consider the associated fibre bundle:
\beqa\label{normalplusmodif}
\xymatrix{
H_{_S} \ar[dr]_{\text{fibre}=T} \ar@{^{(}->}[rr]& &H_{_S}\times^{^{T_{_S}}} Y_{_w} \ar[dl]^{\text{fibre}=Y_{_w}} \\
& H_{_S}/T_{_S} &
}
\eeqa
Note that $T_\eta = Y_{w,\eta}$ and hence $H_\eta = 
(H_S \times^{^{T_S}} Y_{_w})_{_{\eta}}$. Let us denote by 
\beqa
\ol{Z}_w := H_S \times^{^{T_S}} Y_{_w}
\eeqa
and observe that $U_{_S}^- \times Y_{_w} \times U_{_S} = Z_{_w}$ is an open subset of $\ol{Z}_w$. In fact, over $U_{_S}^- \times U_{_S} \subset H_{_S}/T_{_S}$,  the quotient map $\pi:H_S \ra H_S/T_S$ is trivial.

Analogous to Mumford's definition \cite[page 206]{kkms}, we define
\beqa
\ol{H_{_S}} := \bigcup_{_{x \in H(K)}} (H_S \times^{^{T_S}} Y_{_w}).x
\eeqa
where the generic fibre $H_\eta$ is identified in each copy $(H_S \times^{^{T_S}} Y_{_w}).x$ by a translate of $H_\eta$ by right multiplication by $x$ in the Iwahori subgroup. It takes a bit to prove that there is such an action. Note that we have the identification:
\beqa
H_{\eta} = \ol{H_{_{\eta}}}
\eeqa

Consider the inclusion $G_{_{\text{ad}}} \times \GG_m \hra L_{_{\text{poly}}}^{\ltimes}G/Z(G)$. Let ${\bf M} = \ol{G_{_{\text{ad}}} \times \GG_m}$ in ${\bf X}^{^{poly}}$ as a scheme over $\CC$. It is {\em locally of finite type.}

\begin{prop} The $S$-scheme $\ol{H_{_S}}$ as a $\CC$-scheme is isomorphic to $\bf M$.
\end{prop}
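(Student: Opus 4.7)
The plan is to construct the isomorphism by matching local charts on both sides and then verifying that the gluing data agree. Both schemes share a toric backbone: the closure of $T^{\ltimes}_{_{\text{ad}}}$ inside ${\bf M}$ is ${\bf Y}^{^{\text{aff}}}$, while the closure of $T_{_S}$ inside $\ol{H_{_S}}$ is assembled from the base-changes of the toric pieces $Y_{_w}$, and the diagrams \eqref{keydiagpolyetale} and \eqref{keydiagetale} tell us that after base-change along $\spec R \to \mathbb{A}^1$ these toric pieces agree as $\CC$-schemes. Moreover, the generic fibre $H_{\eta} = G_{_{\text{ad}}} \times \spec K$ sits inside both constructions---as the generic fibre of $G_{_{\text{ad}}} \times \GG_m \subset {\bf M}$ and tautologically in $\ol{H_{_S}}$---so one has a canonical identification on a common open dense subscheme, which will anchor the global isomorphism.

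Next, I would match the basic affine charts. On Mumford's side, the trivialization of the $T_{_S}$-bundle $H_{_S} \to H_{_S}/T_{_S}$ over the open $U^- \times U \subset H_{_S}/T_{_S}$ cuts out $Z_{_w} = U^-_{_S} \times Y_{_w} \times U_{_S}$ inside $H_{_S} \times^{T_{_S}} Y_{_w}$. On the Solis side, the polynomial analogue of Proposition~5.3 of \cite{solis} gives a decomposition ${\bf X}^{^{\text{poly}}}_{_0} = \cU \times {\bf Y}^{^{\text{aff}}}_{_0} \times \cU^-$, and one would show that intersecting this ind-piece with the finite-dimensional ${\bf M}$ collapses $\cU = ev^{-1}(U)$ to $U_{_S}$: the higher-order loop directions in $\cU$ are transverse to the closure of the constant loops $G_{_{\text{ad}}} \times \GG_m$, hence contribute nothing to ${\bf M}$. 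This identifies $Z_{_w}$ with ${\bf M} \cap {\bf X}^{^{\text{poly}}}_{_w}$, and the $W^{^{\text{aff}}}$-indexed family of translates $Y_{_w}$ on Mumford's side is matched with the corresponding family of translates ${\bf Y}^{^{\text{aff}}}_{_w}$ on Solis's.

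Finally the global gluing data are compared on the common generic fibre: Mumford glues the copies $(H_{_S} \times^{T_{_S}} Y_{_w}) \cdot x$ by $H(K)$-translates via the right Iwahori action, while Solis glues the ${\bf X}^{^{\text{poly}}}_{_{g,w}}$ via the $L^{\ltimes}_{\text{poly}}G \times L^{\ltimes}_{\text{poly}}G$-action \eqref{compatibilities}. Restricted to ${\bf M}$ and pulled back to $H_{\eta}$, both reduce to the natural $G(K)$-action on $G_{_{\text{ad}},K}$, so the cocycle data agree. The main obstacle, and the step requiring real work, is the normal-form identification $\cU|_{{\bf M} \cap {\bf X}^{^{\text{poly}}}_{_0}} = U_{_S}$: one must show that every point of ${\bf M} \cap {\bf X}^{^{\text{poly}}}_{_0}$ admits a unique expression $(u^-, y, u)$ with $u^{\pm}$ having no non-trivial loop content, and then track the compatibility of this normal form under the affine Weyl group translations that relate the various $Y_{_w}$'s. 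Once this chart-level matching is rigorously established, the isomorphism $\ol{H_{_S}} \simeq {\bf M}$ of $\CC$-schemes follows by globalizing the chart identifications.
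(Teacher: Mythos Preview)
The paper does not supply a proof of this proposition: it is stated without argument in a section the authors explicitly describe as a ``sketch'' and ``outline'' of the relationship between Mumford's and Solis's constructions, and the text moves directly from the statement to its consequences (``In particular, $\ol{H_{_S}}$ is a regular scheme \ldots''). Your proposal is therefore not recreating an existing argument but filling a gap the paper leaves open.

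That said, your chart-by-chart strategy is the natural one and is consonant with the scaffolding the paper sets up: both $\ol{H_{_S}}$ and ${\bf M}$ are assembled from $W^{^{\text{aff}}}$-indexed toric cells thickened by unipotent factors, the diagram \eqref{keydiagetale} is placed precisely to link the two toric backbones, and the paper's remark that $Z_{_w} = U^-_{_S} \times Y_{_w} \times U_{_S}$ is open in $\ol{Z}_{_w}$ is the hook for your chart identification. The step you single out as the crux---that on ${\bf M} \cap {\bf X}^{^{\text{poly}}}_{_0}$ the pro-unipotent factor $\cU$ collapses to the finite-dimensional $U_{_S}$---is indeed where the real content lies.

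One point deserves care. The paper base-changes the toric pieces $Y_{_w}$ along the localization $\spec R \to \mathbb{A}^1$ (with $R = \CC[z]_{_{(z)}}$) and explicitly says ``we continue to denote by $Y_{_w}$ the base-change''. As $\CC$-schemes, these localized pieces are not literally the original ${\bf Y}^{^{\text{aff}}}_{_w} \simeq \mathbb{A}^{\ell+1}$. You should make explicit whether the projection $G_{_{\text{ad}}} \times \GG_m \to \GG_m$ extends to a morphism ${\bf M} \to \mathbb{A}^1$, and whether the comparison you intend is really with ${\bf M}$ itself or with ${\bf M} \times_{_{\mathbb{A}^1}} S$; the paper is not fully explicit on this, and resolving it is a prerequisite for making your chart matching rigorous.
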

In particular, $\ol{H_{_S}}$ is a regular scheme over $\CC$ such that the closed fibre $\ol{H_{o}}$ is a complete scheme over $\CC$ which is a union of smooth components meeting at normal crossings. The embedding $H_{_S} \subset \ol{H_{_S}}$ is a toroidal embedding without self-intersection. Finally, the strata of $\ol{H_{_S}} - H_{_S}$ are precisely the parahoric subgroups of $H(K)$. This bijection between the strata and the parahoric subgroups extends to an isomorphism of the graph of the embedding $H_{_S} \subset \ol{H_{_S}}$ with the Bruhat-Tits building of $H$ over $S$. 

By the methods in this note we immediately deduce the existence of a ``wonderful" Bruhat-Tits group scheme $\mathcal G_{_{\bf M}}^{^{\varpi}}$ on $\bf M$ which behaves naturally with respect to the strata.
\begin{rem} A difference between Mumford's construction and ours is that in Mumford's construction the fibre over the closed point in $S$ is a reducible divisor but not necessarily with simple normal crossing singularities. In fact, the components associated to the parahorics which are not hyperspecial come with multiplicity being the coefficient $c_{_\alpha}$ of the associated simple root in the expression of the highest root. So a Kawamata lemma is not immediately applicable in that situation.
\end{rem}
\begin{rem}\label{prelative} Mumford's construction works over algebraically closed fields of any characteristic while we assumed the characteristic to be $0$. We did this to make the construction in the loop group situation following \cite{solis}. On the other hand, it seems  that with a bit more work the whole construction of $\mathcal G_{_{\bf M}}^{^{\varpi}}$ will go through for characteristics $p$ such that $p$ is coprime to the $d_{_\alpha}$'s in \eqref{dalpha}. \end{rem}

\section{Appendix on parabolic and equivariant bundles}\label{parabstuff} In this section we recall and summarize results on parabolic bundles and equivariant bundles on Kawamata covers. These play a central role in the constructions of the Bruhat-Tits group schemes made above. Consider a pair $(X,D)$, where $X$ is a smooth quasi-projective variety and $D = \sum_{_{j = 0}}^{^{\ell}} D_{_j}$ is a {\em reduced normal crossing divisor}  with non-singular components $D_{_j}$  intersecting each other transversely. The basic  examples we have in mind in the note are discrete valuation rings with its closed point, the wonderful compactification $\bf X$ with its boundary divisors or affine toric varieties.

Let $E$ be a locally free sheaf on $X$.  Let $n_{_j}, j = 0 \ldots, \ell$ be positive integers attached to the components $D_{_j}$. Let $\xi$ be a generic point of $D$. 

Let $E_{_\xi} := E \otimes_{_{\mathcal O_{_{X}}}} \mathcal O_{_{X,\xi}}$ and $\bar{E}_{_{\xi}} := E_{_{\xi}}/\mathfrak m_{_{\xi}} E_{_{\xi}}$, $\mathfrak m_{_{\xi}}$ the maximal ideal of $\mathcal O_{_{X,\xi}}$. 
\begin{defi}\label{sestype} A parabolic structure on $E$ consists of the following data:
\begin{itemize}
\item A flag $\bar{E}_{_{\xi}} = F^{^1}\bar{E}_{_{\xi}} \supset \ldots \supset F^{r{_{_j}}}\bar{E}_{_{\xi}}$, at the generic point $\xi$ of each of the components $D_{_j}$ of $D$,
\item Weights $d_{_s}/n_{_j}$, with $0 \leq d_{_s} < n_{_j}$ attached to $F^{^s}\bar{E}_{_{\xi}} $  such  that  $d_1 < \cdots < d_{r_j}$
\end{itemize}
\end{defi}
By saturating the flag datum on each of the divisors we get 
for each component $D_{_j}$ a filtration:
\beqa
E_{_{D_{_j}}} = F^{^1}_{_{j}} \supset \ldots \supset F^{r{_{_j}}}_{_{j}}
\eeqa
of sub-sheaves on $D_{_j}$. Define the coherent subsheaf $\mathcal F^{s}_{_j}$, where $0 \leq j \leq \ell$ and $1 \leq s \leq r_{_j}$, of $E$ by:
\beqa
0 \to \mathcal F^{s}_{_j} \to E \to E_{_{D_{_j}}}/F^{s}_{_j} \to 0
\eeqa
the last map is by restriction to the divisors. In \cite{biswas} it is shown that there is a Kawamata covering of $p:(Y,\tilde{D}) \to (X,D)$  with suitable ramification data and Galois group $\Gamma$. It is also shown that there is a $\Gamma$-equivariant vector bundle $V$ on $Y$ such that the invariant direct image $p^{\Gamma}_{_*}(V) = E$ which also recovers the  filtrations $\mathcal F^{s}_{_j}$.

For the sake of completeness, we give a self-contained {\em ad hoc} argument for this construction which is more in the spirit of the present note. The data given in \eqref{sestype} is as in \cite{ms}   which deals with points on curves. {\em Note that in our setting we have rational weights}. Under these conditions, the aim is to set up an equivalence 
\beqa\label{bisses}
p^{\Gamma}_{_*}: \{\Gamma-\text{equivariant~bundles~on}~Y\} \to \{\text{parabolic~bundles~on}~X\}
\eeqa
and as the notation suggests, this is achieved by taking invariant direct images. Since $p:Y \to X$ is finite and flat, if $V$ is locally free on $Y$ then so is $p^{\Gamma}_{_*}(V)$. An equivariant bundle $V$ on $Y$ is defined in an analytic neighbourhood of a generic point $\zeta$ of a component by a representation of the isotropy group $\Gamma_{_{\zeta}}$ and locally (in the analytic sense) the action of $\Gamma_{_{\zeta}}$ is the product action. This is called the local type in \cite{pibundles} or \cite{ms}. By appealing to the $1$-dimensional case, we get canonical parabolic structures on $E$ at the generic points $\xi$ of the components $D_{_j}$ in $X$. 

This construction is an equivalence. Given a parabolic bundle $E$ on $(X,D)$, to get the $\Gamma$-equivariant bundle $V$ on $Y$, such that $p^{\Gamma}_{_*}(V) = E$, we proceed as follows. If we did know the existence of such a $V$ then we can consider the inclusion $p^*(E) \subset V$. Taking its dual (and since $V/p^*(E)$ is torsion, taking duals is an inclusion):
\beqa
V^* \hra  (p^*(E))^* = p^*(E^*)
\eeqa
By the $1$-dimensional case (obtained by restricting to the height $1$-primes at the generic points) we see that the quotient $T_{_\zeta} := p^*(E^*)_{_\zeta}/V_{_\zeta}^*$ is a torsion $\mathcal O_{_{Y,\zeta}}$-module and $T_{_\zeta}$ is completely determined by the parabolic structure on $E$. 

The parabolic structure on $E$ therefore determines canonical quotients:
\beqa\label{quotients}
p^*(E^*)_{_\zeta} \to T_{_\zeta}
\eeqa
for each generic point $\zeta$ of components in $Y$ above the components $D_{_j}$ in $X$.

The discussion above suggest how one would construct such a $V$;  we begin with these quotients \eqref{quotients}. Then we observe that there is a maximal coherent subsheaf $V' \subset p^*(E^*)$ such that 
\beqa
p^*(E^*)_{_\zeta}/V_{_\zeta}' = T_{_\zeta}.
\eeqa
Away from $p^{^{-1}}(D)$ in $Y$, the inclusion $V' \hra p^*(E^*)$ is an isomorphism. Dualizing again, we get an inclusion $p^*(E) \hra (V')^{^*}$ which is an isomorphism away from $p^{^{-1}}(D)$. Set $W =  (V')^{^*}$. Since $W$ is the dual of a coherent sheaf, it is reflexive. It is not hard to check that $W$ coincides with the vector bundle $V$ that Biswas constructs in codimension $2$ and hence everywhere by \cite[Proposition 1.6, page 126]{hartshorne}. This gives us the desired equivalence \eqref{bisses}.
\subsubsection{The group scheme situation}\label{grstuff} Let $(X,D)$ be as above. Let $\xi$ be the generic point of a component $D_{_j}$ of $D$ and let $A := \mathcal O_{_{X,\xi}}$ and $K$ be the quotient field of $A$. Let $\mathcal G_{_{\theta}}$ be a Bruhat-Tits group scheme on $\spec(A)$ associated to a vertex $\theta$ of the Weyl alcove. {\em We always assume that these group scheme are generically split}.

Let $B = \mathcal O_{_{Y, \zeta}}$ where $\zeta$ is the generic point of a component of $Y$ above $D_{_j}$ and let $L$ be the quotient field of $B$. We assume that the local ramification data for the Kawamata covering has  numbers $d_{_\alpha}$ \eqref{dalpha}.  Let $\Gamma_{_\zeta}$ be the stabilizer of $\Gamma$ at $\zeta \in Y$.

The results of \cite[Proposition 5.1.2 and Remark 2.3.3]{base} show that there exists an equivariant group scheme $\mathcal H_{_B}$ on $\spec(B)$ with fiber isomorphic to the simple connected group $G$ and such that $\text{Res}_{_{B/A}}(\mathcal H_{_B})^{^{\Gamma_{_{\zeta}}}} \simeq \mathcal G_{_{\theta}}$.

Suppose that we have a group scheme $\mathcal G_{_{X'}}$ on an open $X' \subset X$ which includes all the height $1$ primes coming from the divisors $D_{_j}$'s with the following properties:
\begin{itemize}
\item Away from the divisor $D \subset X$, $\mathcal G$ is the constant group scheme with fiber $G$.
\item The restrictions $\mathcal G\mid_{_{\spec(A)}}$ at the generic points $\xi$ are isomorphic to the Bruhat-Tits  group scheme $\mathcal G_{_\theta}$ for varying $\xi$ and $\theta$.
\end{itemize}  
In other words, $\mathcal G_{_{X'}}$ is obtained by a gluing of the constant group schemes with $\mathcal G_{_\theta}$'s along $\spec(K)$ by an automorphism of constant group scheme $G_{_K}$.

Now consider the inverse image of the constant group scheme $p^{^*}(G_{_{X - D}}) \simeq G \times p^*(X-D)$. Then using the gluing on $X'$, we can glue the constant group scheme $p^{^*}(G_{_{X - D}})$ with the local group schemes $\mathcal H_{_B}$ for each generic point $\zeta$ to obtain a group scheme $\mathcal H_{_{Y'}}$ on $Y' = p^{^{-1}}(X')$ such that:
\beqa
\text{Res}_{_{Y'/X'}}(\mathcal H_{_{Y'}})^{^{\Gamma}} \simeq \mathcal G_{_{X'}}.
\eeqa
\subsubsection {Kawamata Coverings}\label{kawa}
Let $X$ be a smooth quasi-projective variety and let $D = \sum_{_{i=0}}^{^\ell} D_i$
be the decomposition of the simple or reduced normal crossing divisor
$D$ into its smooth components (intersecting transversally).
The ``Covering Lemma'' of Y. Kawamata
(see \cite[Lemma 2.5, page 56]{vieweg}) says
that, given positive integers $n_{_0}, \ldots, n_{_\ell}$, there is a connected smooth quasi-projective
variety $Z$ over $\mathbb C$ and a Galois covering morphism
\beqa\label{kawamatacm}
\kappa:Z \to  X 
\eeqa
such that the reduced divisor $\kappa^{^*}{D}:= \,({\kappa}^{^*}D)_{_{\text{red}}}$
is a normal crossing divisor on $Z$ and furthermore,
${\kappa}^{^*}D_{_i}= n_{_i}.({\kappa}^{^*}D_{i})_{_{\text{red}}}$. Let $\Gamma$ denote the Galois group
for the covering map $\kappa$.

The isotropy group of any point $z \in Z$, for the
action of $\Gamma$ on $Z$, will be denoted by ${\Gamma}_{_z}$.  It is easy to see that the stabilizer at generic points of the irreducible components of $(\kappa^{^*}D_i)_{_{\text{red}}}$ are cyclic of order $n_{_i}$.

\end{document}